\documentclass[11pt]{amsart}

\usepackage[T1]{fontenc}
\usepackage[utf8]{inputenc}
\usepackage{lmodern}
\usepackage{microtype}
\usepackage{amsmath,amssymb,amsthm,mathtools}
\usepackage{mathrsfs}
\usepackage{graphicx}
\usepackage{float}
\usepackage{fancyhdr}
\usepackage{multirow}
\usepackage{multicol}
\usepackage{enumitem}
\usepackage{empheq}
\usepackage{cases}
\usepackage[all]{xy}
\usepackage[draft=false,setpagesize=false,pdfstartview=FitH,
colorlinks=true,citecolor=blue]{hyperref}

\numberwithin{equation}{section}

\newtheorem{theorem}{Theorem}[section]
\newtheorem{corollary}[theorem]{Corollary}
\newtheorem{proposition}[theorem]{Proposition}
\newtheorem{lemma}[theorem]{Lemma}

\theoremstyle{definition}

\theoremstyle{remark}
\newtheorem{remark}[theorem]{Remark}

\newcommand{\calW}{\mathcal W}
\newcommand{\Lam}{\Lambda}
\newcommand{\dd}{\,d}

\newcommand{\Ric}{\operatorname{Ric}}
\newcommand{\Id}{\operatorname{Id}}

\newcommand{\ip}[2]{\left\langle #1,#2\right\rangle}
\newcommand{\norm}[1]{\left|#1\right|}
\newcommand{\Rm}{\operatorname{Rm}}
\newcommand{\CP}{\mathbb{CP}}

\newcommand\restr[2]{{
  \left.\kern-\nulldelimiterspace
  #1
  \vphantom{\big|}
  \right|_{#2} 
  }}

\title[Conformal K\"ahler Rigidity of Einstein Four-Manifolds]{Conformal K\"ahler Rigidity of Einstein Four-Manifolds}

\author[G. Catino]{Giovanni Catino}
\address{Dipartimento di Matematica, Politecnico di Milano, Italy}
\email{giovanni.catino@polimi.it}

\author[D. Dameno]{Davide Dameno}
\address{Dipartimento di Matematica ``Giuseppe Peano'', Universit\`a degli Studi di Torino, Italy}
\email{davide.dameno@unito.it}

\subjclass[2020]{Primary 53C25, 53C24; Secondary 53C18, 53C21, 53C55}
\keywords{Einstein four-manifolds, harmonic self-dual Weyl tensor, conformally K\"ahler metrics, gravitational instantons, K\"ahler--Einstein surfaces}

\begin{document}

\begin{abstract}
For a compact, connected, oriented Einstein four-manifold, we prove that, if the largest eigenvalue of the self-dual Weyl curvature $W^+$ is everywhere simple, then, after at worst passing to a double cover, the metric is conformally K\"ahler with positive scalar curvature; more generally, this result holds for metrics with harmonic self-dual Weyl curvature. For Einstein metrics satisfying a uniform simplicity hypothesis on the largest eigenvalue, we further prove that either $W^+\equiv 0$, or $W^+$ nowhere vanishes and the previous conclusion holds. We also obtain extensions to complete Ricci-flat four-manifolds and an optimal pinching theorem for the holomorphic sectional curvature of compact K\"ahler--Einstein surfaces. The proof combines LeBrun's conformal normalization with the resulting weighted divergence equation and new first-order identities. A zero-capacity argument allows this method to be used across the zero set of $W^+$ and at infinity in the noncompact case. Finally, K3 surfaces and multicentered Gibbons--Hawking gravitational instantons show that our assumptions are sharp. 
\end{abstract}

\maketitle

\section{Introduction}

Let $(M,h)$ be an oriented \emph{Einstein} manifold of dimension four, i.e., its Ricci tensor $\Ric_h$ is proportional to the metric $h$: in this case, the curvature information, apart from the value of the scalar curvature, is all encoded in the \emph{self-dual} and \emph{anti-self-dual Weyl tensors}, which can be regarded as linear, symmetric, trace-free
endomorphisms 
\[
W_h^{\pm}:\Lam^\pm\longrightarrow\Lam^\pm
\]
of the bundles $\Lam^\pm$, arising as eigenspaces of the Hodge operator acting on the bundle $\Lam^2$ of 2-forms. 
Throughout the paper, we assume that every manifold we consider is connected. 

We focus on the symmetric operator $W_h^+$ and we denote its
eigenvalues by
\[
\alpha_h\geq\beta_h\geq\gamma_h, \quad \alpha_h+\beta_h+\gamma_h=0.
\]
The main aim of this paper is to show that a rather weak assumption on the two largest eigenvalues $\alpha_h$ and $\beta_h$ actually implies strong rigidity for $(M,h)$, connecting the
conformal properties of $h$ with the existence of K\"{a}hler structures on $M$. More precisely, we show that, after possibly passing to a double cover of $M$ (see Section \ref{sec:confchange} for a more complete explanation), 
\[
\alpha_h>\beta_h \quad\text{on }M \quad\Longrightarrow \quad
\beta_h=\gamma_h= -\dfrac{1}{2}\alpha_h \quad\text{on }M.
\]
A well-known result due
to Derdzi\'nski \cite[Proposition 5]{Derdzinski} implies that the conformal metric $g=\alpha_h^{2/3}h$ is 
\emph{K\"{a}hler with positive scalar curvature} and, as a consequence, $h$ has to be a Hermitian Einstein metric with positive scalar curvature.  

More precisely, we prove the following.
\begin{theorem}\label{thm:Einstein-simple-top}
Let $(M,h)$ be a compact oriented Einstein four-manifold. Assume that
\[
\alpha_h>\beta_h 
\quad\text{on }M.
\]
Then, after at worst passing to a double cover of $M$, $h$ is conformal to a K\"ahler metric with positive scalar curvature. In particular, $h$ is a Hermitian Einstein metric with positive scalar curvature.
\end{theorem}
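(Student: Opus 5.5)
Since $\alpha_h>\beta_h$ everywhere and $\alpha_h+\beta_h+\gamma_h=0$, we have $\alpha_h>0$ on $M$. In particular $W^+_h$ vanishes nowhere, and the $\alpha_h$-eigenspace is a real line subbundle $L\subset\Lam^+$. We first pass to a double cover on which $L$ is trivial. There $L$ has a unit section $\omega$, and $\sqrt2\,\omega$ defines an $h$-orthogonal almost complex structure $J$ compatible with the orientation. Following LeBrun's conformal normalization, we set $g=\alpha_h^{2/3}h$. Because $h$ is Einstein, the second Bianchi identity gives $\delta W^+_h=0$. Since $\delta W^+$ is conformally covariant with weight, $g$ satisfies the weighted equation $\delta(\alpha_h^{-1/3}W^+_g)=0$. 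The normalization also makes the largest eigenvalue of $W^+_g$ a multiple of $s_g$ up to controlled terms. In addition, the rescaled form $\omega_g=\alpha_h^{1/3}\sqrt2\,\omega$ has constant length $\sqrt2$ for $g$.

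The core step is a first-order identity relating $\nabla^g\omega_g$ to the gap $\alpha-\beta$, $\alpha-\gamma$. Contracting the divergence equation with $\omega_g\otimes\omega_g$, and with the components along the two other eigendirections, should express $d\omega_g$ and $\delta\omega_g$ in terms of $\nabla\log\alpha$ and the off-diagonal part $W^+(\omega_g,\cdot)$. Writing $W^+_g=\alpha\,\omega\otimes\omega+\beta\,\eta\otimes\eta+\gamma\,\theta\otimes\theta$ locally, differentiating, and using $\alpha\neq\beta,\gamma$ gives the connection one-forms of $\Lam^+$ (which encode $\nabla\omega$) as explicit multiples of $(\alpha-\beta)^{-1}$ and $(\alpha-\gamma)^{-1}$ times derivatives of eigenvalues. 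I would aim for a pointwise Weitzenböck/Bochner-type formula of the form
\[
\Delta_g \alpha + c\,\alpha(\alpha-\beta)(\alpha-\gamma)\cdots = |\nabla\omega_g|^2\text{-terms}+\text{divergence},
\]
or, more cleanly, a weighted divergence identity
\[
\operatorname{div}_g\bigl(f\,X\bigr)= f\,Q\ge 0 .
\]
Here $X$ is built from $\nabla\omega_g$, and $Q$ is a nonnegative quadratic expression that vanishes exactly when $\nabla^g\omega_g=0$, equivalently when $\beta=\gamma$.

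Integrating over the compact (cover of the) manifold kills the divergence, so $Q\equiv0$. Hence $\omega_g$ is $g$-parallel, $g$ is Kähler, and $\beta_h=\gamma_h$. For a Kähler metric, $W^+_g$ has eigenvalues $(s_g/6,-s_g/12,-s_g/12)$. Since $\alpha_g>0$, this forces $s_g>0$. This is consistent with Derdziński's Proposition 5, which also provides the converse direction and shows that $h=\alpha_h^{-2/3}g$ is Hermitian Einstein.

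The main obstacle is the sign in the integral identity. The divergence equation alone gives an identity with indefinite terms, and one must choose the weight $f$, a power of $\alpha$ or of $\alpha-\beta$, so that the cross terms between $d\alpha$ and the connection forms combine into a perfect square. I would first try $f=\alpha^{p}$ for a suitable power $p$ (around $p=-1/3$, matching LeBrun's normalization) and test positivity on the model where the second fundamental data of $L$ are arbitrary. Simplicity of $\alpha$, rather than of all eigenvalues, is only enough if the $\beta,\gamma$ eigenplane is treated as a whole; this is exactly why one should work with $\omega$ alone and never with $\eta,\theta$ individually.
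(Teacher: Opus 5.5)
Your setup (double cover trivializing $L$, $g=\alpha_h^{2/3}h$, $\delta_g(\alpha_h^{-1/3}W_g^+)=0$) and your endgame (a parallel eigenform, hence $g$ K\"ahler with $\alpha=s_g/6>0$) agree with the paper. The step that carries the whole theorem, however, is not there: an integral identity with nonnegative integrand. You flag it yourself as ``the main obstacle'' and leave it to trial.

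The mechanism you sketch for it would also not work as stated. The divergence equation does not express the connection forms through eigenvalue derivatives; perturbation theory gives them via off-diagonal entries of $\nabla W^+$ divided by the gaps. Write $\nabla\omega_1=a\otimes\omega_2-c\otimes\omega_3$. The $\omega_1$-component of $\delta A=0$ for a diagonalized $A$ is then just four scalar equations, $d\mu_1=(\mu_1-\mu_2)\iota_{a^\sharp}\omega_3+(\mu_1-\mu_3)\iota_{c^\sharp}\omega_2$, in the eight unknowns $(a,c)$. The other components only bring in $b$, $d\mu_2$, $d\mu_3$.

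Nor is the weight something to tune. Once $g=\alpha_h^{2/3}h$, conformal covariance forces $f=\alpha_h^{-1/3}$. Its one essential effect is that the top eigenvalue of $\mathcal W^+=fW_g^+$ is $f\alpha\equiv1$. Two smaller corrections: the normalization does not make $\alpha$ a multiple of $s_g$, which holds only a posteriori. And the form of constant $g$-length $\sqrt2$ is $\sqrt2\,\alpha_h^{2/3}\omega$, not $\sqrt2\,\alpha_h^{1/3}\omega$.

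Two ingredients are missing.

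First, contract the weighted Weitzenb\"ock formula for $fW^+$ with $\omega_1\otimes\omega_1$, and combine it with $(d+d^*)^2\omega_1=\nabla^*\nabla\omega_1-2W^+(\omega_1)+\frac s3\omega_1$. The scalar curvature cancels. Integrating, with $\int\langle\omega_1,(d+d^*)^2\omega_1\rangle=2\int|d\omega_1|^2$, gives $\int_M[\mathcal P+8\alpha(\rho+\frac12)^2]\,d\mu_g=0$, where $\mathcal P=3|d\omega_1|^2+\frac12|\nabla\omega_1|^2-\frac2\alpha W^+(\nabla_e\omega_1,\nabla^e\omega_1)$. Note the zeroth-order term, which an integrand ``quadratic in $\nabla\omega$'' does not account for. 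Since $-\frac2\alpha W^+(\nabla_e\omega_1,\nabla^e\omega_1)=-4\rho|a|^2+4(1+\rho)|c|^2$, $\mathcal P$ is an indefinite quadratic form in $(a,c)$ once $\rho$ is near $1$. So no pointwise estimate treating $a$ and $c$ as independent reaches all $\rho<1$.

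Second, because $d\mu_1=0$, the first-order identity above reduces to $(1-\rho)\iota_{a^\sharp}\omega_3+(2+\rho)\iota_{c^\sharp}\omega_2=0$. That is, $c_2=ka_1$, $c_1=-ka_2$, $c_4=ka_3$, $c_3=-ka_4$ with $k=(1-\rho)/(2+\rho)$. Substituting gives exactly $\mathcal P=\frac{3(1-\rho)(2+\rho)}{2\rho^2+2\rho+5}|\nabla\omega_1|^2$ on the open dense regular spectral set, where eigenforms $\omega_2,\omega_3$ are used after all. This extends everywhere by continuity. The coefficient is positive precisely because $\rho<1$, which is where $\alpha_h>\beta_h$ is used.

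Finally, $s_h>0$ needs its own line. For example, $\int_Ms_h\,d\mu_h=\int_M(\frac23\alpha_h^{-8/3}|\nabla\alpha_h|_g^2+s_g\alpha_h^{-2/3})\,d\mu_g>0$, together with constancy of $s_h$.
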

This result gains importance when viewed in the context of classifying compact Einstein four-manifolds with positive scalar curvature: indeed, LeBrun \cite{LeBrun2012,LeBrun2013} proved that, in the Hermitian, simply connected setting, $(M,h)$ is either K\"ahler--Einstein or
isometric to the first del Pezzo surface
$\mathbb{CP}^2\#\overline{\mathbb{CP}^2}$ endowed with the Page
metric \cite{Page}, or to the second del Pezzo surface
$\mathbb{CP}^2\#2\overline{\mathbb{CP}^2}$ endowed with the
Chen--LeBrun--Weber metric \cite{ChenLeBrunWeber}. In the
		K\"{a}hler--Einstein case, by the classification and existence results of Tian \cite{Tian} and
		Odaka--Spotti--Sun \cite{OdakaSpottiSun}, the underlying complex surface $(M,J)$ is biholomorphic to
		$\mathbb{CP}^2$, $\mathbb{CP}^1\times\mathbb{CP}^1$ or
		$\mathbb{CP}^2\#k\overline{\mathbb{CP}^2}$, $3\leq k\leq8$, with
		the blown-up points in general position. As a consequence, Theorem \ref{thm:Einstein-simple-top} provides the classification of all compact, simply connected Einstein four-manifolds $(M,h)$ with positive scalar curvature and simple largest eigenvalue of $W_h^+$ (see Corollary \ref{cor:einstein-classification} for a more general result).

        We point out that, in this context, the proof of Theorem \ref{thm:Einstein-simple-top} only relies on $h$ having \emph{harmonic self-dual Weyl curvature}, i.e. 
        \[
        \delta_hW_h^+\equiv 0,
        \]
        where $\delta_h$ denotes the divergence operator: this condition is satisfied by all Einstein four-manifolds \cite{Besse}. Theorem \ref{thm:Einstein-simple-top} is a direct consequence of the following result.

        \begin{theorem}\label{thm:half-harmonic-simple-top}
Let $(M,h)$ be a compact oriented Riemannian four-manifold with
\[
    \delta_hW_h^+=0\quad\text{on }M.
\]
Assume that
\[
    \alpha_h>\beta_h
    \quad\text{on }M.
\]
Then, after at worst passing to a double cover of $M$, $h$ is conformal to a K\"ahler metric with positive scalar curvature.

Conversely, if $(M,g,J)$ is a compact K\"ahler surface, with respect to the complex orientation, and with $s_g>0$,
then $h=s_g^{-2}g$ satisfies $\delta_hW_h^+=0$ and
$\alpha_h>\beta_h$.
\end{theorem}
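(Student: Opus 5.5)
Since $\alpha_h>\beta_h$ everywhere, $\alpha_h$ is simple, hence smooth and strictly positive. Indeed $\alpha_h>\beta_h\ge\gamma_h$ and the eigenvalues sum to zero, so $\alpha_h>0$. The corresponding unit eigenline in $\Lam^+$ defines a line bundle, which is trivial after passing to at worst a double cover. We then obtain a smooth unit self-dual $2$-form $\omega$ with $W_h^+\omega=\alpha_h\omega$, and $\sqrt2\,\omega$ is an almost-complex structure compatible with $h$. Following LeBrun's conformal normalization, set $g=\alpha_h^{2/3}h$. Since $W^+$ has conformal weight, this normalization makes $\alpha_g$ constant, equal to $1$ up to a constant factor depending on conventions. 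The condition $\delta_hW_h^+=0$ becomes a weighted divergence equation for $W_g^+$:
\[
\delta_g\bigl(\alpha_g^{-1}W_g^+\bigr)=0,\qquad\text{equivalently}\qquad \delta_g(fW^+_g)=0\ \text{for a suitable power } f \text{ of } \alpha_h.
\]
The goal is to show that $\omega_g=\sqrt2\,\alpha_h^{2/3}\omega$, the $g$-unit form, is parallel. Derdzi\'nski's argument then gives that $g$ is K\"ahler, $\alpha_g=s_g/6$ is constant and positive, and $\beta=\gamma$.

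The key step is a Weitzenb\"ock/Bochner-type integral identity. Write $W_g^+=\alpha\,\omega\otimes\omega+(\text{part on }\omega^\perp)$ and differentiate, using that $\omega$ is the top eigenvector. Contracting the divergence equation with $\omega$ gives first-order identities linking $\nabla\omega$ to $(\alpha-\beta)$ and $(\alpha-\gamma)$ and to the derivative of the log-weight. More precisely, $\nabla\omega$ takes values in $\omega^\perp\subset\Lam^+$, and the divergence equation determines a Lee-type $1$-form $\theta$ with $d\omega=\theta\wedge\omega$. I would then build a pointwise identity of the form
\[
\delta_g(\text{some vector field }X)=c_1(\alpha-\beta)|\nabla^-\omega|^2+c_2(\alpha-\gamma)|\nabla^+\omega|^2+(\text{nonnegative terms}),
\]
where $\nabla\omega$ has been split into its two components in $\omega^\perp\otimes T^*M\cong$ the $J$-linear and $J$-antilinear parts. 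Integrating over the compact $M$ kills the left-hand side. Because $\alpha-\beta>0$ and $\alpha-\gamma>0$ everywhere, this forces $\nabla\omega\equiv0$.

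The main obstacle is finding the right vector field $X$, that is, the correct combination of $\alpha$-powers times $\langle\nabla\omega,\cdot\rangle$ terms, so that no indefinite cross terms survive. I would first try $X=f\,\nabla^*(\ldots)$, modelled on the computation of $\delta W^+(\omega)$, with $f$ a power of $\alpha_h$, and tune that exponent so that the mixed terms $\langle d\alpha,\theta\rangle$ cancel. The divergence equation contracted against $\omega\otimes\nabla\omega$ should supply this cancellation.

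For the converse, let $(M,g,J)$ be K\"ahler with $s_g>0$. Then $W_g^+$ has eigenvalues $s/6,-s/12,-s/12$, so $\alpha>\beta$; this is conformally invariant because the conformal factor multiplies all eigenvalues. The identity $\delta_g W_g^+=0$ is equivalent, under conformal change, to $\delta_h(\cdot)=0$ for $h=s_g^{-2}g$. The standard formula $\delta(f W^+)$ under conformal rescaling shows that $h=\sigma^{2}g$ has harmonic $W^+$ exactly when $\delta_g(\sigma W_g^+)=0$. For a K\"ahler metric, $\delta_g W^+_g$ is expressed through $ds$ contracted with $\omega\otimes\omega-\tfrac13\Id$. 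A direct check, using $W^+=\tfrac s4(\omega\otimes\omega)$ with trace-free normalization, shows that $\sigma=s^{-1}$ solves this equation. This is Derdzi\'nski's classical computation, so I would carry it out last.
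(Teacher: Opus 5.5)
You reduce the theorem to a pointwise identity $\delta_gX=c_1(\alpha-\beta)|\cdot|^2+c_2(\alpha-\gamma)|\cdot|^2+(\text{nonnegative})$, but you never produce it. This identity is the whole content of the proof, so there is a genuine gap. You also put the difficulty in the wrong place.

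In the gauge $g=\alpha_h^{2/3}h$ with weight $f=\alpha_h^{-1/3}$, the top eigenvalue of $fW_g^+$ is identically $1$. The natural Bochner identity then contains no $d\alpha$ cross terms at all. It is obtained by contracting the weighted Weitzenb\"ock formula for $fW_g^+$ with $\omega_1\otimes\omega_1$ (where $|\omega_1|_g^2=2$), combining with the Weitzenb\"ock formula for self-dual $2$-forms, and integrating:
\[
\int_M\Bigl[\mathcal P+8\alpha\bigl(\rho+\tfrac12\bigr)^2\Bigr]\dd\mu_g=0,\qquad
\mathcal P=3|d\omega_1|^2+\tfrac12|\nabla\omega_1|^2-\tfrac{2}{\alpha}W^+(\nabla_e\omega_1,\nabla^e\omega_1).
\]
The real obstruction is $\mathcal P$. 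Write $\nabla\omega_1=a\otimes\omega_2-c\otimes\omega_3$ in a local eigenframe. Then $\mathcal P=3|d\omega_1|^2+(1-4\rho)|a|^2+(5+4\rho)|c|^2$. Its $|a|^2$-coefficient is negative for $\rho>1/4$, and it is a genuinely indefinite quadratic form in $(a,c)$ for $\rho$ close to $1$. So the issue is not cancelling $\langle d\alpha,\theta\rangle$ terms by tuning powers of $\alpha$. It is this indefinite gradient term, and your proposal gives no mechanism to control it.

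The missing idea is to use $\delta_g(fW_g^+)=0$ a second time, directly at first order. You allude to such first-order identities but never extract their consequence. Write $fW_g^+=\frac12\sum_r\mu_r\,\omega_r\otimes\omega_r$ in the eigenframe. Then the divergence-free condition gives Derdzi\'nski's identity $d\mu_1=(\mu_1-\mu_2)\iota_{a^\sharp}\omega_3+(\mu_1-\mu_3)\iota_{c^\sharp}\omega_2$. Since $\mu_1\equiv1$, $\mu_2=\rho$, $\mu_3=-(1+\rho)$, this forces $(1-\rho)\iota_{a^\sharp}\omega_3+(2+\rho)\iota_{c^\sharp}\omega_2=0$. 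In other words, $c$ is a fixed isometric rotation of $a$, scaled by $k=(1-\rho)/(2+\rho)$. Substituting gives $\mathcal P=\frac{3(1-\rho)(2+\rho)}{2\rho^2+2\rho+5}|\nabla\omega_1|^2$, whose coefficient is positive exactly when $\rho<1$. Hence $\nabla\omega_1=0$ and $\rho=-\frac12$. This identity is first obtained on the open dense regular spectral set, where the eigenframe exists, and it extends to $M$ by continuity because $\mathcal P$ is globally smooth.

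Separately, your normalization is misstated. Under $g=\alpha_h^{2/3}h$ the eigenvalues scale by $\alpha_h^{-2/3}$, so $\alpha_g=\alpha_h^{1/3}$, which is not constant in general. Only the top eigenvalue of $\alpha_g^{-1}W_g^+$ is constant. Likewise, the final K\"ahler metric need not have constant $s_g=6\alpha_g$. If $\alpha_g$ were constant, the weighted equation would become $\delta_gW_g^+=0$. For a K\"ahler surface that forces constant scalar curvature, which contradicts the converse half of the theorem applied to any K\"ahler surface with nonconstant $s_g>0$.

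Your treatment of the converse is otherwise fine and matches the paper: $s_g^{-1}W_g^+$ is parallel, and then the weighted conformal covariance of $\delta W^+$ applies.
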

Given a Riemannian four-manifold $(M,h)$, on the set $\{W_h^+\neq0\}$ we can define the conformally invariant ratio
\[
    \rho=\frac{\beta_h}{\alpha_h}:
\]
this is a continuous function which satisfies
\[
    -\frac12\leq\rho\leq1,
\]
since $W_h^+$ is trace-free. 
One can immediately observe that
\begin{align*}
    \rho=-\frac12
    &\quad\Longleftrightarrow\quad
    \beta_h=\gamma_h=-\frac12\alpha_h,\\
    \rho=1
    &\quad\Longleftrightarrow\quad
    \alpha_h=\beta_h,\quad \gamma_h=-2\alpha_h.
\end{align*}
A simple computation shows that 
\begin{equation*}
    \frac{\det(W_h^+)}{|W_h^+|^3}
    =
    -\frac{\rho(1+\rho)}
    {2^{3/2}(1+\rho+\rho^2)^{3/2}}:
\end{equation*}
the right-hand side is non-increasing on $[-1/2,1]$ and attains its
minimum at $\rho=1$, i.e.
\[
    \frac{\det(W_h^+)}{|W_h^+|^3}
    =-\frac1{3\sqrt6}
    \quad\Longleftrightarrow\quad
    \alpha_h=\beta_h>0.
\]
With this in mind,
Theorem \ref{thm:half-harmonic-simple-top} can be rephrased as
\begin{equation}\label{eq:intro-rho-implication}
    \rho<1 \quad\text{on }M
    \quad\Longrightarrow\quad
    \rho=-\frac12\quad\text{on }M.
\end{equation}
This result improves upon the previously known results: indeed, in his seminal paper, Derdzi\'nski proved the existence of a conformal K\"ahler metric on $\{W_h^+\neq 0\}$ under the hypothesis $\#\operatorname{spec}(W_h^+)\leq 2$, i.e. $\rho\in\{-1/2,1\}$ \cite{Derdzinski}. More recently, Wu improved this result by assuming that $\det(W_h^+)>0$, i.e. $\rho<0$ \cite{Wu}, and LeBrun showed that it is sufficient to assume 
\[
\dfrac{\det(W_h^+)}{\norm{W_h^+}^3}\geq -\dfrac{5\sqrt{2}}{21\sqrt{21}},
\]
i.e. $\rho\leq 1/4$ \cite{LeBrun2021}. By \eqref{eq:intro-rho-implication} and the previous considerations, Theorem \ref{thm:half-harmonic-simple-top} gives the
following consequence.

\begin{corollary}\label{cor:det-version-hhw}
Let $(M,h)$ be a compact oriented Riemannian four-manifold with
$\delta_hW_h^+=0$. Assume that
\[
    \det(W_h^+)>-\frac1{3\sqrt6}|W_h^+|^3
    \quad\text{on }M.
\]
Then, after at worst passing to a double cover of $M$, $h$ is conformal to a K\"ahler metric with positive scalar curvature. 
\end{corollary}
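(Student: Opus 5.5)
The plan is to reduce the corollary to Theorem \ref{thm:half-harmonic-simple-top}: we show that the determinant hypothesis forces $\alpha_h>\beta_h$ everywhere on $M$, and then the conclusion is exactly that of the theorem.

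First, the hypothesis excludes zeros of $W_h^+$. At a point where $W_h^+=0$ both sides of the inequality vanish, so the strict inequality $0>0$ fails. Hence $W_h^+$ vanishes nowhere. It follows that $\alpha_h>0$ everywhere, since a nonzero trace-free symmetric endomorphism has a positive eigenvalue. The ratio $\rho=\beta_h/\alpha_h$ is therefore defined and continuous on all of $M$, with $-\tfrac12\le\rho\le1$.

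Second, we use the identity recalled above,
\[
\frac{\det(W_h^+)}{|W_h^+|^3}=-\frac{\rho(1+\rho)}{2^{3/2}(1+\rho+\rho^2)^{3/2}}.
\]
The right-hand side is non-increasing on $[-\tfrac12,1]$, and the equivalence stated above shows that the value $-\tfrac1{3\sqrt6}$ is attained only when $\alpha_h=\beta_h>0$, that is, only at $\rho=1$. Consequently the pointwise hypothesis
\[
\det(W_h^+)/|W_h^+|^3>-\tfrac1{3\sqrt6}
\]
is equivalent to $\rho\neq1$, and hence to $\rho<1$. Because $\alpha_h>0$, the condition $\rho<1$ is the same as $\alpha_h>\beta_h$.

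With $\alpha_h>\beta_h$ on $M$ and $\delta_hW_h^+=0$, Theorem \ref{thm:half-harmonic-simple-top} applies directly. It gives that, after at worst passing to a double cover, $h$ is conformal to a Kähler metric with positive scalar curvature.

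The only step requiring care is the characterization of where $-\tfrac1{3\sqrt6}$ is attained. Since the paper has already stated this equivalence, this is not a genuine obstacle. If one wanted to verify it directly, a computation shows that the derivative of the right-hand side vanishes only at $\rho=1$ on $[-\tfrac12,1]$, and evaluating at $\rho=1$ gives $-2/(2^{3/2}3^{3/2})=-1/(3\sqrt6)$.
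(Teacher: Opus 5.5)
Your proposal is correct and follows the paper's proof essentially step for step. The strict inequality excludes zeros of $W_h^+$, and $\det(W_h^+)/|W_h^+|^3$ equals a non-increasing function $\mathcal F(\rho)$ on $[-\tfrac12,1]$ with minimum $\mathcal F(1)=-1/(3\sqrt6)$, so $\rho<1$, i.e.\ $\alpha_h>\beta_h$, and Theorem \ref{thm:half-harmonic-simple-top} applies. One harmless slip in your closing remark: $\mathcal F'(\rho)$ is proportional to $(\rho-1)(\rho+2)(\rho+\tfrac12)$, so it vanishes at both endpoints $\rho=-\tfrac12$ and $\rho=1$, not only at $\rho=1$; in any case, excluding $\rho=1$ needs only the value $\mathcal F(1)=-1/(3\sqrt6)$, not the uniqueness of the minimizer.
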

The proofs build upon LeBrun's conformal method \cite{LeBrun2021}: indeed, we make use of the fact that, up to passing to the double cover of $M$ determined by the eigenspace of $W_h^+$ associated to $\alpha_h$, the smooth, real line subbundle $L\subset \Lam^+$ defined by this eigenspace is trivial and, therefore, we can exploit LeBrun's conformal normalization
\[
g=\alpha_h^{2/3}h.
\]
Together with the weighted version of the well-known Weitzenb\"ock formula for metrics with harmonic self-dual Weyl curvature discovered by Derdzi\'nski, we obtain the crucial integral identity \eqref{eq:domain-identity}. The new ingredient is a system of first-order identities for the eigenvalues of $W_g^+$ on its regular spectral set (Proposition \ref{prop:weighted-derdzinski}); these equations imply \eqref{eq:domain-identity} and therefore force an eigenform $\omega_1$, associated to $\alpha_g$, to be parallel and hence to be the K\"ahler form of $g$.
We point out that this approach comes from merging the ideas of 
Derdzi\'nski \cite{Derdzinski} and LeBrun \cite{LeBrun2021}: indeed, we make use of Derdzi\'nski's formulas for the derivatives of the eigenvalues of $W_h^+$, applied to the weighted self-dual Weyl curvature $\alpha_h^{-1/3}W_g^+$.

For Einstein metrics, we may allow $W_h^+$ to vanish \emph{a priori} by replacing the pointwise simplicity hypothesis with a uniform simplicity assumption on the largest eigenvalue. More precisely, the following results extend Theorem \ref{thm:half-harmonic-simple-top} and Corollary \ref{cor:det-version-hhw}:

\begin{theorem}\label{thm:main}
Let $(M,h)$ be a compact oriented Einstein four-manifold. Assume
that $W_h^+\not\equiv0$ and that, for some constant $\rho_0<1$,
\[
    \beta_h\leq\rho_0\alpha_h
    \quad\text{on }M.
\]
Then, after at worst passing to a double cover of $M$, $h$ is conformal to a K\"ahler metric with positive scalar curvature. In particular, $h$ is a Hermitian Einstein metric with positive scalar curvature.
\end{theorem}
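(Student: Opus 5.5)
The plan is to reduce Theorem \ref{thm:main} to Theorem \ref{thm:half-harmonic-simple-top} by showing that $W_h^+$ has no zeros. Since $h$ is Einstein, $\delta_hW_h^+=0$. Moreover $W_h^+$ satisfies an elliptic system $\Delta W^+ = \frac{s}{2}W^+ - 6\,W^+\circ W^+ + 2|W^+|^2\Id$ (schematically, the Derdzi\'nski/Weitzenb\"ock identity), so by Aronszajn unique continuation and $W_h^+\not\equiv0$, the zero set $Z=\{W_h^+=0\}$ is a closed set with empty interior. In fact it is small: it has Hausdorff codimension at least $2$, and by the Bär/Caffarelli--Friedman type results on nodal sets of elliptic systems it has zero capacity in the relevant sense. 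On $M\setminus Z$ the hypothesis $\beta_h\le\rho_0\alpha_h$ with $\rho_0<1$ gives $\alpha_h>\beta_h$, and in fact $\alpha_h>0$. So the line bundle $L$ of $\alpha_h$-eigenforms is defined on $M\setminus Z$, and $g=\alpha_h^{2/3}h$ makes sense there.

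The next step is to run the argument of Theorem \ref{thm:half-harmonic-simple-top} on $M\setminus Z$. I would use the first-order identities of Proposition \ref{prop:weighted-derdzinski} to derive the weighted divergence identity \eqref{eq:domain-identity}. This identity has the form $\operatorname{div}(X)=Q\ge0$, where $Q$ controls $|\nabla\omega_1|^2$ and $Q=0$ forces $\omega_1$ to be parallel. I would integrate it against cutoff functions $\phi_\varepsilon$ that equal $1$ away from an $\varepsilon$-neighbourhood of $Z$ and satisfy $\int|\nabla\phi_\varepsilon|^{p}\to0$ (zero capacity). The boundary term is $\int\phi_\varepsilon\cdot\ldots$ of $X\cdot\nabla\phi_\varepsilon$. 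To make it vanish in the limit I need a pointwise bound $|X|\lesssim$ something integrable near $Z$. Here the \emph{uniform} pinching $\rho\le\rho_0$ enters: it keeps the eigenvalue gap $\alpha_h-\beta_h\ge(1-\rho_0)\alpha_h\gtrsim|W_h^+|$. Consequently the derivatives of the eigenform $\omega_1$ and of $\log\alpha_h$ are bounded by $C|\nabla W^+|/|W^+|$, where $C$ depends on $\rho_0$. With $|W^+|$ vanishing to finite order, $X$ has at worst an $r^{-1}$-type singularity, which is killed by the capacity cutoff in dimension four. I expect this to be the main obstacle: making the estimate on $X$ near $Z$ precise, using Bär's local structure of zeros, the vanishing order, and the Kato-type inequality.

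Once the boundary terms vanish, $Q\equiv0$ on $M\setminus Z$. By the derivation of Theorem \ref{thm:half-harmonic-simple-top}, this gives $\rho\equiv-\frac12$ there, and (after the double cover trivializing $L$) $\omega_1$ is $g$-parallel, so $g$ is K\"ahler on $M\setminus Z$ with $s_g=c\,\alpha_h^{1/3}$. I would then rule out $Z\neq\emptyset$. Derdzi\'nski's result shows that for a K\"ahler metric conformal to an Einstein metric, $h=s_g^{-2}g$ up to constants, so $|W_h^+|_h$ is a nonzero multiple of $s_g^3$ times a power. Combined with the explicit ODE/analytic structure (Derdzi\'nski: $|W^+|_h^{2/3}$ satisfies a nondegenerate equation, e.g. $|W^+|^{1/3}$ is smooth with nonvanishing behaviour forced by $\rho=-1/2$ and harmonicity), this should show that $\alpha_h$ cannot tend to $0$ at a finite point. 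Alternatively I would appeal directly to the known fact that on an Einstein manifold with degenerate $W^+$, either $W^+\equiv0$ or $W^+$ is nowhere zero. Hence $Z=\emptyset$, and $\alpha_h>\beta_h$ on all of $M$. Theorem \ref{thm:Einstein-simple-top} then gives the conclusion, including that $h$ is Hermitian Einstein with $s>0$.
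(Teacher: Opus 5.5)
Your overall architecture matches the paper's: B\"ar's theorem for the zero set $Z$, the weighted identity on $M\setminus Z$ integrated against cutoffs, and then Derdzi\'nski's Proposition 5(iv) to force $Z=\varnothing$. However, the step you flag as the main obstacle is a genuine gap, and the way you propose to close it does not work as stated.

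You try to kill the boundary term $\int\phi_\varepsilon\langle X,\nabla\phi_\varepsilon\rangle$ with a \emph{pointwise} bound on $X$. This bound comes from $|\nabla\omega_1|\lesssim|\nabla W^+|/|W^+|$ together with the claim that finite vanishing order makes this an $r^{-1}$ singularity, with $r=d(\cdot,Z)$. That inference is not valid. Finite order of vanishing at each point gives no uniform bound of the ratio by $C\,d(\cdot,Z)^{-1}$ near a codimension-two zero set. For instance, the smooth function $x_1^2+x_2^4$ on $\mathbb R^4$ vanishes on $\{x_1=x_2=0\}$, yet along $x_1=x_2^2$ the ratio $|\nabla f|/|f|$ behaves like $d^{-2}$. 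Nor does B\"ar's theorem help here: it gives rectifiability and $\mathcal H^2(Z)<\infty$, not a \L{}ojasiewicz-type estimate with exponent $1$. So as written the boundary term is not controlled.

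The missing idea is that no pointwise control of $X$ is needed. Because $|\omega_1|_g^2=2$ is constant, integrating $\tfrac32\langle\omega_1,(d+d^*)^2\omega_1\rangle$ against $\chi^2$ produces only the cross term $6\int\chi\langle d\chi\wedge\omega_1,d\omega_1\rangle$. This is bounded by $C\int\chi|d\chi|_g|\nabla\omega_1|_g$, and Young's inequality turns it into $\varepsilon\int\chi^2|\nabla\omega_1|^2+C_\varepsilon\int|d\chi|_g^2$.

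The first piece is \emph{absorbed} into the nonnegative left-hand side $\int\chi^2\bigl[\mathcal{Q}_\rho|\nabla\omega_1|^2+8\alpha(\rho+\tfrac12)^2\bigr]$. This works because $\mathcal{Q}_\rho=\frac{3(1-\rho)(2+\rho)}{2\rho^2+2\rho+5}\geq q_0(\rho_0)>0$ on $[-\tfrac12,\rho_0]$. This, rather than an eigenvalue-gap estimate on $\nabla\omega_1$, is where the uniform hypothesis $\rho_0<1$ is really used, since $\mathcal{Q}_\rho$ vanishes at $\rho=1$.

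After absorption you only need $\int|d\chi|_g^2\,d\mu_g\to0$. This follows from three facts:
\begin{itemize}
\item $|d\chi|_g^2\,d\mu_g=\alpha_h^{2/3}|d\chi|_h^2\,d\mu_h$;
\item $\alpha_h\leq C\,d_h(\cdot,Z)$;
\item $Z$ can be covered by balls $B_h(p_i,r_i)$ with $r_i<\delta$ and $\sum r_i^2\leq C$, by $\mathcal H^2(Z)<\infty$.
\end{itemize}
Together these give a bound $C\sum r_i^{8/3}\leq C\delta^{2/3}$. Fatou's lemma then yields the integral identity on $M\setminus Z$. The remainder of your plan is fine: concluding $\rho\equiv-\tfrac12$ and $\nabla\omega_1=0$, then invoking Derdzi\'nski's Proposition 5(iv) to get $Z=\varnothing$.
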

Since $M$ is compact, obviously, if $\alpha_h>\beta_h$ everywhere, the eigenvalues satisfy the hypothesis of Theorem \ref{thm:main}. We can rephrase this result in terms of $\det(W_h^+)$ as well:
\begin{corollary}\label{cor:det-version}
Let $(M,h)$ be a compact oriented Einstein four-manifold. Assume that
$W_h^+\not\equiv0$ and that
\[
    \det(W_h^+)\geq-c|W_h^+|^3
    \quad\text{on }M
\]
for some constant
\[
    0\leq c<\frac1{3\sqrt6}.
\]
Then, after at worst passing to a double cover of $M$, $h$ is conformal to a K\"ahler metric with positive scalar curvature. In particular, $h$ is a Hermitian Einstein metric with positive scalar curvature.
\end{corollary}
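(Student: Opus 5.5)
Corollary \ref{cor:det-version} will follow from Theorem \ref{thm:main} once the determinant bound is turned into a uniform bound $\beta_h\le\rho_0\alpha_h$ with $\rho_0<1$. The conversion goes through the function
\[
f(\rho)=-\frac{\rho(1+\rho)}{2^{3/2}(1+\rho+\rho^2)^{3/2}},\qquad \rho\in[-1/2,1],
\]
which, as computed in the introduction, satisfies $\det(W_h^+)/|W_h^+|^3=f(\rho)$ on $\{W_h^+\neq0\}$. It is non-increasing and attains its minimum $-1/(3\sqrt6)$ only at $\rho=1$.

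First I check that $f$ is strictly decreasing near $\rho=1$, or at least that $f^{-1}([-c,\infty))\subset[-1/2,\rho_0]$ for some $\rho_0<1$. Since $f$ is continuous on the compact interval $[-1/2,1]$ and equals $-1/(3\sqrt6)<-c$ only at $\rho=1$, the set $\{f\ge -c\}$ is closed and misses $\rho=1$. Hence it lies in $[-1/2,\rho_0]$ with $\rho_0:=\max\{\rho: f(\rho)\ge-c\}<1$. (Monotonicity is not even needed; uniqueness of the minimiser suffices, and it was stated earlier.)

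Next I convert the hypothesis pointwise. At points where $W_h^+\neq0$ we have $\alpha_h>0$, since the trace is zero and the operator is nonzero. Dividing $\det(W_h^+)\ge-c|W_h^+|^3$ by $|W_h^+|^3$ gives $f(\rho)\ge-c$, so $\rho\le\rho_0$, that is, $\beta_h\le\rho_0\alpha_h$. At points where $W_h^+=0$ we have $\alpha_h=\beta_h=0$, so the inequality $\beta_h\le\rho_0\alpha_h$ holds trivially.

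Thus $\beta_h\le\rho_0\alpha_h$ holds on all of $M$, and together with $W_h^+\not\equiv0$ and the Einstein condition, Theorem \ref{thm:main} applies and gives the conclusion. There is no real obstacle. The only point needing care is that $\rho_0$ is uniform, which comes from compactness of $[-1/2,1]$ and the strict gap $c<1/(3\sqrt6)$, rather than from compactness of $M$.
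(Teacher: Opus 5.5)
Your proposal is correct and matches the paper's proof: you turn the determinant bound into $\beta_h\le\rho_0\alpha_h$ via $\det(W_h^+)/|W_h^+|^3=\mathcal{F}(\rho)$ on $\{W_h^+\neq0\}$, observe that the inequality is trivial on the zero set, and apply Theorem \ref{thm:main}. The one difference is how $\rho_0$ is chosen. The paper uses strict monotonicity of $\mathcal{F}$ to define $\rho_0$ by $\mathcal{F}(\rho_0)=-c$. You instead take $\rho_0=\max\{\mathcal{F}\ge -c\}$, which is $<1$ because this closed set misses $\rho=1$ (as $\mathcal{F}(1)=-\tfrac{1}{3\sqrt6}<-c$). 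Your route needs even less than uniqueness of the minimiser.
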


Combining Theorem \ref{thm:main} with the classification of anti-self-dual Einstein four-manifolds with positive scalar curvature \cite{FriedrichKurke, HitchinKahler} and the classification of Hermitian Einstein four-manifolds cited above, we can classify all compact Einstein manifolds with positive scalar curvature and uniformly simple largest eigenvalue.

\begin{corollary}\label{cor:einstein-classification}
Let $(M,h)$ be a compact simply connected oriented Einstein
four-manifold with positive scalar curvature. Assume that 
\[
    \beta_h\leq\rho_0\alpha_h
    \quad\text{on }M
\]
for some $\rho_0<1$. Then, one of the following holds:
\begin{itemize}
\item $(M,h)$ is, up to homothety and isometry, either $\mathbb{S}^4$ or $\overline{\mathbb{CP}^2}$, endowed with their standard metrics;
\item $h$ is K\"ahler--Einstein, and the underlying complex surface is
biholomorphic to
\[
    \mathbb{CP}^2,
    \quad
    \mathbb{CP}^1\times\mathbb{CP}^1,
    \quad\text{or}\quad
    \mathbb{CP}^2\#k\overline{\mathbb{CP}^2},
    \quad 3\leq k\leq8,
\]
with the blown-up points in general position;
\item $h$ is Hermitian non-K\"ahler and is, up to homothety and isometry, the Page
metric on $\mathbb{CP}^2\#\overline{\mathbb{CP}^2}$ or the
Chen--LeBrun--Weber metric on
$\mathbb{CP}^2\#2\overline{\mathbb{CP}^2}$.
\end{itemize}
\end{corollary}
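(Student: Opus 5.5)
We split according to whether $W_h^+$ vanishes identically, and then invoke Theorem \ref{thm:main} together with the known classification results.

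\emph{Case 1: $W_h^+\equiv0$.} Then $h$ is an anti-self-dual Einstein metric with positive scalar curvature on a compact simply connected manifold. The classification of Hitchin and Friedrich--Kurke \cite{FriedrichKurke, HitchinKahler} shows that $(M,h)$ is, up to homothety and isometry, $\mathbb{S}^4$ or $\overline{\mathbb{CP}^2}$ with their standard metrics. The orientation reversal on $\mathbb{CP}^2$ appears because the Fubini--Study metric is self-dual with respect to the complex orientation; with our convention $W_h^+=0$ we need the opposite orientation. This gives the first alternative.

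\emph{Case 2: $W_h^+\not\equiv0$.} The hypothesis $\beta_h\le\rho_0\alpha_h$ with $\rho_0<1$ is exactly that of Theorem \ref{thm:main}. Hence, after at worst passing to a double cover, $h$ is conformal to a Kähler metric with positive scalar curvature. Since $M$ is simply connected, every double cover is disconnected, i.e. trivial. More concretely, the line bundle $L\subset\Lam^+$ determined by the $\alpha_h$-eigenspace is trivial because $H^1(M;\mathbb Z_2)=0$, so no cover is needed and $h$ is Hermitian Einstein on $M$ itself.

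Two sub-cases remain.
\begin{itemize}
\item If $h$ is Kähler--Einstein, then $(M,J)$ is a compact complex surface with $c_1>0$ carrying a Kähler--Einstein metric, i.e. a del Pezzo surface. By Tian \cite{Tian} and Odaka--Spotti--Sun \cite{OdakaSpottiSun}, it is $\mathbb{CP}^2$, $\mathbb{CP}^1\times\mathbb{CP}^1$, or $\mathbb{CP}^2\#k\overline{\mathbb{CP}^2}$ with $3\le k\le 8$ and the points in general position. The cases $k=1,2$ are excluded by Matsushima's obstruction, which is built into Tian's theorem.
\item If $h$ is Hermitian but not Kähler, LeBrun's classification \cite{LeBrun2012,LeBrun2013} of compact Hermitian non-Kähler Einstein four-manifolds with positive scalar curvature gives, up to homothety and isometry, the Page metric on $\mathbb{CP}^2\#\overline{\mathbb{CP}^2}$ or the Chen--LeBrun--Weber metric on $\mathbb{CP}^2\#2\overline{\mathbb{CP}^2}$.
\end{itemize}

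The only delicate point is the orientation bookkeeping. The Hermitian structure produced by Theorem \ref{thm:main} must be compatible with the given orientation, and the complex orientation must be the one for which $W^+\neq0$. This is automatic: the Kähler form of the conformal metric is a section of $\Lam^+$, namely the $\alpha$-eigenform, so $J$ induces the given orientation, and LeBrun's and Tian's results then apply directly. Everything else reduces to Theorem \ref{thm:main} plus the cited classifications, so no serious obstacle is expected.
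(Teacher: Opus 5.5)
Your proposal is correct and takes the same route the paper indicates. You split on whether $W_h^+\equiv0$ and use the Hitchin--Friedrich--Kurke classification in the anti-self-dual case; otherwise you apply Theorem \ref{thm:main} (with simple connectivity removing the double cover), followed by LeBrun's classification of Hermitian Einstein surfaces and the Tian/Odaka--Spotti--Sun results in the K\"ahler--Einstein case. Your remarks that $L$ is trivial because $H^1(M;\mathbb{Z}_2)=0$, and that $J$ induces the given orientation because $\omega_1\in\Lam^+$, supply bookkeeping the paper leaves implicit.
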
	
The idea to prove these results is to apply the same computations used in Theorem \ref{thm:half-harmonic-simple-top}, combined with a zero-capacity argument for the zero set of $W_h^+$. More precisely, we use the Einstein condition to show that $W_h^+$ is a non-trivial solution of a system of first-order elliptic PDEs and we exploit a known result due to B\"ar to control the size of the zero set of $W_h^+$ \cite{Baer1999}: then, we construct a suitable sequence of Lipschitz functions $\chi_j$, which are compactly supported in $\{W_h^+\neq 0\}$ and allow us to deal with the boundary terms arising in the integration by parts procedure in $\{W_h^+\neq 0\}$, thanks to which we obtain again the identity \eqref{eq:domain-identity} on this open subset. At this point, it is sufficient to use Derdzi\'nski's celebrated result on the zeros of $W_h^+$ on Einstein four-manifolds to show that, if $W_h^+\not\equiv 0$, then $W_h^+$ nowhere vanishes and this concludes the proof \cite[Proposition 5 (iv)]{Derdzinski}. 

With these results in mind, taking the contrapositive of Theorem \ref{thm:main}, we can obtain a useful
restriction on Einstein metrics of nonpositive scalar curvature.

\begin{corollary}\label{cor:nonpositive-scalar-nonexistence}
Let $(M,h)$ be a compact oriented Einstein four-manifold with
nonpositive scalar curvature. If $W_h^+\not\equiv0$, then
\[
    \sup_{\{W_h^+\neq0\}}\frac{\beta_h}{\alpha_h}=1.
\]
\end{corollary}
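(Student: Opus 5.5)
The plan is to argue by contradiction, as the contrapositive of Theorem \ref{thm:main}. Let $(M,h)$ be compact, oriented and Einstein with $s_h\leq 0$ and $W_h^+\not\equiv0$.

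First I record the a priori bound. On $\{W_h^+\neq0\}$ we have $\alpha_h>0$, since $W_h^+$ is trace-free and nonzero. Hence $\rho=\beta_h/\alpha_h$ is well defined there and satisfies $\rho\leq 1$. So the supremum in the statement is at most $1$, and the whole task is to rule out
\[
\sup_{\{W_h^+\neq0\}}\frac{\beta_h}{\alpha_h}=:\rho_0<1 .
\]

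Suppose this strict inequality holds. On $\{W_h^+\neq0\}$ it gives $\beta_h\leq\rho_0\alpha_h$ directly. On the zero set $\{W_h^+=0\}$ all eigenvalues vanish, so $\beta_h=0=\rho_0\alpha_h$ and the inequality holds there trivially. Thus $\beta_h\leq\rho_0\alpha_h$ on all of $M$, with some constant $\rho_0<1$ and $W_h^+\not\equiv0$.

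These are exactly the hypotheses of Theorem \ref{thm:main}. It follows that, after possibly passing to a double cover $\tilde M$, the lifted metric $\tilde h$ is a Hermitian Einstein metric with positive scalar curvature. The covering map is a local isometry, so $s_{\tilde h}=s_h\leq 0$, a contradiction. Therefore $\rho_0=1$, which is the claim.

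There is no real obstacle here. The only points to check are that the inequality extends trivially to the zero set of $W_h^+$, and that the sign of the scalar curvature is unchanged under the double cover.
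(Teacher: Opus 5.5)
Your proposal is correct and is essentially the paper's own proof. You suppose the supremum is some $\rho_0<1$ and note that $\beta_h\leq\rho_0\alpha_h$ holds trivially on the zero set of $W_h^+$. Theorem~\ref{thm:main} then gives positive scalar curvature, contradicting $s_h\leq 0$. You also spell out two points the paper leaves implicit: the a priori bound $\rho\leq1$, and the invariance of $s_h$ under the double cover.
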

We highlight the fact that the spectral hypotheses in Theorems \ref{thm:Einstein-simple-top} and \ref{thm:main} are sharp: indeed, we manage to show that some K3 surfaces, endowed with a Calabi--Yau metric, admit points at which, after
reversing the hyperk\"{a}hler orientation, we have
\[
    \alpha_h=\beta_h>0;
\] 
since these spaces do not admit metrics with positive scalar
curvature, we obtain that the conclusion of Theorem
\ref{thm:main} fails.

The same method can be adapted to study compact K\"{a}hler--Einstein surfaces $(M,g,J)$ with nonpositive scalar curvature. In this setting, we can analyze the holomorphic sectional curvature
$H$ and obtain a sharp classification result, under the same spectral hypothesis as in Theorem \ref{thm:main}: more precisely, if we denote by $H_{\min}$ and $H_{\max}$ the minimum and the maximum of the holomorphic sectional curvature, respectively, and by $H_{\mathrm{av}}$ its average over $\mathbb{CP}^1$, we obtain the following result.

\begin{theorem}\label{thm:siu-yang-pinching}
Let $(M,h,J)$ be a compact K\"ahler--Einstein surface with
nonpositive scalar curvature. Assume that there exists a constant
$\theta<2/3$ such that, at every point where
$H_{\max}>H_{\min}$,
\[
    H_{\mathrm{av}}-H_{\min}
    \leq
    \theta\bigl(H_{\max}-H_{\min}\bigr).
\]
Then $W_h^-\equiv 0$. Consequently, either $h$ is flat and $M$ is finitely
covered by a flat complex torus, or $h$ has constant negative
holomorphic sectional curvature and $M$ is a compact quotient of the
complex ball.
\end{theorem}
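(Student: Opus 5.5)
The plan is to reverse the orientation and apply Theorem \ref{thm:main}, using a pointwise dictionary between the holomorphic sectional curvature and the eigenvalues of $W_h^-$.

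\textbf{Step 1: $H$ in terms of $W_h^-$.} Fix a point and a unit vector $X$, and decompose the $2$-form $X^\flat\wedge (JX)^\flat$ into self-dual and anti-self-dual parts. For the complex orientation, $\Lambda^+$ is spanned by the K\"ahler form $\omega$ and the real parts of $\Lambda^{2,0}$. The self-dual part of $X^\flat\wedge(JX)^\flat$ is always $\tfrac12\omega$. Its anti-self-dual part $\sigma_X$ has constant norm, and as $[X]$ ranges over $\mathbb{CP}^1$ it sweeps out the whole round sphere of $\Lambda^-$ (the standard twistor identification).

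The curvature operator of a K\"ahler--Einstein surface is block diagonal, with $\Ric$ acting only through the scalar curvature. On $\Lambda^+$ it is the fixed operator $\operatorname{diag}(s/4,0,0)$ in the basis $\omega/\sqrt2,\dots$. On $\Lambda^-$ it is $W_h^-+\tfrac{s}{12}\Id$. Substituting, I expect
\[
H(X)=\frac{s}{6}+k\,W_h^-(\hat\sigma_X,\hat\sigma_X)
\]
for a universal constant $k>0$ and a unit form $\hat\sigma_X\in\Lambda^-$. The constant $s/6$ agrees with $H_{\mathrm{av}}$, because $W_h^-$ is trace-free and averaging over the sphere kills the second term.

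\textbf{Step 2: translate the hypothesis.} Let $\alpha\ge\beta\ge\gamma$ be the eigenvalues of $W_h^-$, which are the eigenvalues of $W^+$ for the reversed orientation. By Step 1,
\[
H_{\max}-H_{\min}=k(\alpha-\gamma)=k(2\alpha+\beta),\qquad
H_{\mathrm{av}}-H_{\min}=-k\gamma=k(\alpha+\beta).
\]
At points with $W_h^-\neq0$ we have $H_{\max}>H_{\min}$, so the hypothesis gives $(\alpha+\beta)\le\theta(2\alpha+\beta)$. This rearranges to
\[
\beta\le\rho_0\alpha,\qquad \rho_0=\frac{2\theta-1}{1-\theta}.
\]
Since $\theta<2/3$, we get $\rho_0<1$. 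Where $W_h^-=0$ the inequality $\beta\le\rho_0\alpha$ holds trivially.

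\textbf{Step 3: apply Theorem \ref{thm:main}.} Let $\overline M$ denote $M$ with the reversed orientation; then $(\overline M,h)$ is compact, oriented and Einstein. Suppose $W_h^-\not\equiv0$. Theorem \ref{thm:main} then says that, after at worst a double cover, $h$ is Hermitian Einstein with positive scalar curvature. This contradicts $s\le0$, and passing to a double cover does not change $s$. Hence $W_h^-\equiv0$.

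\textbf{Step 4: conclude.} With $W_h^-\equiv0$, Step 1 gives $H\equiv s/6$ at each point, and $s$ is constant because $h$ is Einstein. So $h$ has constant holomorphic sectional curvature $s/6\le0$.
\begin{itemize}
\item If $s=0$, then $h$ is flat, and Bieberbach's theorem shows that $M$ is finitely covered by a flat complex torus.
\item If $s<0$, the universal cover is complex hyperbolic space, so $M$ is a compact quotient of the ball.
\end{itemize}

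The main obstacle is Step 1: getting the normalization and the signs right. In particular, I must check that the anti-self-dual components really cover the full sphere of $\Lambda^-$, so that $H_{\max}$ and $H_{\min}$ correspond exactly to $\alpha$ and $\gamma$, and that $k>0$ rather than $k<0$. A sign error here would swap the roles of $\alpha$ and $\gamma$ and change the threshold $2/3$. I would verify both on $\mathbb{CP}^1\times\mathbb{CP}^1$ with its product metric, where $W^-$ is explicitly known.
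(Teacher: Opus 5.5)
Your proposal is correct and follows the paper's proof. It uses the same identity $H(v)=\frac{s_h}{6}+\frac12\langle W_h^-(\varphi_v),\varphi_v\rangle$; note that $k=|\sigma_X^-|^2>0$, equal to $\tfrac12$ in the paper's normalization, so your sign worry resolves itself. It also uses the same $\rho_0=(2\theta-1)/(1-\theta)$ and the same contradiction with Theorem~\ref{thm:main} after reversing orientation, which the paper phrases via Corollary~\ref{cor:nonpositive-scalar-nonexistence}. The only minor differences are that you read off $H_{\mathrm{av}}=s/6$ and constant holomorphic sectional curvature directly from Step~1, where the paper cites Berger and Matsumoto--Tanno.
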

At every point $p\in M$ such that 
$H_{\max}>H_{\min}$, the ratio 
\[
\Theta:=\dfrac{H_{\mathrm{av}}-H_{\min}}{H_{\max}-H_{\min}}
\]
lies in $[1/3,2/3]$: thus, Theorem \ref{thm:siu-yang-pinching} only assumes a uniform gap below the upper bound $2/3$. Siu and Yang obtained the same rigidity theorem under a nonpositive
bisectional curvature assumption and imposing $\Theta<\frac{2}{3(1+\sqrt{6/11})}$ \cite{SiuYang}, while Guan proved the same conclusion
under the condition $\Theta\leq1/2$ without a sign assumption on the
bisectional curvature \cite{Guan2017}. 
Our statement requires no sign assumption on the sectional or the bisectional
curvature and
covers every uniform bound strictly below
$2/3$. We point out that Theorem \ref{thm:siu-yang-pinching} is optimal: indeed, recently, compact negatively curved K\"ahler--Einstein manifolds which are not covered by the complex ball have been constructed \emph{via} gluing and deformation methods \cite{GuenanciaHamenstadt}, which means that
the value $2/3$ cannot be included.

The combination of Derdzi\'nski's Weitzenb\"ock formula, LeBrun's conformal method and the zero-capacity argument (Proposition \ref{prop:capacity-form}) can be, to some extent, also applied to noncompact Einstein four-manifolds: in particular, in the presence of suitable volume growth and curvature decay assumptions at infinity, we can adapt our line of reasoning to the complete case. It is well known that the study of the eigenvalues of $W_h^+$ is closely related to the classification of the so-called \emph{gravitational instantons}, i.e. complete, noncompact four-manifolds admitting a Ricci-flat metric such that the $L^2$-norm of the curvature tensor is finite: indeed, these spaces can be distinguished into three different types, which correspond to different multiplicities of the eigenvalues of $W_h^+$
(see e.g. the recent survey \cite{LiSun} and the references therein).  
In our setting, we are able to prove the following general result for noncompact Ricci-flat manifolds:
\begin{theorem}\label{thm:intro-extensions}
Let $(M,h)$ be a complete, oriented, noncompact Ricci-flat
four-manifold. Assume that $W_h^+\not\equiv0$ and that, for some
$\rho_0<1$,
\begin{equation}\label{eq:noncompact-uniform-gap}
\beta_h\leq\rho_0\alpha_h
\quad\text{on }M.
\end{equation}
Suppose moreover that there exist $\nu,q\geq0$ and $p_0\in M$ such that
\begin{equation}\label{eq:noncompact-volume-curvature}
\begin{aligned}
\begin{cases}\operatorname{Vol}_h(B_h(p_0,R))
=O(R^\nu)\\
|\operatorname{Rm}_h|=O(R^{-q})\quad\text{on } M\setminus B_h(p_0,R)
\end{cases}\text{as }R\to\infty,\quad
\nu-2-\frac{2q}{3}&<0.
\end{aligned}
\end{equation}
Then, after at worst passing to a double cover of $M$, $h$ is conformal to a K\"ahler metric with positive scalar curvature. In particular, $h$ is Ricci-flat,
Hermitian and non-K\"ahler.
\end{theorem}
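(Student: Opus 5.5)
The plan is to reproduce the compact argument behind Theorem \ref{thm:main} on the open set $U=\{W_h^+\neq0\}$, and to control the boundary terms with cutoff functions that handle both the zero set of $W_h^+$ and infinity.

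\textbf{Setup on $U$.} On $U$ the uniform gap $\beta_h\le\rho_0\alpha_h$ makes $\alpha_h$ a simple, positive eigenvalue. Hence the $\alpha_h$-eigenline $L\subset\Lam^+$ is a smooth real line bundle, and it becomes trivial after passing to at worst a double cover. We set $g=\alpha_h^{2/3}h$ on $U$ and choose a global unit eigenform $\omega$. Since $h$ is Ricci-flat, $\delta_hW_h^+=0$. Therefore the first-order identities of Proposition~\ref{prop:weighted-derdzinski} and the weighted Weitzenb\"ock formula hold pointwise on $U$. Integrated against a test function, they produce the integrand of \eqref{eq:domain-identity}, which is a nonnegative multiple of $|\nabla^g\omega|^2$, plus an error term involving $\nabla\chi$.

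\textbf{Two cutoffs.} We take $\chi=\chi_j\,\psi_R$.
\begin{itemize}
\item $\chi_j$ is the zero-capacity cutoff of Proposition~\ref{prop:capacity-form}. It is compactly supported in $U$ and handles the zero set of $W_h^+$, which is small by B\"ar's theorem because $W_h^+$ solves a first-order elliptic system; this uses only local information.
\item $\psi_R$ is a radial cutoff equal to $1$ on $B_h(p_0,R)$, supported in $B_h(p_0,2R)$, with $|\nabla\psi_R|\le C/R$.
\end{itemize}
The boundary term coming from $\psi_R$ has the schematic form
\[
\int |\nabla\psi_R|\,|W_h^+|^{a}|\nabla W_h^+|^{b}\,dV_h .
\]
The weights $a,b$ come from the conformal factor $\alpha_h^{2/3}$ and from the homogeneity of the identity; we expect the integrand to be of total curvature degree $2/3$ with one derivative. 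To control it we combine $|\nabla W_h^+|\lesssim |\Rm_h|\cdot r^{-1}$ with $|\Rm_h|=O(R^{-q})$. The derivative bound will come from Shi-type or elliptic estimates for Ricci-flat metrics with curvature decay, or else from absorbing $|\nabla W|$ by Cauchy--Schwarz into the good term. Either way the error should be bounded by
\[
R^{-1}\cdot R^{-2q/3}\cdot R^{\nu}\cdot R^{-1} = R^{\nu-2-2q/3},
\]
which tends to $0$ by \eqref{eq:noncompact-volume-curvature}. This exponent bookkeeping is the \textbf{main obstacle}. The key question is whether the boundary term is genuinely quadratic in the gradient of the cutoff after Cauchy--Schwarz, which gives the $R^{-2}$ factor, with curvature weight $|W|^{2/3}$. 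That is, I would first try to write the error as
\[
\int|\nabla\chi|^2\,\alpha_h^{2/3}\,dV_h ,
\]
after absorbing the cross term into the positive part of \eqref{eq:domain-identity}. If so, no derivative estimate is needed and the exponent matches exactly.

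\textbf{Conclusion.} Letting $j\to\infty$ and then $R\to\infty$ gives \eqref{eq:domain-identity} on $U$. This forces $\nabla^g\omega=0$, so $g$ is K\"ahler on $U$ with K\"ahler form $\omega$, $\rho\equiv-1/2$, and $s_g>0$ there. Derdzi\'nski's result on the zeros of $W^+$ for Einstein metrics \cite[Proposition 5 (iv)]{Derdzinski} applies because $W_h^+\not\equiv0$: it gives $U=M$, so $g$ is a K\"ahler metric on all of $M$ (on the double cover if needed) with positive scalar curvature. Finally, $h=\alpha_h^{-2/3}g$ is Hermitian with respect to the same complex structure, and Ricci-flat by hypothesis. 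It cannot be K\"ahler, since a Ricci-flat K\"ahler metric has $W^+$ with $\alpha_h=\beta_h=\gamma_h=0$ in the complex orientation, contradicting $W_h^+\neq0$.
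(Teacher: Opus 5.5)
Your proposal matches the paper's proof. The paper works on $X=\{W_h^+\neq0\}$ with $g=\alpha_h^{2/3}h$ and builds cutoffs $\eta_{\delta,R}$ from B\"ar's theorem applied to $N\cap\overline{B_h(p_0,2R)}$, with $\int_X|d\eta_{\delta,R}|_g^2\,d\mu_g\le C_R\delta^{2/3}$. It multiplies these by a radial cutoff $\psi_R$, chooses $\delta_j$ diagonally, and concludes with Proposition~\ref{prop:capacity-form} and \cite[Proposition 5 (iv)]{Derdzinski}, as you do. The ``main obstacle'' you flag is settled by the route you say you would try first, and the Shi-type alternative is not needed: Proposition~\ref{prop:capacity-form} absorbs the cross term by Young's inequality using $\mathcal Q_\rho\ge q_0>0$, which is where $\rho_0<1$ is used beyond simplicity of $\alpha_h$. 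So only $\int_X|d\chi|_g^2\,d\mu_g=\int\alpha_h^{2/3}|d\chi|_h^2\,d\mu_h$ has to tend to zero, and $\alpha_h\le C|\Rm_h|$ (since $\Rm_h=W_h$) bounds it by $CR^{-2-2q/3}\operatorname{Vol}_h(B_h(p_0,2R))=O(R^{\nu-2-2q/3})$, with no derivative estimates on $W_h^+$.
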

As far as gravitational instantons are concerned, we can obtain two rigidity results. The first is a direct consequence of Theorem \ref{thm:intro-extensions}: indeed, recalling the celebrated work of Bando, Kasue and Nakajima \cite{BandoKasueNakajima}, we show that $(M,h)$ is a Type II gravitational instanton (i.e. Hermitian non-K\"ahler) having an asymptotically locally Euclidean (ALE) end, under a non-collapsing hypothesis on the volume of geodesic balls (see Corollary \ref{cor:noncollapsed-gravitational-instanton}). Furthermore, we also apply the zero-capacity argument 
to prove a classification result for collapsed gravitational instantons, under the same hypothesis as in Theorem \ref{thm:main}, obtaining the examples found by Biquard and Gauduchon \cite{BiquardGauduchon}, recently classified as the only collapsed Type II instantons with an asymptotically locally flat (ALF) end by Li \cite{LiGravInstantons} (Theorem \ref{thm:collapsed-gravitational-instanton}). We highlight the fact that, as explained later (see Remark \ref{rem:cheegertian}), this result can be seen as a direct consequence of Theorem \ref{thm:intro-extensions} and a celebrated result due to Cheeger and Tian \cite{CheegerTian}: however, the proof we included here only relies on the zero-capacity argument and H\"older's inequality. Similar techniques involving the use of the weighted Weitzenb\"ock formula \eqref{eq:weighted-weitzenbock} (but without the zero-capacity argument) are also exploited in \cite{BiquardGauduchonLeBrun}. 

As for the compact case, the uniform bound $\beta_h\leq\rho_0\alpha_h$, with $\rho_0<1$, is optimal: indeed, we show that there exist \emph{multicentered Gibbons--Hawking gravitational instantons} $(M,h)$ \cite{GibbonsHawking, GibbonsHawking2} with tetrahedral symmetry which, if we consider the orientation opposite to the one induced by the hyperk\"ahler structure, admit points where $\alpha_h=\beta_h$ and for which the conclusion of Theorem \ref{thm:intro-extensions} does not hold. In this case, we also describe an explicit loss of continuity phenomenon for $\rho$ around a point where $W_h^+=0$. 

The paper is organized as follows. In Section \ref{sec:confchange}, we provide some preliminaries and 
describe LeBrun's conformal normalization: furthermore, we prove the weighted
Derdzi\'nski identities and the crucial first-order identities \eqref{eq:c-relations}. In Section \ref{sec:proof}, we prove Theorem
\ref{thm:half-harmonic-simple-top} (and, therefore, Theorem \ref{thm:Einstein-simple-top}) and Corollary
\ref{cor:det-version-hhw}. Section \ref{sec:closed-einstein} deals with the
zero set of $W_h^+$: here we prove Theorem \ref{thm:main} and its consequences (Corollary \ref{cor:det-version}, Corollary \ref{cor:nonpositive-scalar-nonexistence}). Section \ref{sec:ke-holomorphic-pinching}
is devoted to the proof of Theorem \ref{thm:siu-yang-pinching}. Section
\ref{sec:extensions} treats complete Ricci-flat manifolds and
gravitational instantons: here we prove Theorem \ref{thm:intro-extensions} and the classification results for noncollapsed (Corollary \ref{cor:noncollapsed-gravitational-instanton}) and collapsed (Theorem \ref{thm:collapsed-gravitational-instanton}) gravitational instantons. Finally, in Section
\ref{sec:ricci-flat-endpoint-model} we construct the compact and
noncompact examples for which the largest eigenvalue has multiplicity
two, which show the optimality of our results. 
\begin{remark}
    We point out that, given any oriented Riemannian four-manifold $(M,h)$, reversing the orientation causes the subbundles $\Lam^+$ and $\Lam^-$ to swap, which implies that 
    the operators $W_h^+$ and $W_h^-$ are exchanged. Hence, all the results stated for $W_h^+$ can also be written in terms of $W_h^-$: the only important caveat is that, if we choose to work with the anti-self-dual Weyl curvature $W_h^-$, the conformal metric $g=\alpha_h^{2/3}h$ will be K\"ahler with respect to the \emph{opposite} orientation (here $\alpha_h$ is the largest eigenvalue of $W_h^-$ with respect to the previous orientation).
\end{remark}

\section*{Acknowledgements}
The authors would like to thank Andrea Malchiodi and Benedetta Piroddi for interesting and helpful discussions regarding the examples described in Section \ref{sec:ricci-flat-endpoint-model}. Both authors are members of the Gruppo Nazionale per le Strutture Algebriche, Geometriche e le loro Applicazioni (GNSAGA) of the Istituto Nazionale di Alta Matematica (INdAM). 
	
	\section{LeBrun's conformal change} \label{sec:confchange}
	
	In this section, we adapt a strategy due to 
	LeBrun \cite{LeBrun2021} based on 
	a specific conformal change of the metric $h$. 
    First, we recall some well-known facts about four-dimensional Riemannian geometry (see e.g. \cite{Besse} for more detailed discussions). 
    Let $(M,h)$ be a connected, oriented Riemannian four-manifold: the action of the Hodge operator $*=*_h$ on the bundle of 2-forms $\Lam^2$ induces the conformally invariant splitting $\Lam^2=\Lam_h^+\oplus\Lam_h^-$, where, at every point, $\Lam_h^\pm$ is the eigenspace of $*$ associated to the eigenvalue $\pm 1$. We call $\Lam_h^\pm$ the subbundles of \emph{self-dual} and \emph{anti-self-dual} forms, respectively, and we say that a 2-form $\omega$ is self-dual (respectively, anti-self-dual) if
    $*\omega=\omega$ (respectively, if $*\omega=-\omega$): 
    by conformal invariance of the Hodge operator on 2-forms, we know that 
    the subbundles $\Lam_h^\pm$ are conformally invariant, hence, throughout the paper, we will write $\Lam^\pm=\Lam_{\overline{h}}^\pm$ for every $\overline{h}\in [h]$. 
    Since the Weyl tensor $W_h$ preserves the subbundles, this decomposition also induces a splitting of $W_h$ into a self-dual and an anti-self-dual part, denoted respectively by 
    $W_h^+$ and $W_h^-$. Both can be regarded as linear, trace-free, symmetric endomorphisms of the subbundles $\Lam^\pm$ and one has $*W_h^\pm=\pm W_h^\pm$: equivalently, $W_h^+$ and $W_h^-$ may be viewed as a self-dual and an anti-self-dual $\Lam^\pm$-valued 2-form, respectively.  
    
    We say that $(M,h)$ has \emph{harmonic Weyl curvature} if 
    $\delta_hW_h=0$ on $M$, where $\delta_h$ denotes the
    divergence operator: since $M$ is four-dimensional, the splitting of $W_h$ allows us to define metrics with 
    harmonic (anti-)self-dual Weyl curvature, i.e. metrics satisfying $\delta_hW_h^\pm=0$ on $M$. 
    The harmonic Weyl condition enjoys a weighted conformal covariance property (see also \cite{LeBrun2021, PenroseRindler2}):
    \begin{lemma}
		\label{divergenceconfchange}
		Let $(M,h)$ be a Riemannian four-manifold and let $g=f^{-2}h$, for some smooth positive function $f$ on $M$. 
		Then,
		\[
		\delta_g(fW_g)=f(\delta_hW_h).
		\]
        In particular, we have that
            \begin{equation}\label{eq:conformal-divergence}
			\delta_h W_h=0
			\quad\Longleftrightarrow\quad
			\delta_g(fW_g)=0;
		\end{equation}  
		furthermore, \eqref{eq:conformal-divergence} holds independently for $W_h^{\pm}$.
	\end{lemma}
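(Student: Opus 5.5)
The plan is a direct computation in which the Weyl tensor is treated as a tensor-valued $2$-form. I use the type $(1,3)$ Weyl tensor $W^a{}_{bcd}$, which is conformally invariant: $W_g = W_h$ as $(1,3)$ tensors. This invariance is what makes the weight $f$ appear naturally. I regard $W$ as a $2$-form in the last two indices $(c,d)$ with values in $\operatorname{End}(TM)$. The divergence is taken in the first index, contracted against the metric. Concretely, $(\delta_g T)_{bcd} = \nabla^g_a T^a{}_{bcd}$ for the $(1,3)$ version, up to the sign and index conventions fixed in \cite{Besse}. With this normalization, the statement amounts to showing that
\[
\nabla^g_a\bigl(f\,W^a{}_{bcd}\bigr) = f\,\nabla^h_a W^a{}_{bcd},
\]
with the appropriate metric used to identify the divergence as a tensor (possibly an extra power of $f$, depending on how $\delta$ is normalized on each side; I will fix the convention so the stated formula holds, which is LeBrun's).

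First I recall the transformation of the Levi-Civita connection under $g = e^{2u}h$ with $u = -\log f$:
\[
\nabla^g_X Y = \nabla^h_X Y + du(X)Y + du(Y)X - h(X,Y)\,\nabla^h u.
\]
I then expand $\nabla^g_a W^a{}_{bcd} - \nabla^h_a W^a{}_{bcd}$ as a sum of Christoffel-difference terms, one for each index. The contributions are contracted against $W$, and I simplify them using three facts: $W$ is totally trace-free; it has the Riemann symmetries; and it satisfies the first Bianchi identity $W^a{}_{[bcd]} = 0$.

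The bookkeeping should give a total correction of the form $c\,u_a W^a{}_{bcd}$ with $c = n-3 = 1$ in dimension four. The upper index contributes $n\,u_a W^a{}_{bcd}$. The lower indices contribute terms of the form $-u_a W^a{}_{bcd}$ and $-u_b W^a{}_{acd}$, the latter vanishing by the trace-free property. The remaining cross terms cancel by Bianchi and antisymmetry. Thus
\[
\delta_g W = \delta_h W + (n-3)\,W(\nabla u,\cdot,\cdot,\cdot) = \delta_h W - f^{-1}W(\nabla f,\ldots).
\]
Multiplying by $f$ and using the Leibniz rule $\delta_g(fW) = f\,\delta_g W + W(\nabla f,\ldots)$ cancels the gradient term exactly. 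This yields $\delta_g(fW_g) = f\,\delta_h W_h$. The main obstacle is precisely this index bookkeeping: getting the coefficient $n-3$ exactly, and matching the raised-index convention so that no stray power of $f$ remains. I would check the coefficient against the known general-dimension formula $\delta_g W = \delta_h W + (n-3)W(\nabla u,\ldots)$.

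The equivalence \eqref{eq:conformal-divergence} follows immediately because $f > 0$. For the self-dual and anti-self-dual parts, I use the fact that the Hodge star on $2$-forms is conformally invariant in dimension four. Hence the projections $W \mapsto W^\pm = \tfrac12(W \pm *W)$, acting on the form indices $(c,d)$, commute with the conformal change, so $W_g^\pm = W_h^\pm$ as $(1,3)$ tensors. The same computation applies verbatim to $W^\pm$: the divergence acts on the first index, and $*$ acts on the last pair, which is parallel for either metric. The only needed inputs are that $W^\pm$ is trace-free and satisfies Bianchi, both of which hold since $W^\pm$ are algebraic Weyl tensors in dimension four. Hence $\delta_g(fW_g^\pm) = f\,\delta_h W_h^\pm$ separately.
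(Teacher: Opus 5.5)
Your proposal is correct and follows essentially the same route as the paper: the conformal change formula $\delta_g W=\delta_h W+(n-3)\,W(\nabla u,\cdot,\cdot,\cdot)$ for the $(1,3)$ Weyl tensor, then the Leibniz rule with $f=e^{-u}$ to cancel the gradient term, then conformal invariance of $*$ on $2$-forms (commuting with $\nabla$ on the last index pair) to pass to $W^\pm$. The only difference is that the paper cites the conformal change formula, whereas you derive it from the Christoffel symbols. Your count checks out: $n$ from the upper index, $-3$ from the three lower indices, the trace terms vanish, and the three terms coming from $-h(X,Y)\nabla^h u$ sum to zero by antisymmetry and the first Bianchi identity. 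With the $(1,3)$ convention no stray power of $f$ appears.
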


    \begin{proof}
		First, let $g=e^{2u}h$, $u\in C^{\infty}(M)$: by the conformal invariance of the $(1,3)$ version of
		the Weyl tensor, we obtain (see also \cite{CatinoMastroliaBook})
		\[
		\delta_gW_g=\delta_hW_h
		+W_h(\nabla^hu, \cdot, \cdot, \cdot).
		\]
		An easy computation then shows that, for every
		smooth function $f$ on $M$,
		\begin{equation*}
		\delta_g(fW_g)=W_h(\nabla^hf, \cdot, \cdot, \cdot)
		+f[\delta_hW_h+W_h(\nabla^hu, \cdot, \cdot, \cdot)].
		\end{equation*}
		Now, putting $f=e^{-u}$, we immediately obtain
		\begin{align*}
			\delta_g(fW_g)=&W_h(\nabla^hf, \cdot, \cdot, \cdot)+f\left[\delta_hW_h-
			\dfrac{1}{f}W_h(\nabla^hf, \cdot, \cdot, \cdot)\right]\\
			=&f\delta_hW_h,
		\end{align*}
		which proves the claims. The same holds for $W_h^{\pm}$ since the 
		Hodge operator commutes with the covariant
		derivative induced by the Levi-Civita connection.
	\end{proof}
    \begin{remark}
        One can actually prove a more general result: if $(M,h)$ is a Riemannian manifold of dimension $n\geq4$, then 
        $\delta_g(f^{n-3}W_g)=f^{n-3}(\delta_hW_h)$, where
        $g=f^{-2}h$. 
    \end{remark}
    From now on, we focus on $W_h^+$: since, at every point 
    $p\in M$, $W_h^+$ is a symmetric, linear map from 
    $(\Lam^+)_p$ to itself, we can define its eigenvalues
    \[
    \alpha_h\geq\beta_h\geq\gamma_h.
    \]
	Let $U\subset M$ be an open set on which
	$\alpha_h>\beta_h$: then, $\alpha_h$ is a smooth positive function on $U$. 
    We set
	\[
	f=\alpha_h^{-1/3},
	\quad
	g=f^{-2}h=\alpha_h^{2/3}h.
	\]
	By the conformal invariance of $*$ and $W_h^+$, viewed as a 
    $(1,3)$-tensor, the eigenvalues of $W_g^+:\Lam^+\to\Lam^+$ are given by
	\[
	\alpha=f^2\alpha_h,
	\quad
	\beta=f^2\beta_h,
	\quad
	\gamma=f^2\gamma_h.
	\]
	Consequently,
	\[
	f\alpha=1,
	\quad
	\frac{\beta}{\alpha}=\frac{\beta_h}{\alpha_h}=\rho,
	\quad
	\gamma=-(1+\rho)\alpha.
	\]
	All the geometric quantities below are computed with respect to $g$, unless
	explicitly stated otherwise.
%	This is the weighted conformal normalization used by LeBrun in his
%	conformal proof of Wu's theorem, building on Derdzi\'nski's construction
%	\cite{Derdzinski,LeBrun2021}.  
	We assume that $h$ has harmonic self-dual Weyl curvature,
\[
    \delta_hW_h^+=0
\]
and we introduce
	\[
	\calW^+=fW^+,
	\]
	whose eigenvalues are
	\[
	\mu_1=1,
	\quad
	\mu_2=\rho,
	\quad
	\mu_3=-(1+\rho);
	\]
	in particular, the largest eigenvalue $\mu_1$ is constant. Moreover, by \eqref{eq:conformal-divergence},
\[
    \delta\mathcal W^+=0.
\]
Since $\alpha_h>\beta_h$ on $U$, the eigenspace of $W_h^+$ associated to $\alpha_h$ varies smoothly on $U$, hence determining a smooth, real line bundle 
	\[
	L\subset\Lam^+|_U
	\]
    over $U$. As explained in \cite[Section 2]{LeBrun2021}, $L$ need not to be orientable; however, 
    its sphere bundle
    \[
    \widehat U:=\{\omega\in L: \norm{\omega}^2=2\}
    \]
    is a double cover of $U$ and the pullback of $L$ to
    $\widehat U$ has a global smooth section $\omega_1$
    such that
    \[
	\norm{\omega_1}^2=2,
	\quad
	W^+(\omega_1)=\alpha\omega_1;
	\]
    Therefore, up to passing to this double cover, we can always assume that $L$ is orientable and, hence, trivial; 
    for notational simplicity, whenever we pass to such a cover we use the same symbols for the pulled-back tensors and metrics.
    The normalization $\norm{\omega_1}^2=2$ is chosen since, given any point $p$, 
    a self-dual form $\eta\in(\Lam^+)_p$ with $\norm{\eta}^2=2$ corresponds to an almost complex structure $J$, compatible with $g$ and the orientation, on $T_pM$ such that $\eta=g(J\cdot, \cdot)$.

	Following Derdzi\'nski \cite{Derdzinski}, let $M_W$ denote the regular spectral set of $W_g^+$,
namely the set of points at which the number of distinct eigenvalues
is locally constant. Let $M_W^U:=M_W\cap U$. Since $\alpha>\beta$ on $U$, if
\[
    Z:=\{x\in U:\beta(x)=\gamma(x)\},
\]
then
\[
    M_W^U
    =
    (U\setminus Z)\cup\operatorname{int}_U Z
    =
    U\setminus\partial_U Z.
\]
Observe that $Z$ is closed in $U$ and its boundary has empty interior; thus
$M_W^U$ is open and dense in $U$. 
	Locally, we can complete the section $\omega_1$ to an oriented orthogonal
	frame $\{\omega_1,\omega_2,\omega_3\}$ of $\Lam^+$, which we can express as
	\begin{equation} \label{eq:localeigenframe}
	\omega_1=e^{12}+e^{34},
	\quad
	\omega_2=e^{13}-e^{24},
	\quad
	\omega_3=e^{14}+e^{23},
	\end{equation}
	where $e^{ij}=e^i\wedge e^j$ and $\{e^i\}$ is a local oriented $g$-orthonormal coframe.
	Near any point at which $\beta\neq\gamma$, the forms $\omega_2$ and $\omega_3$ are chosen to be
	smooth eigenforms of $W^+$, i.e.,
	\[
	W^+(\omega_2)=\beta\omega_2,
	\quad
	W^+(\omega_3)=\gamma\omega_3;
	\]
	on the other hand, on $\operatorname{int}_{U}Z$ the restriction of $W^+$ to
	$L^\perp$ is a multiple of the identity, so any smooth oriented orthogonal
	frame of $L^\perp$ is an eigenframe. This observation allows us to
	perform all the following local computations on $M_W^U$. Since $\norm{\omega_1}$ is constant, there are locally defined one-forms $a$ and $c$ \cite{Derdzinski} such that 
	\begin{equation*}
		\nabla\omega_1=a\otimes\omega_2-c\otimes\omega_3,
	\end{equation*}
	where
	\[
	a=\sum_{i=1}^4 a_i e^i,
	\quad
	c=\sum_{i=1}^4 c_i e^i. 
	\]
	We note that, on $M_W^U$, the completion of $\omega_1$
	to an orthogonal basis defined by 
	\eqref{eq:localeigenframe} also provides the equations
	for the covariant derivatives of $\omega_2$ and
	$\omega_3$, which yield the system
	\begin{equation} \label{eq:conneigenframe}
    \begin{cases}
		\nabla\omega_1=a\otimes\omega_2-c\otimes\omega_3, \\
		\nabla\omega_2=-a\otimes\omega_1+b\otimes\omega_3\\
		\nabla\omega_3=c\otimes\omega_1-b\otimes\omega_2,
        \end{cases}
	\end{equation}
	where $b$ is a one-form locally defined by
	$b=\sum_{i=1}^4b_ie^i$ (see \cite[Equation (32)]{Derdzinski}).

    We now prove new fundamental first-order identities on the
    coefficients of $\omega_i$, in the spirit of Derdzi\'nski's local computations \cite{Derdzinski}.
	\begin{proposition}\label{prop:weighted-derdzinski}
		On $M_W^U$, one has
		\begin{equation}\label{eq:weighted-relation}
			(1-\rho)\iota_{a^\sharp}\omega_3+(2+\rho)\iota_{c^\sharp}\omega_2=0.
		\end{equation}
		Equivalently, if
		\[
		k=\frac{1-\rho}{2+\rho},
		\]
		then
		\begin{equation}\label{eq:c-relations}
			c_2=ka_1,
			\quad
			c_1=-ka_2,
			\quad
			c_4=ka_3,
			\quad
			c_3=-ka_4.
		\end{equation}
	\end{proposition}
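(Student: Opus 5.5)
The plan is to read off \eqref{eq:weighted-relation} as the $\omega_1$-component of the divergence equation $\delta\mathcal W^+=0$, written in the local eigenframe. On $M_W^U$ we write
\[
\mathcal W^+=\tfrac12\sum_{i=1}^3\mu_i\,\omega_i\otimes\omega_i ,
\qquad \mu_1=1,\ \mu_2=\rho,\ \mu_3=-(1+\rho),
\]
using $|\omega_i|^2=2$, so that $\mathcal W^+(\omega_i)=\mu_i\omega_i$. We regard $\mathcal W^+$ as a $\Lambda^+$-valued $2$-form, with the first factor as the form slot. Its divergence $\delta\mathcal W^+=-\sum_k\iota_{e_k}\nabla_{e_k}\mathcal W^+$ is then a $\Lambda^+$-valued $1$-form. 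We differentiate with the Leibniz rule and use the system \eqref{eq:conneigenframe}. On $\operatorname{int}_U Z$ any oriented frame of $L^\perp$ is an eigenframe, so the same computation applies there as well.

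Expanding $\nabla\mathcal W^+$ gives three kinds of terms. There is $\tfrac12\sum_i d\mu_i\otimes\omega_i\otimes\omega_i$, there is $\tfrac12\sum_i\mu_i\,\nabla\omega_i\otimes\omega_i$ with the derivative in the form slot, and there is $\tfrac12\sum_i\mu_i\,\omega_i\otimes\nabla\omega_i$ with the derivative in the value slot. We then collect the coefficient of $\omega_1$ in the value slot.
\begin{enumerate}[label=(\roman*)]
\item The $d\mu_i$-terms contribute only $d\mu_1=0$ to the $\omega_1$-component, because $\mu_1\equiv1$. This is the point of LeBrun's normalization: it removes all derivatives of $\rho$ from this component.
\item From $i=1$ in the form slot we get $\tfrac12\,\iota_{e_k}\nabla_{e_k}\omega_1=\tfrac12\bigl(\iota_{a^\sharp}\omega_2-\iota_{c^\sharp}\omega_3\bigr)$.
\item From $i=2,3$ in the value slot we use $\nabla\omega_2\ni -a\otimes\omega_1$ and $\nabla\omega_3\ni c\otimes\omega_1$. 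These give $-\tfrac12\rho\,\iota_{a^\sharp}\omega_2$ and $-\tfrac12(1+\rho)\,\iota_{c^\sharp}\omega_3$ respectively.
\end{enumerate}
Summing, the vanishing of the $\omega_1$-component of $\delta\mathcal W^+$ reads
\[
(1-\rho)\,\iota_{a^\sharp}\omega_2-(2+\rho)\,\iota_{c^\sharp}\omega_3=0 .
\]

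Next we pass to components. We use \eqref{eq:localeigenframe}, namely $\iota_{a^\sharp}\omega_2=a_1e^3-a_3e^1-a_2e^4+a_4e^2$ and $\iota_{c^\sharp}\omega_3=c_1e^4-c_4e^1+c_2e^3-c_3e^2$. Comparing coefficients of $e^1,\dots,e^4$ yields exactly \eqref{eq:c-relations}, with $k=(1-\rho)/(2+\rho)$; note $2+\rho\geq 3/2>0$. Finally, \eqref{eq:weighted-relation} follows from the displayed identity by applying the almost complex structure $J_1$ determined by $\omega_1$ to this $1$-form. Since $\omega_1,\omega_2,\omega_3$ satisfy quaternionic relations, $J_1$ carries $\iota_X\omega_2$ to $\pm\iota_X\omega_3$ and $\iota_X\omega_3$ to $\mp\iota_X\omega_2$. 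So the two forms of the relation are equivalent, and one can also check this directly on components, since both reduce to \eqref{eq:c-relations}.

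The main obstacle is bookkeeping rather than ideas. It requires a consistent convention for which slot of $\mathcal W^+$ the divergence contracts, and for the sign of $\delta$. It also requires correct signs in $\iota_{e_k}\nabla_{e_k}$ applied to the $\omega_i$ and in the interior products. I would fix these conventions first by checking them on the trivial case $\rho=-\tfrac12$, $a=c=0$, where everything must vanish. I would then cross-check the final component relations against the version of Derdzi\'nski's formulas \cite[Equation (32)]{Derdzinski} applied to $\mathcal W^+$. Nothing about harmonicity beyond $\delta\mathcal W^+=0$, which comes from Lemma~\ref{divergenceconfchange}, and the constancy of $\mu_1$ is needed.
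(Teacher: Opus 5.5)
Your proposal is correct and follows essentially the paper's route. The paper also extracts the $\omega_1$-component of $\delta A=0$ for $A=\frac12\sum_r\mu_r\,\omega_r\otimes\omega_r$, by fixing $(k,l)=(1,2)$ in index notation. This gives Derdzi\'nski's identity $d\mu_1=(\mu_1-\mu_2)\iota_{a^\sharp}\omega_3+(\mu_1-\mu_3)\iota_{c^\sharp}\omega_2$ for general $\mu_r$, and the paper then sets $\mu_1=1$. Your intermediate relation $(1-\rho)\iota_{a^\sharp}\omega_2-(2+\rho)\iota_{c^\sharp}\omega_3=0$ is just the $J_1$-rotated form of the paper's, since the paper's choice of free index $j$ performs that rotation implicitly. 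Your component comparison correctly yields \eqref{eq:c-relations}.
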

	
	\begin{proof}
		Fix $p\in M_W^U$ and a neighborhood $V$ on which the eigenframe
$\{\omega_1,\omega_2,\omega_3\}$ is smooth. Let 
\[
    A=\frac12\sum_{r=1}^3\mu_r\,\omega_r\otimes\omega_r
\]
be a divergence-free section of $S_0^2\Lam^+$ which is diagonal in this frame, i.e.
		\[
		A(\omega_r)=\mu_r\omega_r,
		\quad
		\norm{\omega_r}^2=2.
		\]
		Equivalently,
		\[
		A_{ijkl}=\frac12\sum_{r=1}^3\mu_r(\omega_r)_{ij}(\omega_r)_{kl}.
		\]
		We compute at a fixed point and choose the tangent frame $e_1,\ldots,e_4$ to be geodesic there: we use
		the fact that, for every $p\in M_W^U$, the operator $W^+$ can be smoothly diagonalized in an open neighborhood of $p$. The condition $\delta A=0$ is
		\[
		0=(\delta A)_{jkl}=-\nabla_iA_{ijkl},
		\]
		i.e.
		\begin{equation} \label{vanishdiverA}
			0=\sum_{i=1}^4\sum_{r=1}^3
			\left[
			d\mu_r(e_i)(\omega_r)_{ij}(\omega_r)_{kl}
            +\mu_r(\nabla_i\omega_r)_{ij}(\omega_r)_{kl}
			+\mu_r(\omega_r)_{ij}(\nabla_i\omega_r)_{kl}
			\right].
		\end{equation}
		First, we compute \eqref{vanishdiverA} putting
		$j=l=2$, $k=1$, which yields 
		\begin{align*}
			0&=\sum_{i=1}^4\sum_{r=1}^3
			\left[
			d\mu_r(e_i)(\omega_r)_{i2}(\omega_r)_{12}
			+\mu_r(\nabla_i\omega_r)_{i2}(\omega_r)_{12}
			+\mu_r(\omega_r)_{i2}(\nabla_i\omega_r)_{12}
			\right]\\
			&=:I+II+III.
		\end{align*}
		By \eqref{eq:localeigenframe} and \eqref{eq:conneigenframe}, a direct computation gives
		\begin{align*}
			I&=\sum_{i,r}d\mu_r(e_i)(\omega_r)_{i2}(\omega_r)_{12}=
			d\mu_1(e_1);\\
			II&=\sum_{i=1}^4\mu_1(\nabla_i\omega_1)_{i2}=
			\sum_{i=1}^4\mu_1(a_i\omega_2-c_i\omega_3)_{i2}=
			\mu_1(a_4+c_3);\\
			III&=\mu_1(\nabla_1\omega_1)_{12}+
			\mu_2(\nabla_4\omega_2)_{12}-\mu_3(\nabla_3\omega_3)_{12}\\
			&=\mu_2(-a_4\omega_1+b_4\omega_3)_{12}-\mu_3(c_3\omega_1-b_3\omega_2)_{12}=-\mu_2a_4-\mu_3c_3;
		\end{align*}
		this implies that
		\[
		d\mu_1(e_1)=-(\mu_1-\mu_2)a_4-(\mu_1-\mu_3)c_3.
		\]
		Performing analogous computations with
		$(j,k,l)=(1,1,2), (3,1,2), (4,1,2)$, we also obtain 
		\begin{align*}
			d\mu_1(e_2)&=-(\mu_1-\mu_2)a_3+(\mu_1-\mu_3)c_4\\
			d\mu_1(e_3)&=(\mu_1-\mu_2)a_2+(\mu_1-\mu_3)c_1\\
			d\mu_1(e_4)&=(\mu_1-\mu_2)a_1-(\mu_1-\mu_3)c_2,
		\end{align*}
		i.e. 
		\begin{align*}
			d\mu_1=&[-(\mu_1-\mu_2)a_4-(\mu_1-\mu_3)c_3]e^1+
			[-(\mu_1-\mu_2)a_3+(\mu_1-\mu_3)c_4]e^2\\
			&+[(\mu_1-\mu_2)a_2+(\mu_1-\mu_3)c_1]e^3+
			[(\mu_1-\mu_2)a_1-(\mu_1-\mu_3)c_2]e^4\\
			=&(\mu_1-\mu_2)(-a_4e^1-a_3e^2+a_2e^3+a_1e^4)\\&+
			(\mu_1-\mu_3)(-c_3e^1+c_4e^2+c_1e^3-c_2e^4).
		\end{align*}
		Now, one has that
		\begin{align} \label{intproducts}
			\iota_{a^\sharp}\omega_3&=-a_4e^1-a_3e^2+a_2e^3+a_1e^4\\
			\iota_{c^\sharp}\omega_2&=-c_3e^1+c_4e^2+c_1e^3-c_2e^4, \notag
		\end{align}
		which means that 
		\begin{equation}\label{eq:derdzinski-first-order}
			d\mu_1=(\mu_1-\mu_2)\iota_{a^\sharp}\omega_3+(\mu_1-\mu_3)\iota_{c^\sharp}\omega_2;
		\end{equation}
        we point out that this equation also appears in 
        \cite[Section 2]{Wu} and it is a rewriting of
		Derdzi\'nski's first-order identity for the first eigenvalue \cite{Derdzinski}.
		
		We now apply \eqref{eq:derdzinski-first-order} to $A=\calW^+=fW^+$. By \eqref{eq:conformal-divergence}, $A$ is divergence-free, and its eigenvalues are
		\[
		\mu_1=1,
		\quad
		\mu_2=\rho,
		\quad
		\mu_3=-(1+\rho).
		\]
		Thus $d\mu_1=0$, and \eqref{eq:derdzinski-first-order} becomes
		\[
		0=(1-\rho)\iota_{a^\sharp}\omega_3+(2+\rho)\iota_{c^\sharp}\omega_2,
		\]
		which proves \eqref{eq:weighted-relation}. 
		Substituting 
		\eqref{intproducts} into \eqref{eq:weighted-relation} and comparing coefficients yields \eqref{eq:c-relations}.
	\end{proof}
	
	\begin{remark}\label{rem:weighted}
The weight $f$ is crucial for removing the terms involving $d\alpha$
from \eqref{eq:derdzinski-first-order}. Indeed,
$f=\alpha_h^{-1/3}$ is precisely the conformal factor for which
$\delta(fW^+)=0$ and $\mu_1=1$.
	\end{remark}
	
	We shall also use the Weitzenb\"ock formula for divergence-free sections of
	$S^2_0\Lam^+$ in the weighted conformal form exploited by LeBrun
	\cite[Equations (8)--(9)]{LeBrun2021}, derived from the standard
	identity for metrics with harmonic self-dual Weyl curvature due to Derdzi\'nski \cite{Derdzinski}. Since $\mathcal{W}^+=fW^+$ is divergence-free, we have 
    \[
    \delta^\nabla \mathcal{W}^+\equiv 0,
    \]
    where we view $\mathcal{W}^+$ as a $\Lam^+$-valued 2-form, 
    $\delta^\nabla=-*d^\nabla *$ and $d^\nabla$ is the exterior covariant derivative induced by the Levi-Civita connection on 
    $\Lam^+$: since $\mathcal{W}^+$ is self-dual, we also obtain 
    \[
    0=*(\delta^\nabla \mathcal{W}^+)=-*(*d^\nabla *(\mathcal{W}^+))=d^\nabla \mathcal{W}^+,
    \]
    which implies that $\Delta_H\mathcal{W}^+\equiv 0$, where 
    $\Delta_H=d^\nabla\delta^\nabla+\delta^\nabla d^\nabla$ is
    the Hodge Laplacian. Thus, by exploiting the classical
    Weitzenb\"{o}ck formula for trace-free, symmetric 
    bundle endomorphisms (see e.g. \cite[Proposition 4.2]{Bourguignon}), we obtain the well-known
	
	\begin{lemma}\label{lem:weighted-weitzenbock}
		Let $(M,g)$ be an oriented Riemannian four-manifold, and let $f$ be a
		positive smooth function such that
		\[
		\delta_g(fW_g^+)= 0\quad\text{on }M.
		\]
		Then
		\begin{equation}\label{eq:weighted-weitzenbock}
			0=\nabla^*\nabla(fW_g^+)+\frac{s_g}{2}fW_g^+
			-6fW_g^+\circ W_g^+ +2f\norm{W_g^+}^2I,
		\end{equation}
		where $I$ denotes the identity endomorphism of $\Lam^+$.
	\end{lemma}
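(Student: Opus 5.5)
The plan is to derive \eqref{eq:weighted-weitzenbock} from the general Weitzenb\"ock formula for the Hodge Laplacian on $\Lam^+$-valued 2-forms (equivalently, on sections of $\Lam^+\otimes\Lam^+$), specialized to the trace-free symmetric part. We set $A=fW_g^+$, a section of $S^2_0\Lam^+$. The first step is to record, as in the discussion preceding the statement, that $\delta_gA=0$ together with self-duality gives $\delta^\nabla A=0$ and $d^\nabla A=0$. Hence $\Delta_HA=0$ for $\Delta_H=d^\nabla\delta^\nabla+\delta^\nabla d^\nabla$.

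Next we invoke Bourguignon's Weitzenb\"ock formula (\cite[Proposition 4.2]{Bourguignon}) for bundle-valued 2-forms:
\[
\Delta_HA=\nabla^*\nabla A+\mathcal R(A),
\]
where $\mathcal R$ is a zeroth-order curvature term. It combines the Weitzenb\"ock curvature action on the form part with the curvature of the coefficient bundle $\Lam^+$. Since $A$ is self-dual in its form slot and takes values in $\Lam^+$, only the $\Lam^+$-block of the curvature operator enters. That block is $W^+_g+\frac{s_g}{12}I$, because the Einstein-free Ricci part maps $\Lam^+\to\Lam^-$ and drops out by type. So $\mathcal R(A)$ is a universal quadratic expression built from $A$, $W^+_g$ and $s_g$ on the three-dimensional space $\Lam^+$.

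The main computational step is to evaluate this expression algebraically. We use $\Lam^+\cong\mathfrak{so}(3)$ with the bracket coming from the quaternionic relations among $\omega_1,\omega_2,\omega_3$ in \eqref{eq:localeigenframe}. Both $A$ and $W^+$ act on the bundle through the adjoint action, so the curvature term takes the form
\[
\sum_r\Bigl[R(\omega_r),[R(\omega_r),A]\Bigr]
\]
(suitably normalized), which is a double commutator. For symmetric trace-free $A$ and $W$ on a 3-dimensional space, the identity
\[
\sum_r[\mathrm{ad}\,e_r\circ\,\cdot\,]
\]
turns the double commutator into a combination of $\operatorname{tr}(W)A$, $W\circ A+A\circ W$ and $\langle W,A\rangle I$, after projecting to $S^2_0$.

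Specializing to $A=fW^+$, the scalar-curvature part contributes $\frac{s_g}{2}fW^+$; the coefficient is fixed by checking on round $S^4$ or $\CP^2$, i.e.\ by the standard unweighted formula $\Delta W^+=\frac{s}{2}W^+-6W^+\circ W^++2|W^+|^2I$. The $W$-part contributes $-6fW^+\circ W^+ +2f|W^+|^2I$, and the trace-free projection explains the identity term. The step I expect to be the main obstacle is getting these constants, and the sign and normalization conventions for $|\cdot|$ and for the identification of $S^2\Lam^+$, right.

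To sidestep that bookkeeping, I would first note that the derivation is pointwise-algebraic in the curvature and linear in $A$. Therefore the formula for $fW^+$ is literally Derdzi\'nski's formula for harmonic $W^+$ with $A$ replacing $W^+$ in the linear slot. For the constants, I would cite Derdzi\'nski's identity and LeBrun \cite[Equations (8)--(9)]{LeBrun2021}, rather than recompute them from scratch.
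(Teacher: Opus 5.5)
Your proposal follows essentially the same route as the paper. The paper also observes that $\delta_g(fW_g^+)=0$ together with self-duality gives $\delta^\nabla(fW_g^+)=0=d^\nabla(fW_g^+)$, hence $\Delta_H(fW_g^+)=0$, and then invokes Bourguignon's Weitzenb\"ock formula with the constants of the unweighted Derdzi\'nski/LeBrun identity. Your observation that the zeroth-order curvature term is $C^\infty$-linear in $A$ and involves only $W_g^+ +\frac{s_g}{12}I$, so that $\mathcal R(fW_g^+)=f\,\mathcal R(W_g^+)$, is exactly what justifies this.

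The only blemish is your middle paragraph. The Weitzenb\"ock curvature term is linear, not quadratic, in the curvature, so the double commutator $\sum_r[R(\omega_r),[R(\omega_r),A]]$ has the wrong shape, and the displayed ``identity'' is incomplete. A check on $\CP^2$ alone also fixes only the ratios of the constants, not their overall scale. None of this affects correctness, because your final argument does not use it.
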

    \section{Proof of Theorem \ref{thm:half-harmonic-simple-top} and Corollary \ref{cor:det-version-hhw}}\label{sec:proof}

    \begin{proof}[Proof of Theorem \ref{thm:half-harmonic-simple-top}] By the discussion at the beginning
    of Section \ref{sec:confchange}, we know that the 
    subbundle $L\subset\Lam^+$ defined by the eigenspace of $\alpha_h$ 
    is smooth and, after passing, if necessary, to a double cover of $M$,
    we may assume that $L$ is trivial. As before, we set 
    \[
    g=\alpha_h^{2/3}h.
    \]
   From now on, unless specified otherwise, all geometric quantities are
computed with respect to $g$; hence we write
\[
\Lam^+=\Lam_g^+, \quad \alpha=\alpha_g, \quad W^+=W_g^+, \quad s=s_g.
\]
    Furthermore, we recall that, for every $\eta_1,\eta_2\in\Lam^+$, 
    \[
    \ip{W^+(\eta_1)}{\eta_2}=W^+(\eta_1,\eta_2)=\ip{W^+}{\eta_1\otimes\eta_2}.
    \]
    Choose a globally defined smooth section $\omega_1$ of $L$ such that
    \[
    W^+(\omega_1)=\alpha\omega_1, \quad
    \norm{\omega_1}^2=2;
    \]
    our first step is to derive a pointwise identity, also appearing implicitly in \cite{LeBrun2021} (see also \cite{LeBrun2015}),
    using the weighted Weitzenb\"{o}ck formula \eqref{eq:weighted-weitzenbock}.
    \begin{lemma}
		At every point of $M$, we have
\begin{equation}\label{eq:pointwise-before-hodge}
\begin{aligned}
0={}&\frac12\norm{\nabla\omega_1}^2
-\frac{2}{\alpha}W^+(\nabla_e\omega_1,\nabla^e\omega_1)\\
&+\frac32\ip{\omega_1}{(d+d^*)^2\omega_1}
+2\left(\frac{2\norm{W^+}^2}{\alpha}-3\alpha\right).
\end{aligned}
\end{equation}
 	\end{lemma}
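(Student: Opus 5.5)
The plan is to pair the weighted Weitzenb\"ock formula \eqref{eq:weighted-weitzenbock}, with $f=\alpha^{-1}$ (recall $f\alpha=1$), against $\omega_1\otimes\omega_1$. Then we exploit that the scalar $\calW^+(\omega_1,\omega_1)=\mu_1\norm{\omega_1}^2=2$ is constant, so its Laplacian vanishes. Throughout we work in a local orthonormal frame $\{e_i\}$, where $e$ denotes a frame index summed over in expressions like $\nabla_e\omega_1\otimes\nabla^e\omega_1$. Since all the final terms are tensorial and the identity is pointwise, no regularity issue arises: $\omega_1$ is a smooth global section after passing to the double cover.

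\textbf{Step 1 (curvature terms).} Using $W^+(\omega_1)=\alpha\omega_1$ and $\norm{\omega_1}^2=2$, the zeroth-order terms of \eqref{eq:weighted-weitzenbock} paired with $\omega_1\otimes\omega_1$ give
\[
\tfrac{s}{2}f\cdot 2\alpha-6f\cdot 2\alpha^2+2f\norm{W^+}^2\cdot 2=s-12\alpha+\frac{4\norm{W^+}^2}{\alpha}.
\]

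\textbf{Step 2 (the rough Laplacian).} Expanding $0=\nabla^*\nabla\bigl(\calW^+(\omega_1,\omega_1)\bigr)$ by the Leibniz rule yields
\[
0=\ip{\nabla^*\nabla\calW^+}{\omega_1\otimes\omega_1}-4\ip{\nabla_e\calW^+}{\nabla^e\omega_1\otimes\omega_1}+2\calW^+(\nabla^*\nabla\omega_1,\omega_1)-2\calW^+(\nabla_e\omega_1,\nabla^e\omega_1).
\]
The mixed term is handled by differentiating the eigen-equation $\calW^+(\omega_1)=\omega_1$, which holds because $\mu_1\equiv1$. This gives $(\nabla_e\calW^+)(\omega_1)=\nabla_e\omega_1-\calW^+(\nabla_e\omega_1)$, so the mixed term becomes $\norm{\nabla\omega_1}^2-f\,W^+(\nabla_e\omega_1,\nabla^e\omega_1)$. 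Self-adjointness together with $\calW^+\omega_1=\omega_1$ gives $\calW^+(\nabla^*\nabla\omega_1,\omega_1)=\ip{\nabla^*\nabla\omega_1}{\omega_1}$. Moreover, $\nabla^*\nabla\norm{\omega_1}^2=0$ gives $\ip{\omega_1}{\nabla^*\nabla\omega_1}=-\norm{\nabla\omega_1}^2$ with the convention $\nabla^*\nabla=-\nabla^e\nabla_e$; the sign will be fixed by this identity.

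\textbf{Step 3 (Hodge Laplacian).} Next apply the Weitzenb\"ock formula for self-dual 2-forms, $(d+d^*)^2=\nabla^*\nabla-2W^++\frac s3$. This converts part of $\ip{\omega_1}{\nabla^*\nabla\omega_1}$ into $\ip{\omega_1}{(d+d^*)^2\omega_1}+4\alpha-\frac{2s}{3}$.

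\textbf{Step 4 (combining).} Choose how to split $\ip{\omega_1}{\nabla^*\nabla\omega_1}$ between $-\norm{\nabla\omega_1}^2$ and the Hodge form so that the scalar curvature $s$ cancels exactly against the $s$ from Step~1. This forces the coefficient $\frac32$ in front of $\ip{\omega_1}{(d+d^*)^2\omega_1}$, producing the extra $\frac32\cdot(4\alpha-\frac{2s}{3})=6\alpha-s$. Collecting everything and dividing by the overall factor should give \eqref{eq:pointwise-before-hodge}: the term $\frac12\norm{\nabla\omega_1}^2$, the term $-\frac2\alpha W^+(\nabla_e\omega_1,\nabla^e\omega_1)$, and the constant $2\bigl(\frac{2\norm{W^+}^2}{\alpha}-3\alpha\bigr)$.

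The main obstacle is bookkeeping rather than ideas: the sign convention of $\nabla^*\nabla$ and the factor-$\frac12$ normalization relating the endomorphism $W^+$ to the pairing $\ip{W^+}{\eta_1\otimes\eta_2}$ on $\Lam^2$ with $\norm{\omega_1}^2=2$. I would first fix these by checking the identity on a K\"ahler metric with $\omega_1$ parallel, where the derivative terms vanish, $(d+d^*)^2\omega_1=0$, and $\norm{W^+}^2=\frac32\alpha^2$, so that $\frac{2\norm{W^+}^2}{\alpha}-3\alpha=0$ consistently.
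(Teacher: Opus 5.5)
Your route is the paper's: contract \eqref{eq:weighted-weitzenbock} with $\omega_1\otimes\omega_1$, and use that $\calW^+(\omega_1,\omega_1)=2$ is constant. Then differentiate $\calW^+(\omega_1)=\omega_1$ to handle the mixed term, use $\nabla^*\nabla\norm{\omega_1}^2=0$, and apply the Hodge Weitzenb\"ock formula with coefficient $\frac32$ chosen to cancel $s$. Your Step 1, your Leibniz expansion in Step 2, and your treatment of the mixed term are all correct.

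However, the one identity you state explicitly has the wrong sign. With $\nabla^*\nabla=-\nabla^e\nabla_e$ and $\norm{\omega_1}$ constant, $0=\nabla^*\nabla\norm{\omega_1}^2=2\ip{\omega_1}{\nabla^*\nabla\omega_1}-2\norm{\nabla\omega_1}^2$. Hence $\ip{\omega_1}{\nabla^*\nabla\omega_1}=+\norm{\nabla\omega_1}^2$, as it must be, since $\nabla^*\nabla$ is nonnegative.

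This sign is exactly what fixes the coefficient of $\norm{\nabla\omega_1}^2$. Your Step 2 gives $\ip{\nabla^*\nabla\calW^+}{\omega_1\otimes\omega_1}=4\norm{\nabla\omega_1}^2-\frac2\alpha W^+(\nabla_e\omega_1,\nabla^e\omega_1)-2\ip{\omega_1}{\nabla^*\nabla\omega_1}$. The $s$-cancellation forces total coefficient $\frac32$ on $\ip{\omega_1}{\nabla^*\nabla\omega_1}$, so you must trade $\frac72$ units between $\norm{\nabla\omega_1}^2$ and $\ip{\omega_1}{\nabla^*\nabla\omega_1}$. With your sign this leaves $4+\frac72=\frac{15}{2}$ in front of $\norm{\nabla\omega_1}^2$. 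With the correct sign it leaves $4-\frac72=\frac12$. So the claim that collecting everything ``should give'' the target is not derived from what you wrote.

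Your proposed K\"ahler calibration cannot detect the error. There $\nabla\omega_1=0$, so every derivative term vanishes and only the zeroth-order constant is tested.

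With the correct sign, Step 2 reads $\ip{\nabla^*\nabla\calW^+}{\omega_1\otimes\omega_1}=2\ip{\omega_1}{\nabla^*\nabla\omega_1}-\frac2\alpha W^+(\nabla_e\omega_1,\nabla^e\omega_1)$, which is the paper's \eqref{eq:prelimweitzen}. Writing $2\ip{\omega_1}{\nabla^*\nabla\omega_1}=\frac12\norm{\nabla\omega_1}^2+\frac32\ip{\omega_1}{\nabla^*\nabla\omega_1}$ and substituting the Hodge formula, with $\frac32(4\alpha-\frac{2s}{3})=6\alpha-s$, gives \eqref{eq:pointwise-before-hodge} directly. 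There is no overall factor to divide out. Your worry about a factor-$\frac12$ normalization is also unfounded: with $\ip{W^+(\eta_1)}{\eta_2}=\ip{W^+}{\eta_1\otimes\eta_2}$ and $\norm{\omega_1}^2=2$, your Step 1 is already correct as stated.
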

    \begin{proof}
		We claim that, at every $p\in M$, 
		\begin{equation} \label{eq:prelimweitzen}
					\ip{\nabla^*\nabla(fW^+)}{\omega_1\otimes\omega_1}
					=
					2\ip{\omega_1}{\nabla^*\nabla\omega_1}
					-\frac{2}{\alpha}W^+(\nabla_e\omega_1,\nabla^e\omega_1).
				\end{equation}
		We observe that, since $\norm{\omega_1}$ is constant and $fW^+(\omega_1)=\omega_1$, 
		\[
		\nabla(fW^+(\omega_1,\omega_1))=
		\nabla\left(\ip{fW^+(\omega_1)}{\omega_1}\right)=
		\nabla\ip{f\alpha\omega_1}{\omega_1}=
		\nabla\norm{\omega_1}^2=0;
		\]
		now, recalling that 
		$\nabla^*\nabla=-\nabla_e\nabla^e$, we use the standard formula for covariant
		derivatives, together with the fact that
		$fW^+$ and $\nabla(fW^+)$ are self-adjoint, 
		to obtain
		\begin{align} \label{eq:first-weitzen}
		\ip{\nabla^*\nabla(fW^+)}{\omega_1\otimes\omega_1}&=
		4\ip{\nabla_e (fW^+)(\omega_1)}{\nabla^e\omega_1}\\
		&+2\ip{fW^+(\nabla_e\omega_1)}{\nabla^e\omega_1}
		-2\ip{\omega_1}{\nabla^*\nabla\omega_1}. \notag
		\end{align}
		We use again the fact that $fW^+(\omega_1)=\omega_1$: if
		we differentiate this equation, we obtain
		\[
		\nabla_e\omega_1=\nabla_e(fW^+(\omega_1))=
		\nabla_e(fW^+)(\omega_1)+fW^+(\nabla_e\omega_1)
		\]
		and, if we contract the previous identity with 
		$\nabla^e\omega_1$, we get
		\[
		\ip{\nabla_e (fW^+)(\omega_1)}{\nabla^e\omega_1}=
		\norm{\nabla\omega_1}^2-\ip{fW^+(\nabla_e\omega_1)}{\nabla^e\omega_1}.
		\]
		Furthermore, by the usual Bochner formula for 
		smooth tensors,
		\[
		0=\nabla^*\nabla\norm{\omega_1}^2=
		2\ip{\omega_1}{\nabla^*\nabla\omega_1}
		-2\norm{\nabla\omega_1}^2 \quad
		\Longrightarrow \quad
		\norm{\nabla\omega_1}^2=\ip{\omega_1}{\nabla^*\nabla\omega_1}.
		\]
	Substituting these identities into \eqref{eq:first-weitzen} and using
$f\alpha=1$ proves the claim. Contracting
\eqref{eq:weighted-weitzenbock} with $\omega_1\otimes\omega_1$ and
using again $f\alpha=1$, $\norm{\omega_1}^2=2$, and
\eqref{eq:prelimweitzen}, we obtain
		\begin{align*}
			0&=2\ip{\omega_1}{\nabla^*\nabla\omega_1}
			-\dfrac{2}{\alpha}W^+(\nabla_e\omega_1,\nabla^e\omega_1)+s-12\alpha+\dfrac{4}{\alpha}\norm{W^+}^2\\
			&=\dfrac{1}{2}\norm{\nabla\omega_1}^2+
			\dfrac{3}{2}\ip{\omega_1}{\nabla^*\nabla\omega_1}
			-\dfrac{2}{\alpha}W^+(\nabla_e\omega_1,\nabla^e\omega_1)+s-12\alpha+\dfrac{4}{\alpha}\norm{W^+}^2.
		\end{align*}
		Finally, by the well-known Weitzenb\"ock formula for self-dual two-forms (see e.g. \cite[Equation (7)]{LeBrun2021})
		\[
		(d+d^*)^2\omega_1
		=
		\nabla^*\nabla\omega_1-2W^+(\omega_1)+\frac{s}{3}\omega_1,
		\]
		substitution gives \eqref{eq:pointwise-before-hodge}.
	\end{proof} 
    We now integrate \eqref{eq:pointwise-before-hodge} over $M$:
    first, we
    point out that, since $*\omega_1=\omega_1$ and $d^{*}\omega_1=-*d*\omega_1$,
	using the properties of the Hodge operator we obtain
	\begin{equation} \label{eq:Hodge-lapl-self-dual}
		\int_M\ip{\omega_1}{(d+d^*)^2\omega_1}\dd\mu_g
		=2\int_M\norm{d\omega_1}^2\dd\mu_g.
	\end{equation}
    Then, setting again 
    \[
    \rho=\dfrac{\beta}{\alpha}=\dfrac{\beta_h}{\alpha_h}
    \]
    and using \eqref{eq:pointwise-before-hodge} and \eqref{eq:Hodge-lapl-self-dual}, together with 
    the fact that $\norm{W^+}^2=2\alpha^2(1+\rho+\rho^2)$, we get the equality
    \begin{equation}\label{eq:domain-identity}
		\int_M
		\left[
		\mathcal P
		+
		8\alpha\left(\rho+\frac12\right)^2
		\right]\dd\mu_g=0,
	\end{equation}
    where
	\begin{equation} \label{eq:P-identity-1}
	\mathcal P
	:=
	3\norm{d\omega_1}^2
	+\frac12\norm{\nabla\omega_1}^2
	-\frac{2}{\alpha}W^+(\nabla_e\omega_1,\nabla^e\omega_1).
	\end{equation}
 The crucial new point is that the weighted Derdzi\'nski relation in
Proposition \ref{prop:weighted-derdzinski} allows us to evaluate the
indefinite gradient term $\mathcal P$ exactly, rather than estimate it.
Let $M_W\subset M$ be the regular spectral set for $W^+$, equivalently
for $\calW^+=fW^+$. Since $\alpha>\beta$ and $\alpha>0$ by hypothesis,
the discussion in Section \ref{sec:confchange} implies that $M_W$ is an open dense subset of $M$ and
    that Proposition \ref{prop:weighted-derdzinski} holds
    locally on the whole $M_W$.
    
    Indeed, we can complete $\omega_1$ to the
    oriented orthogonal frame $\{\omega_1, \omega_2, \omega_3\}$ of $\Lam^+$ defined in 
    \eqref{eq:localeigenframe} locally on $M_W$: the covariant derivatives
    of the forms $\omega_i$ are then given by 
    \eqref{eq:conneigenframe}, which 
    immediately implies that
    \[
    \norm{\nabla\omega_1}^2=2\left(\norm{a}^2+\norm{c}^2\right)
    \]
    on $M_W$. 
    A direct computation gives 
    \begin{align*}
    d\omega_1&=a\wedge\omega_2-c\wedge\omega_3\\
    &=-(a_2+c_1)e^{123}+(-a_1+c_2)e^{124}+(a_4+c_3)e^{134}+
    (a_3-c_4)e^{234},
    \end{align*}
    where $e^{ijk}=e^i\wedge e^j\wedge e^k$, which implies that
    \[
    \norm{d\omega_1}^2=(a_1-c_2)^2+(a_2+c_1)^2+(a_3-c_4)^2+
    (a_4+c_3)^2.
    \]
    Finally, using the first equation in \eqref{eq:conneigenframe} again, we can compute
    \[
    W^+(\nabla_e\omega_1,\nabla^e\omega_1)=
    2\left(\beta\norm{a}^2+\gamma\norm{c}^2\right);
    \]
    by
    \eqref{eq:c-relations}, locally on $M_W$ we have
        \begin{equation} \label{eq:local-identities-omega1}
        \begin{cases}
        \norm{d\omega_1}^2=(1-k)^2\norm{a}^2,\\
    \norm{\nabla\omega_1}^2=2(1+k^2)\norm{a}^2,\\
    W^+(\nabla_e\omega_1,\nabla^e\omega_1)=2\alpha\left(\rho-(1+\rho)k^2\right)\norm{a}^2,
    \end{cases}
    \end{equation}
    where 
    \[
    k=\dfrac{1-\rho}{2+\rho}.
    \]
    Now, we can express $\mathcal{P}$ using \eqref{eq:local-identities-omega1} as
    \begin{align*}
        \mathcal{P}&=[3(1-k)^2+(1+k^2)-4(\rho-(1+\rho)k^2)]\norm{a}^2\\
        &=[4(1-\rho)-6k+4(2+\rho)k^2]\norm{a}^2\\
        &=\dfrac{6(1-\rho)}{2+\rho}\norm{a}^2.
    \end{align*}
    Moreover, \eqref{eq:local-identities-omega1} gives
    \[
    \norm{\nabla\omega_1}^2=2(1+k^2)\norm{a}^2=
    \dfrac{2(2\rho^2+2\rho+5)}{(2+\rho)^2}\norm{a}^2,
    \]
 and hence
\begin{equation}\label{eq:P-identity-2}
\mathcal{P}=\mathcal{Q}_\rho\norm{\nabla\omega_1}^2,
\quad
\mathcal{Q}_\rho:=\dfrac{3(1-\rho)(2+\rho)}{2\rho^2+2\rho+5}.
\end{equation}
This identity holds on $M_W$.

By \eqref{eq:P-identity-1}, $\mathcal P$ is smooth on $M$, since
$\omega_1$ is a globally defined smooth eigenform. Moreover, $\rho$ is
continuous on $M$, so
$\mathcal Q_\rho\norm{\nabla\omega_1}^2$ is globally defined and
continuous. These two functions coincide on the open dense set $M_W$;
therefore,
\[
\mathcal{P}=\mathcal{Q}_\rho\norm{\nabla\omega_1}^2
\quad\text{on }M.
\]
Consequently, \eqref{eq:domain-identity} becomes
    \begin{equation} \label{eq:fundamental-identity}
    \int_M\left[\dfrac{3(1-\rho)(2+\rho)}{2\rho^2+2\rho+5}\norm{\nabla\omega_1}^2+8\alpha\left(\rho+\dfrac{1}{2}\right)^2\right]\dd\mu_g=0.
    \end{equation}
 Since $-\frac12\leq\rho<1$, both terms in the integrand are
nonnegative and hence vanish identically. Since $\alpha>0$ and
$\rho<1$ on $M$, we conclude that
    \[
    \nabla\omega_1= 0,\quad
    \rho=-\dfrac{1}{2} \quad\text{on }M,
    \]
    which means that $g$ is a K\"{a}hler metric on $M$
    \cite[Proposition 5]{Derdzinski}, whose K\"{a}hler form is $\omega_1$ and
    whose self-dual Weyl operator $W^+$ has eigenvalues given by
    (see \cite[Proposition 2]{Derdzinski})
    \begin{equation} \label{eq:eigenvalues-Kahler}
    \alpha=\dfrac{s}{6}, \quad \beta=\gamma= -\dfrac{s}{12}.
    \end{equation}
    Furthermore, since $\alpha>0$ on $M$, by \eqref{eq:eigenvalues-Kahler} we obtain that
    $g$ is a K\"{a}hler metric with positive scalar curvature
    on $M$.
    
Conversely, let $(M,g,J)$ be a K\"ahler surface with $s_g>0$ and set
$h=s_g^{-2}g$. By \cite[Proposition 2]{Derdzinski},
$s_g^{-1}W_g^+$ is parallel, and hence
$\delta_g(s_g^{-1}W_g^+)=0$. Then, 
\eqref{eq:conformal-divergence} implies
$\delta_hW_h^+=0$, while \eqref{eq:eigenvalues-Kahler} gives
$\alpha_h>0$ and $\alpha_h>\beta_h$. This concludes the proof of
Theorem \ref{thm:half-harmonic-simple-top}.
    \end{proof}

    \begin{proof}[Proof of Corollary \ref{cor:det-version-hhw}]
		The strict inequality rules out zeros of $W^+_h$. Hence $W^+_h$ is nowhere zero and $\alpha_h>0$, $-1/2\leq\rho\leq 1$, and
		$\gamma_h=-(1+\rho)\alpha_h$. Hence, we immediately obtain
		\[
		|W_h^+|^2=2\alpha_h^2(1+\rho+\rho^2),
		\quad
		\det(W_h^+)=-\rho(1+\rho)\alpha_h^3,
		\]
		which imply that
        \[
		\frac{\det(W_h^+)}{|W_h^+|^3}=\mathcal{F}(\rho),
		\]
        where
		\[
		\mathcal{F}(\rho):=
		-\frac{\rho(1+\rho)}{2^{3/2}(1+\rho+\rho^2)^{3/2}}.
		\]
A direct computation gives
\[
\mathcal{F}'(\rho)=
\frac{(\rho-1)(\rho+2)\left(\rho+\frac12\right)}
{2^{3/2}(1+\rho+\rho^2)^{5/2}}
\leq0.
\]
Thus, $\mathcal{F}$ is non-increasing on $[-1/2,1]$, and its minimum
is attained at $\rho=1$, where
\[
\mathcal{F}(1)=-\frac1{3\sqrt6}.
\]
Since $\mathcal{F}(\rho)>\mathcal{F}(1)$ by assumption, we have
$\rho<1$ on $M$, and the conclusion follows from Theorem
\ref{thm:half-harmonic-simple-top}.   
    \end{proof}    

    \section{Compact Einstein manifolds} \label{sec:closed-einstein}
    
	The main aim of this section is to show 
    how the method used to prove Theorem \ref{thm:half-harmonic-simple-top} can be adapted to Einstein four-manifolds, without the non-vanishing assumption on 
    $W_h^+$. We prove that, if $W_h^+$ is not identically zero and its eigenvalues satisfy  
    \begin{equation}\label{eq:global-gap}
		\beta_h\leq\rho_0\alpha_h
		\quad\text{on }M,
		\quad
		\rho_0<1,
	\end{equation}
    then $W_h^+\neq 0$ at every point of $M$. The idea is to work on the set 
    \[
	X:=M\setminus N,
	\quad
	N:=\{W_h^+=0\}
	\]
    and, by constructing suitable cutoff functions satisfying a
zero-capacity condition, to prove that $g=\alpha_h^{2/3}h$ is K\"ahler
on $X$. We then use Derdzi\'nski's rigidity result for the zero set of
$W_h^+$ on Einstein four-manifolds
\cite[Proposition 5(iv)]{Derdzinski}.

    We first work under the weaker assumption $\delta_h W_h^+= 0$: 
    by \eqref{eq:global-gap}, on $X$ we have that $\alpha_h>\beta_h$ and, therefore, $\alpha_h>0$, which means that, using again Proposition \ref{prop:weighted-derdzinski}, 
    we can do the same algebraic computations we performed at the end of the previous section in order to obtain that \eqref{eq:pointwise-before-hodge} and 
    \eqref{eq:P-identity-2} hold on $X\cap M_W$ and, therefore, 
    on $X$ (recall that $X\cap M_W$ is an open dense subset of $X$). Since $X$ is not a closed set, we have to modify 
    our argument in order to use integration and obtain an analogue of 
    \eqref{eq:fundamental-identity}. 
   We begin with a general result for manifolds with harmonic self-dual
Weyl curvature.

    \begin{proposition}\label{prop:capacity-form}
		Let $(M,h)$ be an oriented four-manifold, not necessarily
		complete, such that $\delta_h W_h^+= 0$ and let
		\[
		N=\{W_h^+=0\},
		\quad
		X=M\setminus N .
		\]
		Assume that $W_h^+\not\equiv0$ and that, on $X$,
		\[
		\beta_h\leq\rho_0\alpha_h,
		\quad
		\rho_0<1 .
		\]
		  Let $g=\alpha_h^{2/3}h$. Suppose that there exist cutoff functions
		$\chi_j\in \operatorname{Lip}_c(X)$, $0\leq\chi_j\leq1$, such that
		$\chi_j\to1$ locally uniformly on $X$ and
		\begin{equation}\label{eq:abstract-capacity-condition}
			\int_X |d\chi_j|_g^2\,\dd\mu_g\longrightarrow0 .
		\end{equation}
		Then, after at worst passing to a double cover of $X$, 
       there exists a smooth eigenform
$\omega_1$, normalized by $|\omega_1|_g^2=2$, such that
\[
\nabla^g\omega_1=0,
\quad
\beta_h=\gamma_h=-\frac12\alpha_h
\quad\text{on }X
\]
and $g$ is K\"ahler with positive scalar curvature on $X$.
	\end{proposition}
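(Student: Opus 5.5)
We work on $X$, where $\alpha_h>\beta_h$ and $\alpha_h>0$, so $g=\alpha_h^{2/3}h$ is smooth there. After passing to the double cover $\widehat X$ determined by the line bundle $L$ we have a global smooth eigenform $\omega_1$ with $|\omega_1|_g^2=2$. As observed before the statement, the pointwise identity \eqref{eq:pointwise-before-hodge} holds on all of $X$, and so does $\mathcal P=\mathcal Q_\rho|\nabla\omega_1|^2$. Combining them with $|W^+|^2=2\alpha^2(1+\rho+\rho^2)$ gives the pointwise identity
\begin{equation*}
0=\mathcal Q_\rho|\nabla\omega_1|^2+8\alpha\left(\rho+\tfrac12\right)^2+\tfrac32\left(\langle\omega_1,(d+d^*)^2\omega_1\rangle-2|d\omega_1|^2\right).
\end{equation*}
Here we used $3|d\omega_1|^2+\frac12|\nabla\omega_1|^2-\frac2\alpha W^+(\cdot,\cdot)=\mathcal P$. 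On compact $M$ the last bracket integrates to zero; the plan is to recover this with the cutoffs $\chi_j$.

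Multiply by $\chi_j^2$ and integrate over $X$. This is legitimate since $\chi_j$ is Lipschitz with compact support in $X$, and on a double cover we pull $\chi_j$ back. For self-dual $\omega_1$ we have $d^*\omega_1=-*d\omega_1$, hence $|d^*\omega_1|=|d\omega_1|$. Integrating by parts,
\begin{equation*}
\int\chi_j^2\langle\omega_1,(dd^*+d^*d)\omega_1\rangle=\int\chi_j^2\left(|d\omega_1|^2+|d^*\omega_1|^2\right)+\int\left\langle 2\chi_j\,d\chi_j\wedge\omega_1,d\omega_1\right\rangle-\int\left\langle 2\chi_j\,\iota_{d\chi_j^\sharp}\omega_1,d^*\omega_1\right\rangle .
\end{equation*}
The bracket therefore contributes only boundary terms bounded by $C\int\chi_j|d\chi_j|\,|\omega_1|\,|d\omega_1|$. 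Since $|\omega_1|=\sqrt2$ and $|d\omega_1|\le C|\nabla\omega_1|$, Cauchy--Schwarz with a small parameter $\varepsilon$ bounds them by
\begin{equation*}
\varepsilon\int\chi_j^2|\nabla\omega_1|^2+C_\varepsilon\int|d\chi_j|^2 .
\end{equation*}

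The key point is that the coefficient $\mathcal Q_\rho$ is uniformly positive. Since $-\frac12\le\rho\le\rho_0<1$ on $X$, we have $\mathcal Q_\rho\ge q_0>0$, because $(1-\rho)\ge 1-\rho_0$ and the denominator $2\rho^2+2\rho+5$ is bounded. This is where the uniform gap is needed instead of the pointwise one. Choosing $\varepsilon=q_0/2$ gives
\begin{equation*}
\int_X\chi_j^2\left[\tfrac{q_0}{2}|\nabla\omega_1|^2+8\alpha\left(\rho+\tfrac12\right)^2\right]d\mu_g\le C\int_X|d\chi_j|_g^2\,d\mu_g\to0 .
\end{equation*}
Since $\chi_j\to1$ locally uniformly, Fatou's lemma gives $\nabla\omega_1=0$ and $\rho=-\frac12$ on $X$. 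So $\omega_1$ is a parallel, hence Kähler, form for $g$ on $X$. By \cite[Proposition 2]{Derdzinski} we get $\alpha=s/6>0$, and $\beta_h=\gamma_h=-\frac12\alpha_h$.

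The main obstacle I expect is the integration-by-parts step: making sure all boundary terms involve only $d\chi_j$ multiplied by first derivatives of $\omega_1$, and not $|W^+|$ or $\alpha$, which may blow up or degenerate near $N$. A second point is that the $\chi_j$ must be Lipschitz with respect to $g$ on the cover. The identity $\int\langle\omega_1,(d+d^*)^2\omega_1\rangle=2\int|d\omega_1|^2$ must be reproduced in this localized form with the self-duality cancellation intact. If a term with second derivatives survives, I would instead absorb it using the algebraic identities \eqref{eq:local-identities-omega1} for $d\omega_1$ in terms of $a$.
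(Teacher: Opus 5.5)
Your proposal is correct and matches the paper's proof: you multiply the pointwise identity \eqref{eq:pointwise-before-hodge} (with $\mathcal P=\mathcal Q_\rho|\nabla\omega_1|^2$) by $\chi_j^2$ and integrate by parts so that only terms linear in $d\chi_j$ remain. You then absorb these with Young's inequality using the uniform bound $\mathcal Q_\rho\geq q_0>0$, and conclude with Fatou's lemma. The only cosmetic difference is that the paper uses self-duality to merge the two boundary terms into the single term $4\int_X\chi_j\langle d\chi_j\wedge\omega_1,d\omega_1\rangle\,d\mu_g$, whereas you bound the $d$ and $d^*$ boundary terms separately; this changes nothing.
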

	
	\begin{proof}
    Since $\delta_hW_h^+=0$,
    by \eqref{eq:conformal-divergence} we have
    $\delta_g(\alpha_h^{-1/3}W_g^+)=0$: hence, since $\rho$ is continuous on $X$, \eqref{eq:pointwise-before-hodge} and \eqref{eq:P-identity-2} hold on $X$. 
    Let $L\subset\restr{\Lam^+}{X}$ be the line subbundle associated with the eigenspace of $\alpha_h$. If $L$ is not orientable, we pass to its standard double cover; the capacity condition is unchanged up to the degree of the cover. Hence we may assume that $L$ is orientable and choose
    a smooth eigenform $\omega_1$ associated to $\alpha_h$ normalized by $\norm{\omega_1}_g^2=2$. If the line subbundle $L$ associated to the eigenspace of $\alpha_h$ is not orientable, 
 We recall that, since $\omega_1$ is self-dual, 
        \[
        d^{*}(\chi_j^2\omega_1)=-*d(\chi_j^2\omega_1)=
        \chi_j^2d^*\omega_1-2\chi_j*(d\chi_j\wedge\omega_1)
        \]
        almost everywhere on $X$.
        Hence, since $\norm{d^*\omega_1}=\norm{d\omega_1}$
        and $\norm{\omega_1}^2=2$, multiplying 
        \eqref{eq:pointwise-before-hodge} by $\chi_j^2$ and
        integrating by parts we obtain
		\begin{align} \label{eq:cutoff-hodge-error}
			\int_X\chi_j^2
			\left\langle\omega_1,(d+d^*)^2\omega_1\right\rangle\,\dd\mu_g =& 
			2\int_X\chi_j^2|d\omega_1|^2\,\dd\mu_g
			\\&+
			4\int_X\chi_j
			\left\langle d\chi_j\wedge\omega_1,d\omega_1\right\rangle
			\,\dd\mu_g.\notag			
		\end{align}
 Using again $\norm{\omega_1}^2=2$ and
$\norm{d\omega_1}\leq C\norm{\nabla\omega_1}$, the Cauchy--Schwarz
and Young inequalities give
\begin{align}
\left|\int_X\chi_j
\left\langle d\chi_j\wedge\omega_1,d\omega_1\right\rangle
\,\dd\mu_g\right|&\leq            C\int_X\norm{\chi_j}\norm{d\chi_j}\norm{\nabla\omega_1}\dd\mu_g\notag\\ 
            &\leq \varepsilon
            \int_X\chi_j^2\norm{\nabla\omega_1}^2\dd\mu_g+
            C_\varepsilon\int_X\norm{d\chi_j}^2\dd\mu_g
            \label{eq:cutoff-cross-term-estimate}
   \end{align}
   for every $\varepsilon>0$ and some $C_{\varepsilon}>0$.
        
		Combining \eqref{eq:pointwise-before-hodge}, \eqref{eq:P-identity-2}, and
		\eqref{eq:cutoff-hodge-error}, we obtain
		\begin{equation} \label{eq:cutoff-final-before-limit}
			0
			=
			\int_X\chi_j^2
			\left[
			\mathcal{Q}_{\rho}|\nabla\omega_1|^2
			+
			8\alpha\left(\rho+\frac12\right)^2
			\right]\dd\mu_g+
			6\int_X\chi_j
			\left\langle d\chi_j\wedge\omega_1,d\omega_1\right\rangle
			\dd\mu_g,
		\end{equation}
		where $\mathcal{Q}_\rho$ is defined in \eqref{eq:P-identity-2}: observe that, by the pinching assumption, 
        since $\mathcal{Q}_\rho$ is continuous and positive on
        $[-1/2,\rho_0]$, there exists a constant $q_0>0$, depending only on $\rho_0$, such that 
        \begin{equation}\label{eq:A-lower-bound-domain}
		\mathcal{Q}_\rho\geq q_0>0
		\quad\text{on }X.
	\end{equation}
        Set
		\[
		I_j:=
		\int_X\chi_j^2
		\left[
		\mathcal{Q}_\rho|\nabla\omega_1|^2
		+
		8\alpha\left(\rho+\frac12\right)^2
		\right]\dd\mu_g.
		\]
		  Using \eqref{eq:cutoff-cross-term-estimate}, \eqref{eq:cutoff-final-before-limit} and \eqref{eq:A-lower-bound-domain}, we get
		\[
		I_j
		\leq
		6\varepsilon\int_X\chi_j^2|\nabla\omega_1|^2\,\dd\mu_g
		+
		6C_\varepsilon\int_X|d\chi_j|^2\,\dd\mu_g
		\leq
		\frac{6\varepsilon}{q_0}I_j
		+
		6C_\varepsilon\int_X|d\chi_j|^2\,\dd\mu_g.
		\]
		Now, taking $\varepsilon>0$ small enough such that $6\varepsilon/q_0<1$, we obtain
		\[
		0\leq I_j\leq C\int_X|d\chi_j|^2\,\dd\mu_g,
		\]
        for some $C>0$. 
		By \eqref{eq:abstract-capacity-condition}, $I_j\to0$ and, since
		$\chi_j\to1$ locally uniformly on $X$ and the integrand of $I_j$ is nonnegative,
		Fatou's lemma yields
		\begin{equation}\label{eq:final-integral}
			0=
			\int_X
			\left[
			\mathcal{Q}_\rho|\nabla\omega_1|^2
			+
			8\alpha\left(\rho+\frac12\right)^2
			\right]\dd\mu_g .
		\end{equation}
	As in \eqref{eq:fundamental-identity}, both terms in the integrand
vanish identically. Hence
\[
\nabla^g\omega_1=0,
\quad
\beta_h=\gamma_h=-\frac12\alpha_h
\quad\text{on }X.
\]
Thus $g$ is K\"ahler on $X$ with $s_g>0$, which concludes the proof. 
	\end{proof}
Now, we are ready to prove our main result on compact Einstein
four-manifolds. 
\begin{proof}[Proof of Theorem \ref{thm:main}]
	We need to construct cutoff functions that satisfy the capacity hypothesis in
	Proposition \ref{prop:capacity-form} for the zero set of $W_h^+$. 
    
    If $N=\varnothing$, Proposition \ref{prop:capacity-form} applies with
$\chi_j\equiv1$ and shows that, after possibly passing to a double cover of $M$, $g=\alpha_h^{2/3}h$ is K\"ahler with positive
scalar curvature.

Henceforth suppose that $N\neq\varnothing$. The construction of the
cutoff functions relies on the fact that, under our hypotheses, the
zero set of $W_h^+$ cannot be too large.
    Since $(M,h)$ is Einstein, $W_h^+$ satisfies the second Bianchi identity and it is divergence-free: in particular, since we are assuming that $W_h^+\not\equiv 0$, 
    $W_h^+$ is a non-trivial solution of the first-order elliptic system (see e.g. \cite{Polombo})
    \[
    \mathcal{D}W_h^+=0,
    \]
    where $\mathcal{D}:=d^\nabla+\delta^\nabla$ is the Hodge--Dirac operator on $\Lam^+$-valued 2-forms (see also 
    the discussion before Lemma \ref{lem:weighted-weitzenbock}). 
    Thus, we can apply a result by B\"ar
	\cite[Corollary 1]{Baer1999} and conclude that $N$
	is countably $2$-rectifiable and has finite $\mathcal{H}^2_h$-measure: in particular, $N$ has codimension at least 2.
	
	We now exploit the following consequence of the definition of Hausdorff measure. By compactness of $N$, for every $\delta>0$, there are
	finitely many geodesic balls
	$B_h(p_i,r_i)$, $p_i\in N$, and $0<r_i<\delta$
	such that
	\begin{equation}\label{eq:hausdorff-content-cover}
		N\subset\bigcup_i B_h(p_i,r_i),
		\quad
		\sum_i r_i^2\leq C_N,
	\end{equation}
where $C_N$ is independent of $\delta$. Indeed, finite Hausdorff
measure implies uniformly bounded Hausdorff $\delta$-content for every
$\delta>0$; see, for instance,
\cite[Section 2.1]{EvansGariepy}. Since $W_h^+$ is smooth and vanishes on $N$, $\alpha_h$ is a Lipschitz function in a neighborhood of $N$: hence, for $x$ sufficiently close to $N$,
	\begin{equation*}
		\alpha_h(x)\leq C\,d_h(x,N)
	\end{equation*}
    for some uniform constant $C>0$.
	In particular, choosing $\delta$ sufficiently small, on each ball $B_h(p_i,2r_i)$ we have
	\begin{equation}\label{eq:alpha-ball}
		\alpha_h\leq C r_i.
	\end{equation}
For every Lipschitz function $\varphi$, one has
\begin{equation}\label{eq:conformal-gradient-measure}
|d\varphi|_g^2\,\dd\mu_g
=
\alpha_h^{2/3}|d\varphi|_h^2\,\dd\mu_h
\end{equation}
almost everywhere on $X$.
    We want to construct suitable cutoff functions, adapted to the chosen cover of $N$, in order to apply Proposition 
    \ref{prop:capacity-form}. We choose $0<\delta<\mathrm{inj}_h(M)/2$ and, for every $i$,
	we can define $\theta_i\in C^\infty(M)$ such that 
	\[
	0\leq\theta_i\leq 1, \quad \theta_i=0\quad\text{on }B_h(p_i,r_i),
	\quad
	\theta_i=1\quad\text{on }M\setminus B_h(p_i,2r_i),
	\]
	with
	\[
	|d\theta_i|^2_h\leq C r_i^{-2},
	\]
    where $C$ is a constant independent of $\delta$. Since the functions $\theta_i$ are smooth, the function
\[
\chi_\delta:=\min_i\theta_i
\]
is Lipschitz on $M$. It vanishes on an open neighborhood of $N$, so its
restriction belongs to $\operatorname{Lip}_c(X)$. Moreover,
\[
0\leq\chi_\delta\leq1,
\quad
\chi_\delta=0\quad\text{on }\bigcup_iB_h(p_i,r_i),
\quad
\chi_\delta=1\quad\text{on }M\setminus\bigcup_iB_h(p_i,2r_i),
\]
and
\[
\norm{d\chi_\delta}_h^2
\leq C\sum_i r_i^{-2}\mathbf 1_{B_h(p_i,2r_i)}
\]
almost everywhere on $X$. Let $K\Subset X$. Since
$\bigcup_iB_h(p_i,2r_i)\subset\{x:d_h(x,N)<2\delta\}$, the set $K$ is
disjoint from this union for all sufficiently small $\delta$.
Therefore $\chi_\delta\to1$ uniformly on $K$.
	Finally, we need to verify that
	\begin{equation}\label{eq:g-capacity-zero}
		\int_X |d\chi_\delta|_g^2\,\dd\mu_g\longrightarrow0
		\quad\text{as }\delta\downarrow0.
	\end{equation}
After possibly decreasing $\delta$, we use the standard volume bound for small geodesic balls
	\[
	\operatorname{Vol}_h B_h(p_i,2r_i)\leq C r_i^4;
	\]
    using \eqref{eq:hausdorff-content-cover}, \eqref{eq:alpha-ball} and \eqref{eq:conformal-gradient-measure},
	we obtain
	\begin{align*}
		\int_X |d\chi_\delta|_g^2\,\dd\mu_g
		&=
		\int_X \alpha_h^{2/3}|d\chi_\delta|_h^2\,\dd\mu_h\\
		&\leq
		C\sum_i
		r_i^{2/3}r_i^{-2}
		\operatorname{Vol}_h B_h(p_i,2r_i)
		\leq
		C\sum_i r_i^{8/3}\\
		&\leq
		C\delta^{2/3}\sum_i r_i^2
		\leq
		C\delta^{2/3},
	\end{align*}
	which proves \eqref{eq:g-capacity-zero}. Therefore, the cutoffs $\chi_\delta$ satisfy
    the capacity condition \eqref{eq:abstract-capacity-condition} and Proposition \ref{prop:capacity-form} applies. Thus, after at worst passing to a double cover of $X$,
\[
\nabla^g\omega_1=0,
\quad
\beta_h=\gamma_h=-\frac12\alpha_h
\quad\text{on }X
\]
and $g=\alpha_h^{2/3}h$ is a K\"ahler metric on $X$.
Since $W_h^+\neq0$ on $X$, the eigenvalues of $W_g^+$ are
\[
\alpha=\dfrac{s_g}{6},
\quad
\beta=\gamma=-\dfrac{s_g}{12}.
\]
Since $\alpha>0$ on $X$, it follows that $s_g>0$ on $X$. Now, since $W_h^+= 0$ on $N$ and 
        $\beta_h=\gamma_h$ on $X$, we have that 
        $\#\mathrm{spec}(W_h^+)$, i.e. the number of distinct eigenvalues of $W_h^+$, is at most $2$ on $M$. Therefore, by \cite[Proposition 5(iv)]{Derdzinski} and the fact that
$W_h^+\not\equiv0$, we conclude that $W_h^+$ is nowhere zero, contrary
to $N\neq\varnothing$. Hence $N=\varnothing$, and, after at worst passing to a double cover of $M$, $g$ is K\"ahler with
positive scalar curvature on $M$. Since
$h=\alpha_h^{-2/3}g$, the conformal transformation law for the scalar
curvature gives
\[
\operatorname{Vol}_h(M)s_h
=\int_Ms_h\,\dd\mu_h
=\int_M\left(
\frac23\alpha_h^{-8/3}\norm{\nabla\alpha_h}_g^2
+s_g\alpha_h^{-2/3}
\right)\dd\mu_g>0.
\]
Since $h$ is Einstein, $s_h$ is constant, and hence positive: thus $h$ is a Hermitian Einstein metric with positive scalar curvature, which concludes the proof.
        \end{proof}
	\begin{proof}[Proof of Corollary \ref{cor:det-version}]
		We follow the notation in the proof of Corollary \ref{cor:det-version-hhw}. By assumption and the continuity and strict monotonicity of $\mathcal F$ on the
relevant interval, there is a unique $\rho_0<1$ such that
		\[
		\mathcal{F}(\rho)\geq-c=\mathcal{F}(\rho_0) 
		\]
        at every point where $W_h^+\neq0$,
        where $c<\frac{1}{3\sqrt{6}}$. Hence, by the monotonicity of $\mathcal{F}(\rho)$,
		\[
		\rho\leq\rho_0,
		\]
		i.e.
		\[
		\beta_h\leq\rho_0\alpha_h
		\]
		on the set $\{W_h^+\neq0\}$. Since, on the zero set of $W_h^+$, the same inequality is automatically true, the conclusion follows from Theorem \ref{thm:main}.
	\end{proof}
	
\begin{proof}[Proof of Corollary \ref{cor:nonpositive-scalar-nonexistence}]
Suppose that
\[
\sup_{\{W_h^+\neq0\}}\frac{\beta_h}{\alpha_h}=\rho_0<1.
\]
Then $\beta_h\leq\rho_0\alpha_h$ on $\{W_h^+\neq0\}$, and the same
inequality holds trivially on the zero set. Since
$W_h^+\not\equiv0$, Theorem \ref{thm:main} implies that $h$ has
positive scalar curvature, contradicting the hypothesis.
\end{proof}

\section{K\"ahler--Einstein surfaces}
	\label{sec:ke-holomorphic-pinching}
    We are now ready to prove Theorem \ref{thm:siu-yang-pinching}. Let $(M,h,J)$ be a
K\"ahler--Einstein surface with the orientation induced by $J$. At
each $p\in M$, denote by $H$ the holomorphic sectional curvature,
defined by
\begin{equation}\label{eq:holomorphic-sect-curvature}
H(v)=h(R(v,Jv)Jv,v)
\end{equation}
for every unit vector $v\in T_pM$. We denote its pointwise maximum and
minimum by $H_{\max}$ and $H_{\min}$, respectively, and its
Fubini--Study average over $\mathbb{CP}^1$ by $H_{\mathrm{av}}$.
	Following the notation of Siu and Yang \cite{SiuYang},
	when $H_{\max}>H_{\min}$, we define the pinching
	ratio
	\[
	\Theta=
	\frac{H_{\mathrm{av}}-H_{\min}}{H_{\max}-H_{\min}} .
	\]

It is well known that, with respect to the complex orientation, the
tensor $W_h^-$ of a K\"ahler--Einstein surface is precisely the
\emph{Bochner tensor} $B$; see, for instance,
\cite{Bochner,Tachibana,TricerriVanhecke}. As in the previous sections, let
\[
    \alpha_-\geq\beta_-\geq\gamma_-
\]
denote the eigenvalues of $W_h^-$, and define
\[
    \rho_-:=\frac{\beta_-}{\alpha_-}
\]
at every point where $W_h^-\neq0$. Let $\omega$ be the K\"ahler form, normalized by
$|\omega|_h^2=2$. For every unit vector $v\in T_pM$, set
\[
    \sigma_v:=v^\flat\wedge(Jv)^\flat,
    \quad
    \varphi_v:=\sqrt{2}\left(\sigma_v-\frac12\omega\right)
    \in\Lambda_p^-.
\]
Then $|\varphi_v|_h=1$, and the curvature-operator decomposition of
the K\"ahler--Einstein metric $h$ gives
\[
    H(v)
    =\big\langle\mathcal R_h(\sigma_v),\sigma_v\big\rangle_h
    =\frac{s_h}{6}
     +\frac12\big\langle W_h^-(\varphi_v),\varphi_v\big\rangle_h.
\]
As $v$ varies over the unit tangent sphere, $\varphi_v$ ranges over
the unit sphere in $\Lambda_p^-$. Consequently,
\[
    H_{\max}=\frac{s_h}{6}+\frac{\alpha_-}{2},
    \quad
    H_{\min}=\frac{s_h}{6}+\frac{\gamma_-}{2}.
\]
Furthermore, a well-known result due to Berger 
	\cite{Berger} implies that 
	\[
	H_{\mathrm{av}}=\dfrac{s_h}{6}.
	\]
	Putting everything together, we can rewrite the 
	pinching ratio $\Theta$ as
	\begin{equation}\label{eq:theta-bochner-ratio}
		\Theta
		=
		\frac{-\gamma_-}{\alpha_- -\gamma_-}
		=
		\frac{1+\rho_-}{2+\rho_-}.
	\end{equation}
Since $-1/2\leq\rho_-\leq1$ and
\[
\frac{d\Theta}{d\rho_-}=\frac{1}{(2+\rho_-)^2}>0,
\]
we have
\[
\frac13\leq\Theta\leq\frac23.
\]
Moreover,
\[
\Theta=\frac23
\quad\Longleftrightarrow\quad
\rho_-=1
\quad\Longleftrightarrow\quad
\alpha_-=\beta_-.
\]
	
	\begin{proof}[Proof of Theorem \ref{thm:siu-yang-pinching}]
		Assume first that $W_h^-\not\equiv0$: since $\Theta\geq 1/3$, we immediately see that
		$1/3\leq \theta<2/3$. Then, using the pinching hypothesis $\Theta\leq \theta$ together with \eqref{eq:theta-bochner-ratio}, we obtain the uniform spectral gap
		\[
		\frac{\beta_-}{\alpha_-}
		\leq
		\rho_0<1
		\]
		on $\{W_h^-\neq0\}$, where $\rho_0=(2\theta-1)/(1-\theta)$. After reversing the orientation, $W_h^-$ becomes $W_h^+$. The uniform
bound above then contradicts Corollary
\ref{cor:nonpositive-scalar-nonexistence} unless $W_h^-$ vanishes
identically in the original complex orientation. Hence $W_h^-\equiv0$. If $h$ is Ricci-flat, it is therefore flat, and the Bieberbach theorem
implies that $M$ is finitely covered by a flat complex torus. We may
thus assume that $s_h<0$. For a K\"ahler--Einstein surface,
$W_h^-=0$ is equivalent to the vanishing of the Bochner tensor. A
constant-scalar-curvature K\"ahler metric with vanishing Bochner tensor
has constant holomorphic sectional curvature
\cite{MatsumotoTanno}. Consequently, the universal cover is the
complex hyperbolic plane \cite[Chapter IX]{KobayashiNomizu}, and $M$
is a compact quotient of the complex ball.
\end{proof}

\section{Noncompact Ricci-flat manifolds}
\label{sec:extensions}

In this section, we show how to extend our results to noncompact Ricci-flat manifolds: in particular, under suitable 
volume growth and curvature decay assumptions, we show how
to adapt the argument of Proposition \ref{prop:capacity-form}. 

\begin{proof}[Proof of Theorem \ref{thm:intro-extensions}] Again, we set
    \[
        N=\{W_h^+=0\},
        \quad
        X=M\setminus N .
    \]
    As we observed in Section \ref{sec:closed-einstein},  the condition \eqref{eq:noncompact-uniform-gap} implies that
    $\alpha_h>0$ and that it is simple on $X$: hence, we need to verify the capacity condition
    in Proposition \ref{prop:capacity-form} for
    $g=\alpha_h^{2/3}h$. The restriction of the zero set of $W_h^+$ to each ball $\overline{B_h(p_0,2R)}$ has finite $\mathcal{H}_h^2$-measure by B\"ar's Theorem \cite{Baer1999}: hence, we can apply the
    same argument used in the proof of Theorem \ref{thm:main} in order to
    construct cutoff functions $\eta_{\delta,R}$, $0\leq \eta_{\delta,R}\leq 1$, which vanish near $N\cap \overline{B_h(p_0,2R)}$, are
    equal to 1 away from the zero set and satisfy
    \begin{equation}\label{eq:noncompact-zero-capacity}
        \int_X |d\eta_{\delta,R}|_g^2\,\dd\mu_g
        \leq C_R\delta^{2/3},
    \end{equation}
    where $C_R$ is a constant independent of $\delta$. Moreover, for fixed $R$, $\eta_{\delta,R}\to1$ locally uniformly on $B_h(p_0,2R)\cap X$ as $\delta\to0$. Let $\psi_R$ be a Lipschitz radial cutoff function such that $\psi_R=1$ on
    $B_h(p_0,R)$, $\psi_R=0$ on $M\setminus B_h(p_0,2R)$ and
    $|d\psi_R|_h\leq C R^{-1}$ almost everywhere. Since $h$ is Ricci-flat, $\Rm_h= W_h$, which implies that
    $\alpha_h\leq C|\Rm_h|$: hence, we obtain 
    \begin{align}
        \int_X |d\psi_R|_g^2\,\dd\mu_g
        &=\int_{B_h(p_0,2R)\setminus B_h(p_0,R)} \alpha_h^{2/3}|d\psi_R|_h^2\,\dd\mu_h \nonumber\\
        &\leq C R^{-2-2q/3}\operatorname{Vol}_h(B_h(p_0,2R))\nonumber\\   
        &\leq C R^{\nu-2-2q/3}=o(1)
        \label{eq:noncompact-infinity-capacity}
    \end{align}
    as $R\to\infty$. We now define $\chi_{\delta,R}:=\eta_{\delta,R}\psi_R$: 
    by \eqref{eq:noncompact-zero-capacity} and 
    \eqref{eq:noncompact-infinity-capacity}, we obtain
    \[
        \int_X |d\chi_{\delta,R}|_g^2\,\dd\mu_g
        \leq C_R\delta^{2/3}+o_R(1),
    \]
    where $o_R(1)$ denotes a quantity that goes to $0$ as
    $R\to +\infty$. 
    We choose a sequence of radii $R_j$ such that $R_j\to+\infty$ as $j\to +\infty$. Furthermore, we can find a sequence $\{\delta_j\}$ such that $\delta_j>0$ and
    \[
    \delta_j <\dfrac{1}{R_j}, \quad C_{R_j}\delta_j^{2/3}<\dfrac{1}{R_j}
    \]
    for every $j$. 
    Setting $\chi_j:=\chi_{\delta_j,R_j}\in \operatorname{Lip}_c(X)$, 
    we immediately see that 
    $\chi_j\to1$ locally uniformly in $X$ and
    \[
        \int_X |d\chi_j|_g^2\,\dd\mu_g\longrightarrow0 .
    \]
    Therefore, Proposition \ref{prop:capacity-form} applies, and we can conclude that $g=\alpha_h^{2/3}h$ is a K\"ahler metric
    with positive scalar curvature on $X$. Thus, as in the proof of Theorem \ref{thm:main}, we can use 
    our assumption and \cite[Proposition 5 (iv)]{Derdzinski} to deduce that $N=\varnothing$: hence, after passing, if necessary, to the double cover, $g$ is a K\"{a}hler metric with positive scalar curvature on $M$. 
    This implies that $h$ is a Ricci-flat Hermitian metric; however, $h$ is not a K\"{a}hler metric, since otherwise $W_h^+$ would vanish identically \cite[Proposition 2]{Derdzinski}, and this concludes the proof. 
\end{proof}

Now, we want to adapt these results to the case of gravitational
instantons: following \cite{LiSun}, a gravitational instanton is a complete,
connected, oriented, noncompact Ricci-flat four-manifold with
quadratic curvature decay. Equivalently,
\[
\int_M|\Rm_h|^2\,\dd\mu_h<\infty;
\]
see \cite[Remark 2.3]{LiSun}, where the equivalence is deduced using
the $\varepsilon$-regularity theorem of Cheeger and Tian \cite{CheegerTian}. In our setting, we recall the three types of gravitational instantons, 
which can be distinguished by the spectral properties of $W_h^+$.
Namely, we say that an oriented gravitational instanton is of
\begin{itemize}
    \item \emph{Type I} if $W_h^+= 0$ (in this case, 
    $(M,h)$ is locally hyperk\"{a}hler);
    \item \emph{Type II} if $W_h^+$ has two distinct eigenvalues at every point;
    \item \emph{Type III} if $W_h^+$ has three distinct eigenvalues generically. 
\end{itemize}

Every nonflat gravitational instanton has exactly one end: indeed, if
it had at least two ends, the Cheeger--Gromoll splitting theorem would
give an isometric product $\mathbb R\times N^3$, with $N$ compact, and Ricci-flatness would then force $N$ (and, therefore, $M$) to be flat.

If $(M,h)$ is globally hyperk\"ahler and simply connected, the
asymptotic classification theorem of Sun and Zhang \cite{SunZhang}
implies that its end is of ALE, ALF, ALG, ALH, ALG*, or ALH* type (see
\cite[Section 4]{LiSun} and
\cite{BiquardGauduchon,ChenChenI,GibbonsHawking,GibbonsHawking2,Minerbe});
the ALE cases were classified by Kronheimer
\cite{KronheimerALE,KronheimerTorelli}. Under a noncollapsing volume
assumption, Theorem \ref{thm:intro-extensions} gives the following
characterization of the Type II case.

\begin{corollary}\label{cor:noncollapsed-gravitational-instanton}
    Let $(M,h)$ be an oriented gravitational instanton with Euclidean volume
    growth, i.e. such that, for some $v>0$ and some $p_0\in M$,
    \[
        \operatorname{Vol}_h(B_h(p_0,R))\geq vR^4
        \quad\text{for all }R\gg1.
    \]
    Assume that $W_h^+\not\equiv0$ and that, for some $\rho_0<1$,
    \[
        \beta_h\leq\rho_0\alpha_h
        \quad\text{on }M.
    \]
    Then $M$ has one ALE end, and, after at worst passing to a double cover of $M$, $h$ is conformal to a K\"ahler metric with positive scalar curvature. In particular, $h$ is a Hermitian Ricci-flat and non-K\"ahler gravitational instanton of 
    Type II. 
\end{corollary}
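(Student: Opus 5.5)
The plan is to reduce Corollary \ref{cor:noncollapsed-gravitational-instanton} to Theorem \ref{thm:intro-extensions} by checking the volume and curvature hypotheses \eqref{eq:noncompact-volume-curvature}, and then to identify the end using Bando--Kasue--Nakajima.

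First, I need the volume and curvature rates. By Bishop--Gromov for Ricci-flat metrics, $\operatorname{Vol}_h(B_h(p_0,R))\leq \omega_4R^4$, so I can take $\nu=4$. For the curvature I use that a gravitational instanton has $\int_M|\Rm_h|^2\,\dd\mu_h<\infty$. I combine this with Euclidean volume growth in the theorem of Bando, Kasue and Nakajima \cite{BandoKasueNakajima}: a Ricci-flat four-manifold with finite $L^2$ curvature and Euclidean volume growth is ALE of order $4$. In particular it has faster-than-quadratic curvature decay,
\[
|\Rm_h|=O(R^{-6})\quad\text{on } M\setminus B_h(p_0,R).
\]
So I may take $q=6$, which gives $\nu-2-\tfrac{2q}{3}=4-2-4=-2<0$. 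Even the weaker decay $q>3$ would suffice. If I wanted to avoid the sharp ALE rate, the plain quadratic decay $q=2$ does \emph{not} suffice, since $4-2-\tfrac43>0$. Hence the improved decay from the ALE structure is the key input and the step requiring the most care. I will cite \cite{BandoKasueNakajima} for the curvature rate $r^{-4-\tau}$ in the ALE coordinates, which is comparable to the distance from $p_0$.

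With \eqref{eq:noncompact-uniform-gap} assumed and \eqref{eq:noncompact-volume-curvature} verified, Theorem \ref{thm:intro-extensions} applies. After at worst a double cover, $h$ is conformal to a K\"ahler metric $g=\alpha_h^{2/3}h$ with $s_g>0$, and $h$ is Ricci-flat, Hermitian and non-K\"ahler. Passing to a double cover preserves Euclidean volume growth, up to a constant, and finite $L^2$ curvature, so the statement is unaffected.

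It remains to identify the type. The proof of Theorem \ref{thm:intro-extensions} shows that $W_h^+$ is nowhere zero and $\beta_h=\gamma_h=-\tfrac12\alpha_h<\alpha_h$ everywhere. Hence $W_h^+$ has exactly two distinct eigenvalues at every point, which is Type II by definition. Finally, as observed above, every nonflat instanton has one end, and nonflatness follows from $W_h^+\not\equiv 0$. By \cite{BandoKasueNakajima} this end is ALE, which concludes the argument.
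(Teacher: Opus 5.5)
Your proposal is correct and follows the paper's own proof essentially step for step: Bishop--Gromov gives $\nu=4$, and Bando--Kasue--Nakajima gives an ALE end of order $4$ and hence $|\Rm_h|=O(R^{-6})$, so Theorem~\ref{thm:intro-extensions} applies; Type II then follows from $\beta_h=\gamma_h=-\frac12\alpha_h<\alpha_h$ everywhere. One cosmetic slip: ALE of order $\tau$ gives curvature decay $r^{-2-\tau}$, not $r^{-4-\tau}$, but the rate $R^{-6}$ you actually use is correct, and as you note any $q>3$ would suffice.
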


\begin{proof}
    By the classical Bishop--Gromov volume estimate, we obtain that \eqref{eq:noncompact-volume-curvature} holds with $\nu=4$. Since $h$ is Ricci-flat and $|\Rm_h|\in L^2$, the non-collapsing volume assumption allows us to apply the theorem of Bando, Kasue and Nakajima, which implies that the end has an ALE structure of order $4$ \cite{BandoKasueNakajima}; in particular $|\Rm_h|=O(R^{-6})$ on $M\setminus B_h(p_0,R)$. Thus \eqref{eq:noncompact-volume-curvature}
    holds, and the conclusion follows from Theorem \ref{thm:intro-extensions}: the fact that $(M,h)$ is of Type II is an immediate consequence of \eqref{eq:eigenvalues-Kahler}. 
\end{proof}

\begin{remark}
In the noncollapsed ALE case, Corollary
\ref{cor:noncollapsed-gravitational-instanton} can be combined with
known classification results. If $W_h^+=0$, then, up to a quotient,
$(M,h)$ is one of Kronheimer's Type I gravitational instantons
\cite{KronheimerALE,KronheimerTorelli}. If $W_h^+\not\equiv0$ and the
group at infinity satisfies $\Gamma\subset \mathrm{SU}(2)$ in the chosen
orientation, Li's classification
\cite{LiALE,LiGravInstantons} implies that, up to homothety and
isometry, $h$ is the Eguchi--Hanson metric on $T^*\mathbb S^2$ with
the opposite orientation \cite{EguchiHanson}.
\end{remark}

Li \cite{LiALE,LiGravInstantons} proved that collapsed Hermitian
non-K\"ahler gravitational instantons are toric and of ALF or AF type.
Combined with the classification of Biquard and Gauduchon
\cite{BiquardGauduchon}, this gives the following list (see
\cite[Section 5]{LiSun} for a complete description):
\begin{itemize}
    \item Kerr metrics on $\mathbb{S}^2\times\mathbb{R}^2$, with respect to both orientations 
    \cite{GibbonsHawking77};
    \item Chen--Teo metrics on $(\mathbb{S}^2\times\mathbb{R}^2)\#\overline{\mathbb{CP}^2}$, where $\overline{\mathbb{CP}^2}$ is the complex projective space with the opposite orientation \cite{ChenTeo};
    \item Taub-bolt and anti-Taub-bolt metrics \cite{Page-taub};
    \item Taub--NUT metric on $\mathbb{R}^4$ with the opposite orientation, called anti-Taub--NUT metric. 
\end{itemize}
We recall that, with respect to the standard orientation of
$\mathbb{R}^4$, the Taub--NUT metric is hyperk\"ahler and hence is a
Type I gravitational instanton. We now prove the following result.
\begin{theorem}\label{thm:collapsed-gravitational-instanton}
    Let $(M,h)$ be an oriented gravitational instanton with at most cubic
    volume growth, i.e.
    \[
        \operatorname{Vol}_h(B_h(p_0,R))=O(R^3) .
    \]
    Assume that $W_h^+\not\equiv0$ and that, for some $\rho_0<1$,
    \[
        \beta_h\leq\rho_0\alpha_h
        \quad\text{on }M .
    \]
    Then, after at worst passing to a double cover of $M$, $h$ is conformal to a K\"ahler metric with positive scalar curvature. In particular, $h$ is a Hermitian Ricci-flat and non-K\"ahler gravitational instanton of 
    Type II and belongs to one
of the following families:
\begin{itemize}
\item a Kerr metric, with either orientation;
\item a Chen--Teo metric, with its Type II orientation;
\item the Taub--bolt metric, with either orientation
      (Taub--bolt or anti--Taub--bolt);
\item the reversed Taub--NUT metric.
\end{itemize}
\end{theorem}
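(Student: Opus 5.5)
The plan is to verify the capacity hypothesis of Proposition \ref{prop:capacity-form} for $g=\alpha_h^{2/3}h$ directly, using only the $L^2$ curvature bound and H\"older's inequality, rather than pointwise decay. Set $N=\{W_h^+=0\}$ and $X=M\setminus N$. Near $N$, on each ball $\overline{B_h(p_0,2R)}$, I will reuse verbatim the B\"ar-based cutoffs $\eta_{\delta,R}$ from the proof of Theorem \ref{thm:intro-extensions}. These satisfy $\int_X|d\eta_{\delta,R}|_g^2\,\dd\mu_g\leq C_R\delta^{2/3}$ and $\eta_{\delta,R}\to1$ locally uniformly on $B_h(p_0,2R)\cap X$. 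The only new ingredient is the cutoff at infinity.

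Let $\psi_R$ be the radial Lipschitz cutoff with $\psi_R=1$ on $B_h(p_0,R)$, $\psi_R=0$ outside $B_h(p_0,2R)$ and $|d\psi_R|_h\leq CR^{-1}$. Write $A_R=B_h(p_0,2R)\setminus B_h(p_0,R)$. Since $h$ is Ricci-flat, $\alpha_h\leq C|\Rm_h|$. Applying H\"older with exponents $3$ and $3/2$ gives
\[
\int_X|d\psi_R|_g^2\,\dd\mu_g\leq CR^{-2}\int_{A_R}|\Rm_h|^{2/3}\,\dd\mu_h\leq CR^{-2}\Bigl(\int_{A_R}|\Rm_h|^2\,\dd\mu_h\Bigr)^{1/3}\operatorname{Vol}_h(A_R)^{2/3}.
\]
Cubic volume growth gives $\operatorname{Vol}_h(A_R)^{2/3}=O(R^2)$, so the right-hand side is $O\bigl(\|\Rm_h\|_{L^2(A_R)}^{2/3}\bigr)$. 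This tends to $0$ as $R\to\infty$ because $\Rm_h\in L^2$ and the annuli escape to infinity. The cubic exponent is exactly critical here, and the finiteness of the $L^2$ energy supplies the missing $o(1)$. This is the step I expect to be the crux, though it is short.

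Next I choose $R_j\to\infty$ and $\delta_j$ with $C_{R_j}\delta_j^{2/3}<1/R_j$, and set $\chi_j=\eta_{\delta_j,R_j}\psi_{R_j}$. Then $\chi_j\in\operatorname{Lip}_c(X)$, $\chi_j\to1$ locally uniformly on $X$, and $\int_X|d\chi_j|_g^2\,\dd\mu_g\to0$, using $|d(\eta\psi)|^2\leq2|d\eta|^2+2|d\psi|^2$. Proposition \ref{prop:capacity-form} then shows that, after at worst a double cover, $g$ is K\"ahler with $s_g>0$ on $X$, and that $W_h^+$ has at most two distinct eigenvalues on $M$. Since $W_h^+\not\equiv0$, \cite[Proposition 5(iv)]{Derdzinski} gives $N=\varnothing$, exactly as in Theorem \ref{thm:main}. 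Hence $g$ is K\"ahler with positive scalar curvature on all of $M$ (or its double cover). By \eqref{eq:eigenvalues-Kahler} the instanton is of Type II, and $h$ is non-K\"ahler since otherwise $W_h^+\equiv0$.

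The final classification is pure citation. $h$ is a collapsed Hermitian non-K\"ahler gravitational instanton: Hermitian because it is conformally K\"ahler, and collapsed because of the cubic volume growth, which excludes the Euclidean growth of the ALE case. Li's results \cite{LiALE,LiGravInstantons} then show that it is toric of ALF or AF type, and the Biquard--Gauduchon classification \cite{BiquardGauduchon} identifies it with one of the listed Kerr, Chen--Teo, (anti-)Taub--bolt, or reversed Taub--NUT metrics. One point needs care: if a double cover was taken, the classification applies to the cover, and one must check that the quotient returns to the list. I expect this to be handled by noting that each model's eigenform line bundle is already trivial, or by simply stating the result up to that cover.
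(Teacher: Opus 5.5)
Your proposal is correct and follows the paper's proof essentially step by step. Both reuse the B\"ar-based cutoffs $\eta_{\delta,R}$ near $N$ and handle infinity with the same H\"older bound $CR^{-2}\bigl(\int_{A_R}|\Rm_h|^2\dd\mu_h\bigr)^{1/3}\operatorname{Vol}_h(A_R)^{2/3}=o_R(1)$, then take the diagonal sequence $\chi_j$, apply Derdzi\'nski's Proposition 5(iv) to get $N=\varnothing$, and cite Li's classification. One small point: make sure your choice of $\delta_j$ also forces $\delta_j\to0$, which the paper guarantees by imposing $\delta_j<1/R_j$; otherwise $\chi_j\to1$ locally uniformly on $X$ is not ensured.
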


\begin{proof}
    First, we observe that the construction of the functions 
    $\eta_{\delta,R}$ described in the proof of Theorem 
    \ref{thm:intro-extensions} applies without change in this setting: hence, we just need 
    to verify the capacity condition at infinity, in order to
    apply Proposition \ref{prop:capacity-form}. Let $\psi_R$ be the usual radial
    cutoff function supported in $B_h(p_0,2R)$, equal to 1 on $B_h(p_0,R)$, and with
    $|d\psi_R|_h\leq CR^{-1}$. On $A_R=B_h(p_0,2R)\setminus B_h(p_0,R)$, we can apply H\"older's inequality and use the fact that
    $\alpha_h\leq C|\Rm_h|$ in order to obtain
    \begin{align*}
        \int_X |d\psi_R|_g^2\,\dd\mu_g
        &=\int_{A_R}\alpha_h^{2/3}|d\psi_R|_h^2\,\dd\mu_h \\
        &\leq C R^{-2}
        \left(\int_{A_R}|\Rm_h|^2\,\dd\mu_h\right)^{1/3}
        \operatorname{Vol}_h(A_R)^{2/3}= o_R(1),
    \end{align*}
    where the last equality comes from the fact that 
    $\norm{\Rm_h}\in L^2$ and the volume assumption. As we did in the proof of Theorem \ref{thm:intro-extensions}, we can similarly construct cutoff functions $\chi_j$ which satisfy the hypotheses of Proposition \ref{prop:capacity-form}: therefore, after at worst passing to a double cover of $X$, we conclude that $g$ is a K\"{a}hler metric with positive scalar curvature on $X$ and, using again 
    \cite[Proposition 5(iv)]{Derdzinski}, that $X=M$ as in 
    Theorem \ref{thm:intro-extensions}.
    The classification now follows from the aforementioned classification result of conformally
    K\"ahler gravitational instantons due to Li \cite{LiGravInstantons,LiSun}.
\end{proof}

\begin{remark} \label{rem:cheegertian}
    The proof of Theorem \ref{thm:collapsed-gravitational-instanton} can be deduced directly from Theorem \ref{thm:intro-extensions} combined with the curvature decay proved in \cite{CheegerTian}. Moreover, the proof uses
only the bound
$\operatorname{Vol}_h(B_h(p_0,R))\leq CR^3$ and therefore also yields a
rigidity statement for non-ALF collapsed gravitational instantons.
Indeed, Li's classification \cite{LiGravInstantons} shows that every
Type II gravitational instanton is, up to scaling, of ALE, ALF, or AF
type. Consequently, if $(M,h)$ is of ALG, ALG*, ALH, or ALH* type and
satisfies \eqref{eq:noncompact-uniform-gap}, then it must be of Type I
and hence locally hyperk\"ahler.
\end{remark}

\begin{remark}
    The same capacity proof can also be adapted to isolated quotient-orbifold
    singularities.  In local uniformizing charts the lifted metric is smooth, so
    isolated singular points have zero capacity for $\alpha_h^{2/3}h$; combining
    these local cutoffs with the annular cutoff at infinity gives the
    corresponding orbifold variants.  We do not state these extensions here.
\end{remark}

	\section{Sharpness of the spectral condition}\label{sec:ricci-flat-endpoint-model}
	
	In this section, we provide examples of
	Ricci-flat four-manifolds $(M,h)$ such that,
	at a point $p$, $\alpha_h=\beta_h\neq 0$: this
	shows that the uniform hypothesis on the eigenvalues
	in Theorems \ref{thm:main} and \ref{thm:intro-extensions} is optimal. 
    \subsection{Compact case: K3 surfaces}
	First, we recall that a hyperk\"{a}hler
	manifold $(M,h)$ is a Riemannian manifold endowed with three 
	K\"{a}hler structures $I$, $J$ and $K$, which
	satisfy the quaternionic relations:
	\[
	I^2=J^2=K^2=IJK=-1.
	\]
	In dimension four, according to Kodaira's classification of complex surfaces, the only examples of compact
    hyperk\"{a}hler manifolds are flat tori and K3 surfaces. It is well known that 
    every K3 surface admits a K\"{a}hler metric: the resolution of the
	Calabi conjecture by Yau ensures that, given any K\"{a}hler metric $\hat{h}$ on $M$ with K\"{a}hler form $\hat{\omega}$, there exists
	a unique K\"{a}hler Ricci-flat metric $h$ with K\"{a}hler form 
    $\omega\in [\hat{\omega}]$ \cite{Yau}. Furthermore, by \cite[Proposition 2]{Derdzinski}, we know that all these metrics are also anti-self-dual with respect to the orientation induced by the hyperk\"{a}hler structure (see also \cite{Hitchin}),
	which means that the curvature operator is precisely
	$W_h^-$. 
    Since the metrics constructed by Yau
	are not explicit, in general, no explicit formula for
	the curvature tensor is known. Every K3 surface is spin, and
\[
\hat A(M)=-\frac{\tau(M)}8=2
\]
with respect to the hyperk\"ahler orientation. The Lichnerowicz formula
therefore implies that a K3 surface does not admit a metric of positive scalar
curvature \cite{Lichnerowicz}. Thus, after reversing the hyperk\"ahler
orientation, a nonflat Ricci-flat K3 metric cannot satisfy the
conclusions of Theorems \ref{thm:half-harmonic-simple-top} and
\ref{thm:main}.

We show that, for certain K3 surfaces,
\[
\sup_{\{W_h^+\neq0\}}\rho=1
\]
and that there is a point $p$ with $W_h^+(p)\neq0$ and $\rho(p)=1$.
    
    We want to look at holomorphic isometries of K3 surfaces: 
	Lye showed that, if a hyperk\"{a}hler manifold admits
	a holomorphic isometry of order at least $3$ that
	fixes a subset of positive dimension, then the
	holomorphic sectional curvatures of $I$, $J$ and $K$
	provide some information on the full Riemann tensor
	of the manifold \cite{LyeK3}. 

    In this setting, given a point $p\in M$, we define the holomorphic sectional curvatures $H_{II}$, $H_{JJ}$, $H_{KK}$ and the mixed-type curvatures
    $H_{IJ}$, $H_{IK}$, $H_{JK}$ at $p$ as in \eqref{eq:holomorphic-sect-curvature}: for instance, for every
    $v\in T_pM$ of unit norm,
    \[
    H_{II}(v)=h(R(v,Iv)Iv,v), \quad 
    H_{IJ}(v)=h(R(v,Iv)Jv,v)
    \]
    and so on. It is not hard to show that 
    $H_{IJ}=H_{JI}$, $H_{IK}=H_{KI}$ and $H_{JK}=H_{KJ}$;
    furthermore, since $h$ is Ricci-flat, we have
    \[
    H_{II}+H_{JJ}+H_{KK}=0.
    \]
	Following \cite[Theorem 4.6]{LyeK3}, if $(M,h)$ admits
    a holomorphic isometry of order at least $3$ which fixes a subset $C$ of $M$ with positive dimension, then 
    \begin{equation} \label{eq:holom-sec-curv-K3-properties}
    H_{IJ}(v)=H_{IK}(v)=H_{JK}(v)=0, \quad H_{JJ}(v)=H_{KK}(v)=-\dfrac{1}{2}H_{II}(v)
    \end{equation}
    for every unit tangent vector $v\in T_pC$, $p\in C$:
    in particular, this means that all six curvature quantities are uniquely determined by $H_{II}$ along $C$.
    Assume that $\dim C=2$, i.e. $C$ is a surface inside
    $(M,h)$: since $C$ is fixed by an isometry of $h$, 
    it is a (possibly non-connected) totally geodesic smooth submanifold with respect to the metric $\iota^*(h)$ induced
    by $h$ (see e.g. \cite{Kobayashi_isom}) and, by \cite[Corollary 4.8]{LyeK3}, for every $v\in T_pC$ of unit norm, 
\begin{equation}\label{eq:holomorphicsectLye}
H_{II}(w)=\left[(\alpha^2+\beta^2)^2+(\mu^2+\nu^2)^2
-4(\alpha^2+\beta^2)(\mu^2+\nu^2)\right]\kappa(p),
\end{equation}
where
\[
w=\alpha v+\beta Iv+\mu Jv+\nu Kv,
\quad
\alpha^2+\beta^2+\mu^2+\nu^2=1,
\]
and $\kappa$ is the Gaussian curvature of $(C,\iota^*h)$. Now, let us choose a local orthonormal frame $\{e_i\}$ on an open chart
    of $(M,h)$ such that $e_1$ is tangent to $C$ and
    \begin{align*}
        I(e_1)&=e_2,\quad I(e_3)=e_4;\\
        J(e_1)&=e_3, \quad J(e_2)=-e_4;\\
        K(e_1)&=e_4, \quad K(e_2)=e_3.
    \end{align*}
    Setting $R_{ijkl}=
    h(R(e_i,e_j)e_l, e_k)$, we can exploit \eqref{eq:holomorphicsectLye} with $v=e_1$ and $(\alpha,\beta,\mu,\nu)=(1,0,0,0)$ to obtain,
    on $C$, 
 \begin{align*}
R_{1212}&=H_{II}(e_1)=\kappa, \\ 
R_{1313}&=H_{JJ}(e_1)=-\dfrac{1}{2}H_{II}(e_1)=-\dfrac{1}{2}\kappa=R_{1414};
\end{align*}
since $h$ is Ricci-flat and anti-self-dual, we have
\begin{align*}
R_{3434}&=-R_{1234}=R_{1212}=\kappa, \\
R_{4242}&=-R_{1342}=R_{1313}=R_{1414}=-R_{1423}=R_{2323}=-\dfrac{1}{2}\kappa.
\end{align*}
Furthermore, by \eqref{eq:holom-sec-curv-K3-properties} and
anti-self-duality, we obtain
\[H_{IJ}(e_1)=H_{IK}(e_1)=H_{JK}(e_1)=0 \Longrightarrow
R_{ijik}=0
\]
whenever $i,j,k$ are pairwise distinct. We now reverse the orientation on $M$: since $h$ is Ricci-flat and
self-dual in this orientation, its full curvature operator is $W_h^+$.
A direct computation from the preceding curvature identities gives
\begin{equation}\label{eq:weylK3}
W_h^+=
\begin{pmatrix}
2\kappa & 0 & 0\\
0 & -\kappa & 0\\
0 & 0 & -\kappa
\end{pmatrix}.
\end{equation}
    If we assume that $M$ is compact, then $C$ is a compact surface
    with genus $g$: if $g>1$, the Gauss--Bonnet Theorem implies that there exist points of $C$ where $\kappa<0$, which means
    that $\alpha_h=\beta_h$ at those points. 
    
    We observe that, in their classification of 
    non-symplectic holomorphic automorphisms of order $3$, Artebani and Sarti \cite{ArtebaniSarti} proved the existence of K3 surfaces satisfying our
    hypotheses and such that the fixed point set of
    the automorphism is a surface of genus $g>1$, which proves
    our claim. Furthermore, if we consider the 
    hyperk\"{a}hler orientation (i.e. the one for which 
    $h$ is anti-self-dual), by \eqref{eq:theta-bochner-ratio}, 
    \eqref{eq:weylK3} and the previous discussion, we obtain
    that there exist points of $C$ at which $\Theta=2/3$.
    
    Here we also provide an explicit example
    of an algebraic K3 surface where $\sup_{\{W_h^+\neq 0\}}\rho=1$. 
    We consider a K3 surface $M$ as a branched double
    cover of $\CP^2$, defined in the weighted 
    projective space $\mathbb{P}(1,1,1,3)$: namely, using
    homogeneous coordinates $[x:y:z:w]$, we choose the
    K3 surface described by the equation
    \[
    w^2=x^6+y^6+z^6,
    \]
    which is branched over the sextic curve $x^6+y^6+z^6=0$
    in $\CP^2$. The surface $M$ is a smooth complex submanifold of
$\mathbb{P}(1,1,1,3)$. Indeed, the first-order partial derivatives of the defining polynomial do not have a common zero outside the origin of $\mathbb{C}^4$ and the unique singular point $[0:0:0:1]$ of $\mathbb{P}(1,1,1,3)$ does not belong to $M$.

The weighted projective space $\mathbb{P}(1,1,1,3)$ admits an
orbifold K\"ahler form (see e.g. \cite{RossThomas}), whose restriction to $M$ is
a smooth K\"ahler form $\widehat\omega$. We define
\[
\phi([x:y:z:w])=[e^{2\pi i/3}x:y:z:w]:
\]
it is clear that $\phi$ is a holomorphic automorphism of $M$ and that $\phi^3=\Id$.
We set
\[
\widehat\omega_{\mathrm{inv}}
:=\frac13\left(
\widehat\omega+\phi^*\widehat\omega+(\phi^2)^*\widehat\omega
\right).
\]
By construction, $\widehat\omega_{\mathrm{inv}}$ is a $\phi$-invariant K\"ahler form on $M$ and, therefore, by Yau's theorem, its
K\"ahler class contains a unique Ricci-flat K\"ahler form $\omega$
\cite{Yau}.
Since $\phi$ preserves the cohomology class $[\widehat\omega_{\mathrm{inv}}]$ and $\omega\in [\widehat\omega_{\mathrm{inv}}]$, the form $\phi^*\omega$ lies in the same class and is also Ricci-flat.
Consequently, by uniqueness in Yau's Theorem, $\phi^*\omega=\omega$, so $\phi$ is an isometry of the Ricci-flat K\"ahler metric $h$ associated with $\omega$.

    Now, we look for the fixed point set $\operatorname{Fix}(\phi)$ of $\phi$, 
    i.e. for the points $[x:y:z:w]$ such that
    \[
    (e^{\frac{2\pi i}{3}}x,y,z,w)=(\lambda x, \lambda y, \lambda z,\lambda^3w),
    \]
    for some $\lambda\in\mathbb{C}\setminus\{0\}$. If $\lambda=1$, a point $p=[x:y:z:w]$ lies in $\operatorname{Fix}(\phi)$ if and 
    only if $x=0$, equivalently, if and only if $p\in C$, where
   $C$ is the complex curve defined by the
    equation $w^2=y^6+z^6$. On the other hand, if $\lambda\neq 1$,
    we immediately get that $\lambda=e^{\frac{2\pi i}{3}}$ and that $y=z=0$: using the equation which defines $M$, we 
    obtain the isolated points
    \[
    q_1=[1:0:0:1], \quad q_2=[1:0:0:-1],
    \]
    i.e. $\operatorname{Fix}(\phi)=C\cup \{q_1,q_2\}$. 

    Hence, the fixed point set of $\phi$ contains 
    a submanifold of $(M,h)$ of dimension $2$, which has to
    be totally geodesic. 
    In order to compute the genus of $C$, we observe that $C$ can be seen as the double cover of the complex line $\{x=0\}$, which is a Riemann sphere $\CP^1$, branched over the six roots
    of the polynomial $P(y,z)=y^6+z^6$: by the 
    classical Riemann--Hurwitz Theorem, we have
    \[
    2g_{C}-2=d(2g_{\CP^1}-2)+B=-2d+B,
    \]
    where $d$ is the degree of the cover $C\longrightarrow\CP^1$
    and $B$ is the total ramification. 
Since $d=2$ and $B=6$, we have $g_C=2$. Thus $C$ is a compact
totally geodesic surface in $(M,h)$. Since $\chi(C)<0$, the
Gauss--Bonnet theorem gives a point $p\in C$ with $\kappa(p)<0$.
With respect to the hyperk\"ahler orientation, the preceding
calculation gives
\[
\operatorname{spec}\bigl(W_h^-(p)\bigr)
=\{2\kappa(p),-\kappa(p),-\kappa(p)\}.
\]
After reversing the orientation, this becomes the spectrum of $W_h^+(p)$;
hence $\alpha_h(p)=\beta_h(p)>0$ and $\rho(p)=1$. 

We also obtain examples showing that, without a uniform gap, one
cannot conclude that $W_h^+$ is either identically zero or nowhere
zero. Certain K3 surfaces with Calabi--Yau metrics admit a holomorphic
isometry of order at least $3$ fixing an elliptic curve $C$ (see e.g.
\cite{ArtebaniSarti} and the Kummer surface studied in \cite{LyeK3}).
Since $C$ is totally geodesic and $\chi(C)=0$, the Gauss--Bonnet
theorem implies that its Gaussian curvature vanishes somewhere.
After reversing orientation, \eqref{eq:weylK3} therefore gives a point
$p$ with $W_h^+(p)=0$. On the other hand, $W_h^+\not\equiv0$, since a
K3 surface admits no flat metric.
    
    \subsection{Noncompact case: the Gibbons--Hawking ansatz}
  In the late 1970s, Gibbons and Hawking developed an ansatz for
constructing gravitational instantons
\cite{GibbonsHawking,GibbonsHawking2}. Let $U\subset\mathbb R^3$ be an
open set with $H^2(U;\mathbb R)=0$, endowed with the Euclidean metric
$h_{\mathrm{Eucl}}$, and let $V$ be a positive harmonic function on
$U$. Since $*_{\mathbb R^3}dV$ is closed, there exists
$\omega\in\Omega^1(U)$ such that
\begin{equation}\label{eq:bogomolny}
d\omega=*_{\mathbb R^3}dV.
\end{equation}
Equivalently, $\omega$ satisfies the Bogomolny monopole equation
\[
\nabla^{\mathrm{Eucl}}\times\omega
=\nabla^{\mathrm{Eucl}}V.
\]
On $M=U\times\mathbb{S}^1$, we can define
    a Riemannian metric $h$ which can be written as
    \begin{equation} \label{eq:GibbHawkmetric}
        h=V^{-1}(d\tau+\omega)^2+Vh_{\mathrm{Eucl}},
    \end{equation}
    where $\tau$ is a coordinate along 
    $\mathbb{S}^1$. One has that 
    $h$ is hyperk\"{a}hler (see e.g. \cite[Section 4.1]{LiSun}).
    Hence, with the orientation induced by the hyperk\"{a}hler structure, $h$ is a Ricci-flat, anti-self-dual metric on $M$. 
    
  For simplicity, write $x^1:=x$, $x^2:=y$, and $x^3:=z$. We reverse the orientation on $M$ and choose the
    local orthonormal coframe given by 
    \[
    e^0:=V^{-1/2}(d\tau+\omega), \quad e^i=V^{1/2}dx^i, \mbox{ } i=1,2,3.
    \]
    We recall that $\{dx^i\}$ is a local orthonormal coframe on $\mathbb{R}^3$, with respect to which the Hodge operator on $\mathbb{R}^3$ can be 
    locally expressed as 
    \[
    *_{\mathbb{R}^3}(\eta)=\dfrac{1}{2}\eta_k\varepsilon_{ijk}dx^i\wedge dx^j,
    \]
    for every 1-form $\eta=\eta_idx^i$, where 
    $\varepsilon_{ijk}$ is the Levi-Civita symbol.
    
    We can find the associated Levi-Civita connection forms
    $\{\omega_j^i\}$ by means of Cartan's first structure equations
    \[
    de^\alpha=-\omega_{\beta}^{\alpha}\wedge e^{\beta}:
    \]
    by \eqref{eq:bogomolny}, a straightforward computation yields
    (see also \cite[Lemma 2.2]{LotayOliveira})
    \begin{align} \label{eq:conn-forms-GH}
        \omega_i^0&=-\dfrac{1}{2}V^{-3/2}\left(\partial_iVe^0+\varepsilon_{ijk}
        \partial_j Ve^k\right)\\
        \omega_j^i&=\dfrac{1}{2}V^{-3/2}
        \left(\partial_jVe^i-\partial_i Ve^j-\varepsilon_{ijk}\partial_kV e^0\right), \notag
    \end{align}   
    where $\partial_i V=\frac{\partial}{\partial x^i}V$. 
    It is immediate to observe that
    \begin{equation} \label{eq:relationconnformGH}
        \omega_j^i=\varepsilon_{ijk}\omega_k^0, 
    \end{equation}
    for every $i,j$. Now, we compute the curvature forms using Cartan's second structure equations
    \[
    \dfrac{1}{2}R_{\beta\mu\nu}^\alpha e^{\mu}\wedge e^{\nu}=\Omega_\beta^{\alpha}=d\omega_\beta^{\alpha}+
    \omega_\gamma^\alpha\wedge\omega_\beta^{\gamma}:
    \]
    by \eqref{eq:conn-forms-GH}, \eqref{eq:relationconnformGH} and the basic properties
    of the Levi-Civita symbol, it is immediate to show that
    \[
    \Omega_j^i=\varepsilon_{ijk}\Omega_k^0.
    \]
    Using the harmonicity of $V$ and the 
    properties of $*_{\mathbb{R}^3}$, we obtain
    \[
    \Omega_i^0=E_{ij}\left(e^0\wedge e^j + \dfrac{1}{2}\varepsilon_{jkl}e^k\wedge e^l\right),
    \]
    where 
    \begin{equation} \label{eq:riemannoperGH}
        E_{ij}:=\dfrac{1}{2}\left(V^{-2}\partial_i\partial_jV+
        V^{-3}\norm{\nabla V}^2\delta_{ij}-3V^{-3}\partial_iV\partial_jV\right). 
    \end{equation}
    By our choice of orientation, $(M,h)$ is now 
    self-dual and, since $h$ is Ricci-flat,
    the Riemann curvature operator coincides with $W_h^+$:
    choosing the standard orthogonal basis 
    $\{\eta_1,\eta_2,\eta_3\}$ of $\Lam^+$,
    where
    \begin{align*}
        \eta_j:=e^0\wedge e^j+\dfrac{1}{2}\varepsilon_{jkl}
        e^k\wedge e^l,
    \end{align*}
    we obtain 
    \[
    \Omega_i^0=E_{ij}\eta_j, \quad \Omega_j^i=\varepsilon_{ijk}E_{kl}\eta_l.
    \]
    Hence, by the definition of the self-dual Weyl operator in $\Lam^+_h$, we can immediately compute 
    \begin{equation*}
    W_h^+(\eta_i)=\Omega_i^0+\dfrac{1}{2}\varepsilon_{ijk}\Omega_k^j=
    \Omega_i^0+\dfrac{1}{2}\varepsilon_{ijk}\varepsilon_{jkl}\Omega_l^0=2\Omega_i^0=2
    E_{ij}\eta_j,
    \end{equation*}
    i.e., viewing $W_h^+$ as a matrix, 
    \begin{equation} \label{eq:weyloperGH}
        W_h^+=\dfrac{1}{V^3}\left[
        V\operatorname{Hess}_{\mathbb{R}^3}(V)+
        \norm{\nabla V}^2h_{\mathrm{Eucl}}-3dV\otimes dV\right].
    \end{equation}
  We now recall the multicentered Gibbons--Hawking construction. Fix
distinct points $p_1,\ldots,p_N\in\mathbb R^3$ and set
\[
U=\mathbb R^3\setminus\{p_1,\ldots,p_N\}.
\]
We can find a smooth, positive harmonic function $V$ in 
    $U$ defined as 
    \begin{equation} \label{eq:multi-center-potential}
        V(x):=\varepsilon+\sum_{\alpha=1}^N\dfrac{m_{\alpha}}{\norm{x-p_\alpha}},
    \end{equation}
    where $\varepsilon\geq 0$ and $m_{\alpha}$ is the mass (or charge)
    at the point $p_\alpha$. 

    The product $U\times\mathbb S^1$ is replaced by a principal
$U(1)$-bundle over $U$. Assume that all masses are equal to $m>0$ and
normalize $\tau$ to have period $4\pi m$. The Chern number over a small
sphere surrounding each $p_\alpha$ is then $\pm1$, so the restriction
of the bundle is the Hopf fibration
$\mathbb S^3\to\mathbb S^2$. After adding one point over each center,
where the circle collapses, the Gibbons--Hawking metric extends to a
smooth complete hyperk\"ahler metric; see
\cite[Section 2.3]{GrossWilson}, \cite[Example 2.4]{LotayOliveira},
and \cite[Section 4.1]{LiSun}.

Since the bundle is generally nontrivial, the expression
$d\tau+\omega$ in \eqref{eq:GibbHawkmetric} must be globally replaced
by a connection one-form $\Theta$ with curvature
$\pi^*(*_{\mathbb R^3}dV)$. However, locally, one can choose $\tau$
and $\omega$ so that \eqref{eq:GibbHawkmetric} and all the preceding
computations remain valid. If $\varepsilon>0$, $(M,h)$ becomes an ALF manifold (such as the multi-Taub-NUT space), while if
    $\varepsilon=0$, we obtain an ALE manifold. In both cases 
    $(M,h)$ is a gravitational instanton: as an example, 
    if $\varepsilon=0$ and $N=2$, we can recover the 
    Eguchi--Hanson metric on $T^*\mathbb{S}^2$
    \cite{EguchiHanson} (see e.g. \cite{Dunajski, LotayOliveira} for other examples). We highlight the fact that Anderson, Kronheimer and LeBrun generalized this construction to the case of countably many centers $p_\alpha$ \cite{AndKronLeBrun}.
    
    From now on, we set $N=4$ and assume that 
    $p_1,\ldots,p_4$ are vertices of a regular tetrahedron 
    in $\mathbb{R}^3$: namely, for a constant $R>0$, we define
\begin{align*}
p_1&:=\dfrac{R}{\sqrt3}(1,1,1),
& p_2&:=\dfrac{R}{\sqrt3}(1,-1,-1),\\
p_3&:=\dfrac{R}{\sqrt3}(-1,1,-1),
& p_4&:=\dfrac{R}{\sqrt3}(-1,-1,1),
\end{align*}
    and we assume that $m_\alpha=1$ for every $\alpha$, for the sake of simplicity. 
    One can easily observe that
    $\norm{p_\alpha}=R$ for every $\alpha$ and that,
    by definition of the $p_\alpha$'s, the following
    identities hold:
    \begin{equation} \label{eq:tethraident}
        \sum_{\alpha=1}^4p_{\alpha}=0, \quad 
        \sum_{\alpha=1}^4p_\alpha\otimes p_\alpha=\dfrac{4R^2}{3}I.
    \end{equation}
    First, we observe that there exists at least
    one point $p\in M$ such that $\norm{W_h^+}(p)=0$:
    indeed, we consider the barycenter $b=(0,0,0)$ of the tetrahedron and we can compute
    \begin{align*}
        \restr{\nabla\left(\dfrac{1}{\norm{p_\alpha-x}}\right)}{x=0}=
        \dfrac{p_\alpha}{R^3}, 
        \quad 
        \restr{\operatorname{Hess}_{\mathbb{R}^3}\left(
        \dfrac{1}{\norm{p_\alpha-x}}\right)}{x=0}=
        \dfrac{3p_\alpha\otimes p_\alpha-R^2I}{R^5}.
    \end{align*}
    By \eqref{eq:multi-center-potential} and \eqref{eq:tethraident}, the previous identities imply
    \begin{equation} \label{eq:barycenter-critical}
    \nabla V(0)=
    \operatorname{Hess}_{\mathbb{R}^3}V(0)=
    0,
    \end{equation}
    which immediately shows that $W_h^+= 0$ for every point $p\in\pi^{-1}(b)$, 
    by \eqref{eq:weyloperGH}. 

    Now, let 
    \[
    r:=\{t\nu: t\in\mathbb{R}\}, \quad 
    \nu:=\dfrac{1}{\sqrt{3}}(1,1,1)
    \]
    be the line through the barycenter $b$
    and $p_1$, and let $x=t\nu$ with $t<R$.
    Let $C_3=\{e, \phi, \phi^2\}$ be the cyclic group generated by $\phi$, the rotation of angle $2\pi/3$ around the axis $r$: it is clear that $r$ is exactly the fixed locus
    of every isometry in $C_3$.
 
    We choose a small, $C_3$-invariant open ball $B\subset U$
    centered at $b$ such that $\pi^{-1}(B)\cong B\times \mathbb{S}^1$ and $p_\alpha\not\in B$ for every $\alpha$. Since $V$ is $C_3$-invariant and the action preserves the Euclidean orientation, $*_{\mathbb{R}^3}dV$ is also $C_3$-invariant. It is important to observe that \eqref{eq:bogomolny} does not admit a unique 
    solution $\omega$: by defining 
    \[
    \omega_{\mathrm{inv}}
    :=\frac13\left(\omega+\phi^*\omega+(\phi^2)^*\omega\right),
    \]
    we obtain $d\omega_{\mathrm{inv}}=*_{\mathbb{R}^3}dV$ and that $\omega_{\mathrm{inv}}$ is $C_3$-invariant. 
    Renaming this new form $\omega$, we can extend $\phi$ to a local isometry $\widetilde{\phi}$ of the total space of the $U(1)$-bundle over $B$, i.e. we can define 
    \begin{equation} \label{eq:local-isometry-GH}
    \widetilde{\phi}(x,\tau)=(\phi(x),\tau),
    \end{equation}
    which obviously generates a cyclic group of local isometries of
    the total space, which we call again $C_3$. 

Now, we fix $p\in\pi^{-1}(r\cap B)$: since $\widetilde\phi(p)=p$, its
differential induces an isometry $\Phi$ of $T_p^*M$. Labeling the
coordinates cyclically, its action is
\begin{align*}
\Phi(e^0)&=e^0,
&\Phi(e^1)&=e^2,
&\Phi(e^2)&=e^3,
&\Phi(e^3)&=e^1,\\
\Phi^2(e^0)&=e^0,
&\Phi^2(e^1)&=e^3,
&\Phi^2(e^2)&=e^1,
&\Phi^2(e^3)&=e^2:
\end{align*}
we point out that the form $e^0$ is fixed by all isometries since both $V$ and $\omega$ are
$C_3$-invariant. Therefore, the natural action on $\Lambda_p^+$ induced by $\Phi$ satisfies
\begin{align*}
\Phi(\eta_1)&=\eta_2,
&\Phi(\eta_2)&=\eta_3,
&\Phi(\eta_3)&=\eta_1,\\
\Phi^2(\eta_1)&=\eta_3,
&\Phi^2(\eta_2)&=\eta_1,
&\Phi^2(\eta_3)&=\eta_2:
\end{align*}
hence, $C_3'=\{\Id,\Phi,\Phi^2\}$ acts by orientation-preserving
isometries on $(\Lambda_h^+)_p$.

    We can visualize $\Phi$ and $\Phi^2$ using their representative
    matrices with respect to the basis $\{\eta_1,\eta_2,\eta_3\}$ of $(\Lam^+)_p$ as 
    \[
    \Phi \longleftrightarrow \begin{pmatrix}
        0 & 0 & 1\\
        1 & 0 & 0\\
        0 & 1 & 0
    \end{pmatrix}, \quad 
    \Phi^2\longleftrightarrow \begin{pmatrix}
        0 & 1 & 0\\
        0 & 0 & 1\\
        1 & 0 & 0
    \end{pmatrix}.
    \]
    Now, since the elements of $C_3'$ are isometries, the self-dual
    Weyl operator $W_h^+$ commutes with all of them: 
    since $W_h^+$ is symmetric and trace-free, it is easy to see that, for
    some $\lambda\in\mathbb{R}$,
    \begin{equation} \label{eq:weyl-oper-GH-axis}
    W_h^+=\begin{pmatrix}
        0 & \lambda & \lambda\\
        \lambda & 0 & \lambda\\
        \lambda & \lambda & 0
    \end{pmatrix}
    \end{equation}
    at $p$. Its characteristic polynomial is
\[
\det(W_h^+-s\Id)=-(s-2\lambda)(s+\lambda)^2.
\]
Thus, its eigenvalues are $2\lambda,-\lambda,-\lambda$. In particular,
at every point of $\pi^{-1}(r\cap B)$ where $W_h^+$ is nonzero, it has
exactly two distinct eigenvalues. Our aim is to show that the ratio $\rho=\beta_h/\alpha_h$ 
    does not admit a continuous extension on $\pi^{-1}(b)$. In order to do so, we exploit the conclusions drawn \emph{via} the properties of the $C_3$-symmetry and 
    consider the expansion of $V$ around $b$: indeed,
    we will show that $W_h^+$ does not admit other zeros on the fibers of points on the axis $t\nu$ close to
    $b$, and therefore, we prove that, for $t\to 0$, the limit of $\rho(t):=\rho(t\nu)$ does not exist.
    First, we can easily compute the following
    \[
    \restr{\nabla^3\left(\dfrac{1}{\norm{p_\alpha-x}}\right)}{x=0}=
    \dfrac{3\left(5p_\alpha\otimes p_\alpha\otimes p_\alpha-
    3R^2 h_{\mathrm{Eucl}}\odot p_\alpha^{\flat}\right)}{R^7},
    \]
    where $\nabla^3$ denotes the third covariant derivative with respect to $h_{\mathrm{Eucl}}$ and $h_{\mathrm{Eucl}}\odot p_\alpha^{\flat}$ denotes the symmetric part
    of $h_{\mathrm{Eucl}}\otimes p_\alpha^{\flat}$; then, 
    by definition of the $p_\alpha$'s, we can compute,
    for every $x=(x_1,x_2,x_3)\in\mathbb{R}^3$,
    \begin{align} \label{eq:tetraidentcubic}
    \sum_{\alpha=1}^4\langle p_\alpha,x\rangle^3=&
    \dfrac{R^3}{3\sqrt{3}}\left[(x_1+x_2+x_3)^3+(x_1-x_2-x_3)^3\right.\notag \\
    &+\left.(-x_1+x_2-x_3)^3+(-x_1-x_2+x_3)^3\right]=\dfrac{8R^3}{\sqrt{3}}x_1x_2x_3. 
    \end{align}
    We consider
    the following expansion of $1/\norm{p_\alpha-x}$ 
    centered at $b$:
    \begin{align*}
    \dfrac{1}{\norm{p_\alpha-x}}=&
    \dfrac{1}{R}+\dfrac{\langle p_\alpha, x\rangle}{R^3}+
    \dfrac{3\langle p_\alpha,x\rangle^2-R^2\norm{x}^2}{2R^5}\\&+
    \dfrac{5\langle p_\alpha, x\rangle^3-3R^2\norm{x}^2\langle p_\alpha,x\rangle}{2R^7}+O(\norm{x}^4). 
    \end{align*}
    By \eqref{eq:multi-center-potential}, \eqref{eq:tethraident} and \eqref{eq:tetraidentcubic},
    since $V(0)=\varepsilon+4/R$, we obtain the following
    \[
    V(x)=\varepsilon+\dfrac{4}{R}+\dfrac{20}{\sqrt{3}R^4}x_1x_2x_3+
    O(\norm{x}^4) 
    \]
    around $b$. By \eqref{eq:weyloperGH}, it is clear
    that, for every $p\in\pi^{-1}(b')$, where $b'$ is close to $b$,
    \begin{equation} \label{eq:weyl-oper-GH-asymptotic}
    W_h^+=\dfrac{20}{\sqrt{3}R^4V(0)^2}
    \begin{pmatrix}
        0 & x_3 & x_2\\
        x_3 & 0 & x_1\\
        x_2 & x_1 & 0    
        \end{pmatrix}
        +O(\norm{x}^2).
    \end{equation}
    Indeed, the product of the first derivatives of $V$ is 
    $O(|x|^4)$, so the leading term in \eqref{eq:weyloperGH}
    becomes $V^{-2}\operatorname{Hess}_{\mathbb{R}^3}(V)$. 

    Now, we restrict ourselves to $x=t\nu$ for some $t$, which means that
    \[
    \lambda(t)=\dfrac{20}{3R^4V(0)^2}t+O(t^2)
    \]
    in \eqref{eq:weyl-oper-GH-axis}. Putting $c=20/(3R^4V(0)^2)$,
    the eigenvalues 
    of $W_h^+$ are 
    \[
    2ct+O(t^2), \quad -ct+O(t^2), \quad -ct+O(t^2):
    \]
    by our previous discussion about the action of $C_3'$, 
    the two equal eigenvalues have exactly the same error term
    for every $t$. Consequently,
\[
\rho(t)=
\begin{cases}
1, & t<0,\\[2mm]
-\dfrac12, & t>0,
\end{cases}
\quad\text{for }0<|t|\ll1.
\]
In particular, 
    \[
    \lim_{t\to 0^+} \rho(t)=-\dfrac{1}{2}, \quad
    \lim_{t\to 0^-}\rho(t)=1, 
    \]
    hence there is a degeneration phenomenon for $\rho$ on $\pi^{-1}(b)$:
    this means that there exists no constant $\rho_0<1$ such that
    $\beta_h\leq\rho_0\alpha_h$ in $M$. 

This shows that the conclusion of Theorem
\ref{thm:intro-extensions} can fail if the spectral hypothesis is
omitted. The metric $h$ is a nonflat gravitational instanton with an
ALE or ALF end and satisfies the volume-growth and curvature-decay
assumptions with $(\nu,q)=(4,4)$ in the ALE case and $(\nu,q)=(3,3)$
in the ALF case. Nevertheless, a K\"ahler metric of positive scalar
curvature satisfies $\det(W_g^+)>0$ everywhere
\cite{Derdzinski}. By conformal invariance, a K\"ahler representative
of this type in $[h]$ would imply $\det(W_h^+)>0$, contradicting the
vanishing of $W_h^+$ in $\pi^{-1}(b)$.

\begin{remark}
The examples above show that the spectral hypotheses in Theorems
\ref{thm:main} and \ref{thm:intro-extensions} are sharp. We have
not produced a compact example that exhibits the same loss of continuity
of $\rho$ near a zero of $W_h^+$ as in the Gibbons--Hawking example.
Despite the lack of explicit formulas for Calabi--Yau metrics on K3
surfaces, they may provide such examples.
\end{remark}


\begin{thebibliography}{99}
		
        \bibitem{AndKronLeBrun}
        M. T. Anderson, P. B. Kronheimer, C. LeBrun, \emph{Complete Ricci-flat K\"ahler manifolds of infinite topological type}, Comm. Math. Phys. {\bf 125} (1989), no. 4, 637--642.
		
		\bibitem{ArtebaniSarti}
		M. Artebani, A. Sarti,
		\emph{Non-symplectic automorphisms of order 3 on K3 surfaces},
		Math. Ann. \textbf{342} (2008), no. 4, 903--921.
		
		\bibitem{BandoKasueNakajima}
		S. Bando, A. Kasue, H. Nakajima,
		\emph{On a construction of coordinates at infinity on manifolds with fast
			curvature decay and maximal volume growth},
		Invent. Math. \textbf{97} (1989), no. 2, 313--349.

        \bibitem{Baer1999}
		C. B\"ar,
		\emph{Zero sets of solutions to semilinear elliptic systems of first order},
		Invent. Math. \textbf{138} (1999), no. 1, 183--202.
		
		\bibitem{Berger}
		M. Berger, Sur les vari\'et\'es d'Einstein compactes, in {\it Comptes Rendus de la IIIe R\'eunion du Groupement des Math\'ematiciens d'Expression Latine (Namur, 1965)}, pp. 35--55, Librairie Universitaire, Louvain.
		
		\bibitem{Besse}
		A. L. Besse, {\it Einstein manifolds}, reprint of the 1987 edition, 
		Classics in Mathematics, Springer, Berlin, 2008.
		
		\bibitem{BiquardGauduchon}
		O. Biquard, P. Gauduchon,
		\emph{On toric Hermitian ALF gravitational instantons},
		Comm. Math. Phys. \textbf{399} (2023), no. 1, 389--422.
		
		\bibitem{BiquardGauduchonLeBrun}
		O. Biquard, P. Gauduchon, and C. LeBrun,
		\emph{Gravitational instantons, Weyl curvature, and conformally K\"ahler geometry},
		Int. Math. Res. Not. IMRN \textbf{2024}, no. 20, 13295--13311.

        \bibitem{Bochner}
        S. Bochner, \emph{Curvature and Betti numbers. II}, Ann. of Math. (2) {\bf 50} (1949), 77--93.

        \bibitem{Bourguignon}
        J.-P. Bourguignon, Les vari\'et\'es de dimension $4$\ \`a{} signature non nulle dont la courbure est harmonique sont d'Einstein, Invent. Math. {\bf 63} (1981), no. 2, 263--286.
		
		\bibitem{CatinoMastroliaBook}
		G. Catino, P. Mastrolia, \emph{A perspective on canonical Riemannian metrics}, Progress in Mathematics, vol. 336, Birkh\"auser/Springer, Cham, 2020.
		
        \bibitem{CheegerTian}
		J. Cheeger, G. Tian,
		\emph{Curvature and injectivity radius estimates for Einstein 4-manifolds},
		J. Amer. Math. Soc. \textbf{19} (2006), no. 2, 487--525.

		\bibitem{ChenChenI}
		G. Chen, X. Chen,
		\emph{Gravitational instantons with faster than quadratic curvature decay (I)},
		Acta Math. \textbf{227} (2021), no. 2, 263--307.

        \bibitem{ChenLeBrunWeber}
		X. X. Chen, C. LeBrun, B. Weber,
		\emph{On conformally K\"ahler, Einstein manifolds},
		J. Amer. Math. Soc. \textbf{21} (2008), no. 4, 1137--1168.

        \bibitem{ChenTeo}
        Y. Chen, E. Teo, \emph{A new AF gravitational instanton}, Phys. Lett. B {\bf 703} (2011), no.~3, 359--362.
		
		\bibitem{Derdzinski}
		A. Derdzi\'nski,
		\emph{Self-dual K\"ahler manifolds and Einstein manifolds of dimension four},
		Compositio Math. \textbf{49} (1983), no. 3, 405--433.

        \bibitem{Dunajski}
        M. Dunajski, \emph{Gravitational instantons, old and new}, Acta Phys. Polon. B {\bf 55} (2024), no. 12, Paper No. A3, 18 pp.
		
		\bibitem{EguchiHanson}
		T. Eguchi, A. J. Hanson,
		\emph{Self-dual solutions to Euclidean gravity},
		Ann. Physics \textbf{120} (1979), no. 1, 82--106.

        \bibitem{EvansGariepy}
		L. C. Evans, R. F. Gariepy,
		\emph{Measure theory and fine properties of functions},
		revised ed., Textbooks in Mathematics, CRC Press, Boca Raton, FL, 2015.

        \bibitem{FriedrichKurke}
        T. Friedrich, H. Kurke, \emph{Compact four-dimensional self-dual Einstein manifolds with positive scalar curvature}, Math. Nachr. {\bf 106} (1982), 271--299.

        \bibitem{GibbonsHawking77}
        G. W. Gibbons, S. W. Hawking, \emph{Action integrals and partition functions in quantum gravity}, Phys. Rev. D {\bf 15} (1977), 2752--2756.
        
         \bibitem{GibbonsHawking}
        G. W. Gibbons, S. W. Hawking, \emph{Gravitational multi-instantons}, Phys. Lett. B {\bf 78} (1978), no. 4, 430--432.

        \bibitem{GibbonsHawking2}
        G. W. Gibbons, S. W. Hawking, \emph{Classification of gravitational instanton symmetries}, Comm. Math. Phys. {\bf 66} (1979), no. 3, 291--310.

        \bibitem{GrossWilson}
        M. Gross, P. M. H. Wilson, \emph{Large complex structure limits of $K3$ surfaces}, J. Differential Geom. {\bf 55} (2000), no. 3, 475--546.
		
		\bibitem{Guan2017}
		D. Guan,
		\emph{On bisectional nonpositively curved compact K\"ahler--Einstein surfaces},
		Pacific J. Math. \textbf{288} (2017), no. 2, 343--353.

        \bibitem{GuenanciaHamenstadt}
		H. Guenancia, U. Hamenst\"adt,
		\emph{K\"ahler--Einstein metrics of negative curvature},
		preprint, arXiv:2503.02838v3, 2026.

        \bibitem{Hitchin}
		N. J. Hitchin,
		\emph{Compact four-dimensional Einstein manifolds},
		J. Differential Geom. \textbf{9} (1974), 435--441.

        \bibitem{HitchinKahler}
        N. J. Hitchin, \emph{K\"ahlerian twistor spaces}, Proc. London Math. Soc. (3) {\bf 43} (1981), no. 1, 133--150.

        \bibitem{Kobayashi_isom}
        S. Kobayashi, \emph{Fixed points of isometries}, Nagoya Math. J. {\bf 13} (1958), 63--68.

        \bibitem{KobayashiNomizu}
        S. Kobayashi, K. Nomizu, {\it Foundations of differential geometry. Vol. II}, reprint of the 1969 original, Wiley Classics Library, Wiley-Interscience, Wiley, New York, 1996. 

        \bibitem{KronheimerALE}
		P. B. Kronheimer,
		\emph{The construction of ALE spaces as hyper-K\"ahler quotients},
		J. Differential Geom. \textbf{29} (1989), no. 3, 665--683.
		
		\bibitem{KronheimerTorelli}
		P. B. Kronheimer,
		\emph{A Torelli-type theorem for gravitational instantons},
		J. Differential Geom. \textbf{29} (1989), no. 3, 685--697.
		
		\bibitem{LeBrun2012}
		C. LeBrun,
		\emph{On Einstein, Hermitian 4-manifolds},
		J. Differential Geom. \textbf{90} (2012), no. 2, 277--302.
		
		\bibitem{LeBrun2013}
		C. LeBrun,
		\emph{Einstein manifolds and extremal K\"ahler metrics},
		J. Reine Angew. Math. \textbf{678} (2013), 69--94.
		
		\bibitem{LeBrun2015}
		C. LeBrun, \emph{Einstein metrics, harmonic forms, and symplectic four-manifolds}, Ann. Global Anal. Geom. {\bf 48} (2015), no. 1, 75--85.
		
		\bibitem{LeBrun2021}
		C. LeBrun,
		\emph{Einstein manifolds, self-dual Weyl curvature, and conformally K\"ahler geometry},
		Math. Res. Lett. \textbf{28} (2021), no. 1, 127--144.
		
		\bibitem{LiALE}
		M. Li,
		\emph{On 4-dimensional Ricci-flat ALE manifolds},
		preprint, arXiv:2304.01609, 2023.
		
		\bibitem{LiGravInstantons}
		M. Li,
		\emph{Classification results for conformally K\"ahler gravitational instantons},
		preprint, arXiv:2310.13197, 2023.

        \bibitem{LiSun}
        M. Li, S. Sun,
        \emph{Geometry of gravitational instantons},
        to appear in \emph{Surveys in Differential Geometry},
        arXiv:2606.10443, 2026.

        \bibitem{Lichnerowicz}
        A. Lichnerowicz, \emph{Spineurs harmoniques}, C. R. Acad. Sci. Paris {\bf 257} (1963), 7--9.

        \bibitem{LotayOliveira}
        J. D. Lotay, G. Oliveira, \emph{Special Lagrangians, Lagrangian mean curvature flow and the Gibbons-Hawking ansatz}, J. Differential Geom. {\bf 126} (2024), no. 3, 1121--1184.
		
		\bibitem{LyeK3}
		J. O. Lye,
		\emph{Geodesics on a K3 surface near the orbifold limit},
		Ann. Global Anal. Geom. \textbf{63} (2023), Paper No. 20.

        \bibitem{MatsumotoTanno}
        M. Matsumoto, S. Tanno, \emph{K\"ahlerian spaces with parallel or vanishing Bochner curvature tensor}, Tensor (N.S.) {\bf 27} (1973), 291--294.
		
		\bibitem{Minerbe}
		V. Minerbe,
		\emph{On the asymptotic geometry of gravitational instantons},
		Ann. Sci. \`Ecole Norm. Sup. (4) \textbf{43} (2010), no. 6, 883--924.
		
		\bibitem{OdakaSpottiSun}
		Y. Odaka, C. Spotti, S. Sun,
		\emph{Compact moduli spaces of del Pezzo surfaces and K\"ahler--Einstein metrics},
		J. Differential Geom. \textbf{102} (2016), no. 1, 127--172.

        \bibitem{Page-taub}
        D. N. Page, \emph{Taub-NUT instanton with an horizon}, Phys. Lett. B {\bf 78} (1978), no. 2, 249--251.
        
		\bibitem{Page}
		D. N. Page,
		\emph{A compact rotating gravitational instanton},
		Phys. Lett. B \textbf{79} (1979), 235--238.
		
		\bibitem{PenroseRindler2}
		R. Penrose, W. Rindler, \emph{Spinors and space-time. Vol. 2}, Cambridge Monographs on Mathematical Physics, Cambridge University Press, Cambridge, 1986.
		
        \bibitem{Polombo}
        A. Polombo,
        \emph{De nouvelles formules de Weitzenb\"ock pour des endomorphismes
        harmoniques. Applications g\'eom\'etriques},
        Ann. Sci. \`Ecole Norm. Sup. (4) \textbf{25} (1992), no. 4, 393--428.

        \bibitem{RossThomas}
        J. Ross, R. P. Thomas, {\it Weighted projective embeddings, stability of orbifolds, and constant scalar curvature K\"ahler metrics}, J. Differential Geom. {\bf 88} (2011), no. 1, 109--159.
		
		\bibitem{SiuYang}
		Y.-T. Siu, P. Yang,
		\emph{Compact K\"ahler--Einstein surfaces of nonpositive bisectional curvature},
		Invent. Math. \textbf{64} (1981), no. 3, 471--487.

        \bibitem{SunZhang}
		S. Sun, R. Zhang,
		\emph{Collapsing geometry of hyperk\"ahler four-manifolds and applications},
		Acta Math. \textbf{232} (2024), 325--424.

        \bibitem{Tachibana}
        S. Tachibana, \emph{On the Bochner curvature tensor}, Natur. Sci. Rep. Ochanomizu Univ. {\bf 18} (1967), 15--19.
		
		\bibitem{Tian}
		G. Tian,
		\emph{On Calabi's conjecture for complex surfaces with positive first Chern class},
		Invent. Math. \textbf{101} (1990), no. 1, 101--172.

        \bibitem{TricerriVanhecke}
        F. Tricerri, L. Vanhecke, \emph{Curvature tensors on almost Hermitian manifolds}, Trans. Amer. Math. Soc. {\bf 267} (1981), no. 2, 365--397. 
		
		\bibitem{Wu}
		P. Wu,
		\emph{Einstein four-manifolds with self-dual Weyl curvature of nonnegative determinant},
		Int. Math. Res. Not. IMRN \textbf{2021}, no. 2, 1043--1054.
		
		\bibitem{Yau}
		S.-T. Yau,
		\emph{On the Ricci curvature of a compact K\"ahler manifold and the complex
			Monge--Amp\`ere equation. I},
		Comm. Pure Appl. Math. \textbf{31} (1978), no. 3, 339--411.
		
	\end{thebibliography}
\end{document}